\newtheorem{theorem}{Theorem}[section]
\newtheorem{lemma}[theorem]{Lemma}
\newtheorem{prop}[theorem]{Proposition}
\newtheorem{cor}[theorem]{Corollary}
\theoremstyle{definition}
\newtheorem{con}[theorem]{Conjecture}
\theoremstyle{remark}
\newtheorem{remark}[theorem]{Remark}
\numberwithin{equation}{section}
\let \la=\lambda
\let \e=\varepsilon
\let \d=\delta
\let \o=\omega
\let \a=\alpha
\let \f=\varphi
\let \b=\beta
\let \O=\Omega
\let \si=\sigma
\let \ga=\gamma
\begin{document}
\title[On two weight estimates]
{On two weight estimates for iterated commutators}

\author[A. K. Lerner]{Andrei K. Lerner}

\address[A. K. Lerner]{Department of Mathematics,
Bar-Ilan University, 5290002 Ramat Gan, Israel}
\email{lernera@math.biu.ac.il}

\thanks{The first author was supported by ISF grant No. 447/16 and ERC Starting Grant No. 713927.
The second and third authors were supported by CONICET PIP 11220130100329CO and  ANPCyT PICT 2018-02501.}

\author[S. Ombrosi]{Sheldy Ombrosi}
\address[S. Ombrosi]{Departamento de Matem\'atica\\
Universidad Nacional del Sur\\
Bah\'ia Blanca, 8000, Argentina}\email{sombrosi@uns.edu.ar}

\author[I. P. Rivera-R\'{\i}os]{Israel P. Rivera-R\'{\i}os}
\address[I. P. Rivera-R\'{\i}os]{Departamento de Matem\'atica\\
Universidad Nacional del Sur\\
Bah\'ia Blanca, 8000, Argentina}
\email{israel.rivera@uns.edu.ar}

\begin{abstract}
In this paper we extend the bump conjecture and
a particular case of the separated bump conjecture with logarithmic bumps to iterated commutators~$T_b^m$.
Our results are new even for the first order commutator $T_b^1$. A new bump type necessary condition for the two-weighted boundedness of $T_b^m$
is obtained as well. We also provide some results related to a converse to Bloom's theorem.
\end{abstract}

\keywords{Commutators, Calder\'on-Zygmund operators, Sparse operators, weighted inequalities.}

\subjclass[2010]{42B20, 42B25}

\maketitle

\section{Introduction}
Let $T$ be a Calder\'on-Zygmund operator, and let $b$ be a locally integrable function on ${\mathbb R}^n$. The commutator $[b,T]$ of $T$ and $b$ is
defined by
$$[b,T]f(x)=b(x)T(f)(x)-T(bf)(x).$$
The iterated commutators $T_b^m, m\in {\mathbb N},$ are defined inductively by
$$T_b^mf=[b,T_b^{m-1}]f,\quad T_b^1f=[b,T]f.$$

By a weight we mean a non-negative, locally integrable function. In this paper we study two weight estimates
$$\int_{{\mathbb R}^n}|T_b^mf|^pu\lesssim \int_{{\mathbb R}^n}|f|^pv\quad(p>1)$$
with emphasis on $A_p$ type conditions on a couple of weights $(u,v)$.

This subject has a long history, and in our brief exposition below we mention only several papers of specific interest to us.
Consider first a Calder\'on-Zygmund operator $T$ (which formally can be regarded as $T_b^0$). In the one-weighted case when $u=v=w$,
the $A_p$ condition,
$$\sup_Q\left(\frac{1}{|Q|}\int_Qw\right)\left(\frac{1}{|Q|}\int_Qw^{-\frac{1}{p-1}}\right)^{p-1}<\infty,$$
is the sufficient (and for a subclass of non-degenerate Calder\'on-Zygmund operators is also necessary) condition for $T$ to be bounded on $L^p(w)$ (see, e.g., \cite{S}).

It is well known that the two weight problem is much more complicated, and, in particular, the $A_p$ condition for a couple $(u,v)$,
\begin{equation}\label{apcoup}
\sup_Q\left(\frac{1}{|Q|}\int_Qu\right)\left(\frac{1}{|Q|}\int_Qv^{-\frac{1}{p-1}}\right)^{p-1}<\infty,
\end{equation}
is no longer sufficient for $T: L^p(v)\to L^p(u)$.

There was a great deal of effort to find slightly stronger bump conditions which are sufficient for $T: L^p(v)\to L^p(u)$.
To formulate such conditions, define the normalized Luxemburg norm (for a Young function $\f$) by
$$
\|f\|_{\f,Q}=\inf\Big\{\la>0:\frac{1}{|Q|}\int_Q\f(|f(y)|/\la)dy\le 1\Big\}.
$$
If $\f(t)=t^p\log^{\a}(e+t),\a\ge 0$, we will use the notation $\|f\|_{L^p(\log L)^{\a},Q}$.
Observe that in this notation, (\ref{apcoup}) can be written in the form
$$\sup_{Q}\|u^{1/p}\|_{L^p,Q}\|v^{-1/p}\|_{L^{p'},Q}<\infty,$$
where $\frac{1}{p}+\frac{1}{p'}=1$. The bump conditions strengthen this condition by replacing the $L^p$ norms by slightly larger Luxemburg norms.

We say that a Young function $A$ satisfies the $B_p$ condition if $\int_1^{\infty}\frac{A(t)}{t^p}\frac{dt}{t}<\infty$. Let $\bar A$ denote the Young
function complementary to $A$. The bump conjecture of D.~Cruz-Uribe and C. P\'erez (see \cite[p. 187]{CMP1}) says that if $A$ and $B$ are Young functions such that $\bar A\in B_{p'}$ and $\bar B\in B_p, p>1,$
and
\begin{equation}\label{bumcon}
\sup_{Q}\|u^{1/p}\|_{A,Q}\|v^{-1/p}\|_{B,Q}<\infty,
\end{equation}
then $T: L^p(v)\to L^p(u)$. The bump conjecture was solved positively in \cite{NRTV} for $p=2$ and in~\cite{L} for all $p>1$.

Observe that typical examples of $A$ and $B$ satisfying $\bar A\in B_{p'}$ and $\bar B\in B_p$ are
\begin{equation}\label{typex}
A(t)=t^p\log^{p-1+\d}(e+t)\quad\text{and}\quad B(t)=t^{p'}\log^{p'-1+\d}(e+t),
\end{equation}
where $\d>0$. Such functions are called the logarithmic bumps.

The separated bump conjecture (probably first formulated in \cite{CRV}) asserts that $T$ is bounded from $L^p(v)$ to $L^p(u)$ when (\ref{bumcon}) is replaced by
\begin{equation}\label{sepbc}
\sup_{Q}\|u^{1/p}\|_{L^p,Q}\|v^{-1/p}\|_{B,Q}<\infty\quad\text{and}\quad \sup_{Q}\|u^{1/p}\|_{A,Q}\|v^{-1/p}\|_{L^{p'},Q}<\infty.
\end{equation}
This conjecture is still open, in general. In \cite{CRV}, D. Cruz-Uribe, A. Reznikov and A.~Volberg established that in the particular case of $A$ and $B$ in (\ref{typex}), this conjecture is true
(see also \cite{HP} for a different proof of this result).

We also mention the works of M. Lacey \cite{La} and K. Li \cite{Li} where some different variants of the separated bump conjecture were obtained
(which in the particular case of the logarithmic bumps in (\ref{typex}) provide yet another approach to the result in \cite{CRV}). In Section 4.2 below a more detailed exposition of the results in \cite{La,Li} is given.

Turn now to the commutators $T_b^m$. Our first goal is to extend the bump conjectures to $T_b^m$.
It was shown in our previous works \cite{LOR1, LOR2} (we recall the proof in Lemma \ref{dual1})
that $T_b^m$ (for $b\in BMO$) is controlled by the $(m+1)$-th iteration of $A_{\mathcal S}$, denoted by $A_{\mathcal S}^{m+1}$, where
$A_{\mathcal S}$ is the standard sparse operator defined by
$$A_{\mathcal S}f(x)=\sum_{Q\in {\mathcal S}}f_Q\chi_Q(x)\quad(f_Q=\frac{1}{|Q|}\int_Qf).$$
Observe that in the case $m=0$ this result is well known, see \cite{CR1,HRT,La1,L1,LN,LO}.

A domination of $T$ by $A_{\mathcal S}$ is the standard tool in most of the works dealing with the bump conjectures for $T$. Therefore, it is not surprising that in our attempt
of extending these results to $T_b^m$ we deal with $A_{\mathcal S}^{m+1}$. We will show (Lemma \ref{dual2}) that $A_{\mathcal S}^{m+1}$ is essentially equivalent to $T_m+T_m^*$, where  $T_m$ is a positive linear operator controlled by
$$A_{L(\log L)^m,{\mathcal S}}f(x)=\sum_{Q\in {\mathcal S}}\|f\|_{L(\log L)^m,Q}\chi_Q(x).$$
Thus, the operator $A_{L(\log L)^m,{\mathcal S}}$ is the key object in our analysis. The $L^p(v)\to L^p(u)$ bounds for $T_b^m$ follow from
the corresponding bounds for $A_{L(\log L)^m,{\mathcal S}}$ and their dual counterpart.

In what follows, it will be convenient to use the notation
$$[\la,\mu]_{A,B}=\sup_{Q}\|\la\|_{A,Q}\|\mu\|_{B,Q}.$$
Our extension of the bump conjecture to $T_b^m$ is the following.

\begin{theorem}\label{extbctbm}
Let ${\mathcal S}$ be a sparse family. Assume that $m\in {\mathbb Z}_+$ and $p>1$.
Let $\a_p$ be an arbitrary Young function such that $\bar \a_p\in B_{p'}$. Next, let $\b_{p,m}$ be an arbitrary Young function such that
$\b_{p,m}^{-1}(t)\f^{-1}(t)\simeq\frac{t}{\log^m(e+t)}$, where $\f\in B_p$.
Then
$$
\|A_{L(\log L)^m,{\mathcal S}}\|_{L^p(v)\to L^p(u)}\lesssim [u^{1/p},v^{-1/p}]_{\a_p,\b_{p,m}}.
$$
If $T$ is a Calder\'on-Zygmund operator with Dini-continuous kernel, then
\begin{equation}\label{tbmes}
\|T_b^m\|_{L^p(v)\to L^p(u)}\lesssim \|b\|_{BMO}^m(
[u^{1/p},v^{-1/p}]_{\a_p,\b_{p,m}}+[u^{1/p},v^{-1/p}]_{\b_{p',m},\a_{p'}}).
\end{equation}
\end{theorem}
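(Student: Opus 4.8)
The plan is to reduce the estimate for $T_b^m$ to the estimate for the sparse operator $A_{L(\log L)^m,\mathcal S}$ and its dual analogue, and then to prove the latter by a direct testing/duality argument in the spirit of the proof of the bump conjecture in \cite{L}. First I would establish the sparse-operator bound. Fix nonnegative $f\in L^p(v)$ and $g\in L^{p'}(u)$, normalized; by duality it suffices to bound $\sum_{Q\in\mathcal S}\|f\|_{L(\log L)^m,Q}\,|Q|\,g_Q$ (after the usual reduction replacing $f$ by $f v^{-1/p}\cdot v^{1/p}$ and $g$ by $g u^{-1/p}\cdot u^{1/p}$). The key point is the generalized H\"older inequality in the Luxemburg scale: since $\b_{p,m}^{-1}(t)\f^{-1}(t)\simeq t/\log^m(e+t)$ and $L(\log L)^m$ has inverse $\simeq t/\log^m(e+t)$ as a Young function in the relevant range, one gets
$$\|f v^{-1/p}\|_{L(\log L)^m,Q}\lesssim \|f v^{-1/p}\cdot v^{1/p}\|_{\f,Q}\,\|v^{-1/p}\|_{\b_{p,m},Q},$$
so that the sum is dominated by $[u^{1/p},v^{-1/p}]_{\a_p,\b_{p,m}}$ times $\sum_Q \|Fv^{1/p}\|_{\f,Q}\,\|G u^{1/p}\|_{\a_p,Q}\,|Q|$ with $F=fv^{-1/p}$, $G=gu^{-1/p}$. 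The remaining sum is handled exactly as in the unbumped sparse testing argument: one groups the cubes $Q$ according to the dyadic level of $\|Fv^{1/p}\|_{\f,Q}$ and $\|Gu^{1/p}\|_{\a_p,Q}$, uses the $B_p$ condition on $\f$ (equivalently $\bar\a_p\in B_{p'}$) to sum a geometric-type series via the maximal operators $M_\f$ and $M_{\a_p}$, whose boundedness on $L^p$ and $L^{p'}$ respectively is exactly what $\f\in B_p$ and $\bar\a_p\in B_{p'}$ guarantee (this is the Cruz-Uribe--P\'erez characterization). Sparseness gives the packing condition $\sum_{Q}|Q|\,\mathbf 1_{\{\text{level } j,k\}}\lesssim$ measure of the corresponding superlevel set, and H\"older closes the estimate.

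Once the sparse bound is in hand, the passage to $T_b^m$ is routine given the machinery recalled in the excerpt. By Lemma \ref{dual1}, for $b\in BMO$ the commutator $T_b^m$ is pointwise dominated (in the bilinear-form sense) by $\|b\|_{BMO}^m$ times a finite sum of operators $A_{\mathcal S}^{m+1}$ over sparse families $\mathcal S$. By Lemma \ref{dual2}, $A_{\mathcal S}^{m+1}$ is essentially equivalent to $T_m+T_m^*$ where $T_m$ is a positive linear operator controlled by $A_{L(\log L)^m,\mathcal S}$. Therefore
$$\|T_b^m\|_{L^p(v)\to L^p(u)}\lesssim\|b\|_{BMO}^m\big(\|A_{L(\log L)^m,\mathcal S}\|_{L^p(v)\to L^p(u)}+\|A_{L(\log L)^m,\mathcal S}^*\|_{L^p(v)\to L^p(u)}\big),$$
and the adjoint $A_{L(\log L)^m,\mathcal S}^*$ is, by a standard duality computation, bounded $L^p(v)\to L^p(u)$ precisely when $A_{L(\log L)^m,\mathcal S}$ is bounded $L^{p'}((u^{1-p'}))\to L^{p'}((v^{1-p'}))$, which by the first part of the theorem (applied with $p$ replaced by $p'$, $u\leftrightarrow v$ swapped appropriately) is controlled by $[u^{1/p},v^{-1/p}]_{\b_{p',m},\a_{p'}}$. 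Adding the two contributions yields \eqref{tbmes}.

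The main obstacle I expect is the generalized H\"older step and the bookkeeping that makes it compatible with the Cruz-Uribe--P\'erez summation: one must verify that the "leftover" Young function $\f$ obtained from factoring $L(\log L)^m$ against $\b_{p,m}$ really can be taken in $B_p$ (and symmetrically that $\a_p$ with $\bar\a_p\in B_{p'}$ is the right companion on the $u$-side), and that the three-fold H\"older inequality $\|FG\|_{L(\log L)^m,Q}\cdot(\text{weight norm})$ distributes correctly across the maximal functions $M_\f$, $M_{\a_p}$ whose $L^p$, $L^{p'}$ boundedness drives the geometric summation. The logarithmic loss of $L(\log L)^m$ is exactly absorbed by the $\log^m$ factor in the defining relation $\b_{p,m}^{-1}(t)\f^{-1}(t)\simeq t/\log^m(e+t)$, so the argument is structurally the same as the $m=0$ bump theorem in \cite{L}, but the careful matching of Young-function inverses in the presence of the extra logarithm is where the technical care is concentrated.
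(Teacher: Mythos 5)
Your proposal follows essentially the same route as the paper: reduce to $A_{L(\log L)^m,{\mathcal S}}$ and its dual counterpart via Lemmas \ref{dual1} and \ref{dual2} (packaged in Lemma \ref{Lem:Am+1LlogLm}), factor $\|f\|_{L(\log L)^m,Q}\lesssim\|fv^{1/p}\|_{\f,Q}\|v^{-1/p}\|_{\b_{p,m},Q}$ by the generalized H\"older inequality of Lemma \ref{holder}, and close the sum using sparseness together with the P\'erez $B_p$ theorem for $M_{\f}$ on $L^p$ and $M_{\bar\a_p}$ on $L^{p'}$. The only blemishes are notational (the maximal operator on the $g$-side is $M_{\bar\a_p}$, not $M_{\a_p}$, applied to $gu^{-1/p}$), and the ``main obstacle'' you flag --- whether the leftover function $\f$ can be taken in $B_p$ --- is moot, since $\f\in B_p$ is exactly a hypothesis of the theorem.
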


Observe that if $m=0$, then $\b_{p,0}=\a_{p'}$. Hence in this case the first and the second terms in (\ref{tbmes}) coincide, and we obtain the bump
conjecture for $T$ stated above.

In the case $m=1$, D. Cruz-Uribe and K. Moen \cite{CM} obtained different bump-type sufficient conditions. We give an example (in the Appendix) showing that the conditions in Theorem~\ref{extbctbm} provide
a wider class of weights $(u,v)$ for which $\|T_b^1\|_{L^p(v)\to L^p(u)}<\infty.$

The standard computations show that typical examples of $\a_p$ and $\b_{p,m}$ in Theorem~\ref{extbctbm} are
\begin{equation}\label{stco}
\a_p(t)=t^p\log^{p-1+\d}(e+t),\quad \b_{p,m}(t)=t^{p'}\log^{(m+1)p'-1+\d}(e+t).
\end{equation}

Our next result is an extension of the separated bump conjecture with logarithmic bumps to $T_b^m$. Here we fix $\a_p$ and $\b_{p,m}$ precisely as in (\ref{stco}).

\begin{theorem}\label{sepbumex} Let ${\mathcal S}$ be a sparse family. Assume that $m\in {\mathbb Z}_+$ and $p>1$. Take $\a_p$ and $\b_{p,m}$ as in (\ref{stco}). Define also
$$\ga_{p,m}(t)=t^{p'}\log^{m(p'+\d)}(e+t).$$
Then
$$
\|A_{L(\log L)^m,{\mathcal S}}\|_{L^p(v)\to L^p(u)}\lesssim [u^{1/p},v^{-1/p}]_{t^p,\b_{p,m}}+[u^{1/p},v^{-1/p}]_{\a_p,\ga_{p,m}}.
$$
If $T$ is a Calder\'on-Zygmund operator with Dini-continuous kernel, then
\begin{eqnarray}
\|T_b^m\|_{L^p(v)\to L^p(u)}&\lesssim& \|b\|_{BMO}^m\Big([u^{1/p},v^{-1/p}]_{t^p,\b_{p,m}}+[u^{1/p},v^{-1/p}]_{\a_p,\ga_{p,m}}\nonumber\\
&+& [u^{1/p},v^{-1/p}]_{\b_{p',m},t^{p'}}+[u^{1/p},v^{-1/p}]_{\ga_{p',m},\a_{p'}}\Big).\label{tbmnew}
\end{eqnarray}
\end{theorem}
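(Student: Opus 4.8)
The plan is to reduce the commutator bound to the stated estimate for the Orlicz sparse operator, and then to prove that estimate by a parallel stopping-time argument modelled on the known proofs of the separated bump conjecture with logarithmic bumps.

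\textbf{Reduction to the sparse operator.} Granting the first inequality of the theorem, the bound for $T_b^m$ follows from Lemmas~\ref{dual1} and~\ref{dual2} exactly as in Theorem~\ref{extbctbm}. By Lemma~\ref{dual1}, $T_b^m$ is controlled by $\|b\|_{BMO}^mA_{\mathcal S}^{m+1}$ (up to a finite number of sparse families), and by Lemma~\ref{dual2},
$$
\|A_{\mathcal S}^{m+1}\|_{L^p(v)\to L^p(u)}\lesssim\|T_m\|_{L^p(v)\to L^p(u)}+\|T_m^*\|_{L^p(v)\to L^p(u)},
$$
with $T_m$ a positive linear operator controlled by $A_{L(\log L)^m,{\mathcal S}}$. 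Applying the sparse estimate to $T_m$ yields the first two terms of (\ref{tbmnew}). Since $\|T_m^*\|_{L^p(v)\to L^p(u)}=\|T_m\|_{L^{p'}(u^{1-p'})\to L^{p'}(v^{1-p'})}$, applying the sparse estimate with $(p,u,v)$ replaced by $(p',v^{1-p'},u^{1-p'})$, using the identities $(v^{1-p'})^{1/p'}=v^{-1/p}$ and $(u^{1-p'})^{-1/p'}=u^{1/p}$ together with $[\la,\mu]_{A,B}=[\mu,\la]_{B,A}$, produces exactly the last two terms of (\ref{tbmnew}). Thus everything is reduced to the bound for $A_{L(\log L)^m,{\mathcal S}}$.

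\textbf{Duality reduction.} By the standard localization one may assume ${\mathcal S}$ is sparse inside a single dyadic grid. Put $\sigma=v^{1-p'}$. After the substitution $f\mapsto f\sigma$ and dualizing against $g\in L^{p'}(u)$, the first inequality of the theorem is equivalent to the bilinear estimate
$$
\sum_{Q\in{\mathcal S}}\|f\sigma\|_{L(\log L)^m,Q}\,\langle gu\rangle_Q\,|Q|\ \lesssim\ \big([u^{1/p},v^{-1/p}]_{t^p,\b_{p,m}}+[u^{1/p},v^{-1/p}]_{\a_p,\ga_{p,m}}\big)\,\|f\|_{L^p(\sigma)}\,\|g\|_{L^{p'}(u)},
$$
where $\langle gu\rangle_Q=\frac1{|Q|}\int_Qgu$.

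\textbf{Parallel stopping times.} Here I would follow the scheme of \cite{CRV} (and of the sparse-form arguments in \cite{La,Li}): introduce two principal-cube families, ${\mathcal F}$ from a Calder\'on--Zygmund stopping time for $f$ with respect to the measure $\sigma\,dx$ (using the appropriate $L(\log L)^m$-type average of $f$), and ${\mathcal G}$ from a stopping time for $g$ with respect to $u$. Split ${\mathcal S}={\mathcal S}_{\mathcal F}\sqcup{\mathcal S}_{\mathcal G}$ according to which of $\pi_{\mathcal F}(Q)$, $\pi_{\mathcal G}(Q)$ is the finer stopping ancestor of $Q$. On ${\mathcal S}_{\mathcal F}$ one controls $\langle gu\rangle_Q$ by its value on the coarser ${\mathcal G}$-ancestor and, via a generalized Orlicz--H\"older inequality splitting $\|f\sigma\|_{L(\log L)^m,Q}$ into $f$ measured against $\sigma$ and a bump of $\sigma$ --- the step that creates the extra $m$ logarithms and hence forces the exponent $(m+1)p'-1+\d$ in $\b_{p,m}$ --- one is left with a Carleson-type sum summable by $[u^{1/p},v^{-1/p}]_{t^p,\b_{p,m}}<\infty$, the sparseness of ${\mathcal F}$, and the fact that $\b_{p,m}$ lies in the relevant $B_{p'}$-type class (this is where the slack $\d>0$ enters). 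On ${\mathcal S}_{\mathcal G}$ the roles are interchanged: the $u$-side now carries the genuine bump $\a_p$, so $\bar\a_p\in B_{p'}$ gives maximal-function control of $g$ against $u$, while on the $v$-side only the smaller bump $\ga_{p,m}(t)=t^{p'}\log^{m(p'+\d)}(e+t)$ is needed, since the ${\mathcal F}$-stopping supplies the missing logarithms. Adding the two pieces gives the displayed bilinear bound.

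\textbf{Main obstacle.} The delicate point is the interaction of the Orlicz norm $\|f\sigma\|_{L(\log L)^m,Q}$ with the two corona structures: unlike ordinary averages it is not monotone along a tree of cubes, so one needs a self-improving (John--Str\"omberg-type, equivalently $M_{L(\log L)^m}\simeq M^{m+1}$) estimate to compare it with the stopping value on $\pi_{\mathcal F}(Q)$ while keeping the $m$ logarithmic factors under exact control, and one must verify that in both coronas the resulting series --- geometric decay from the stopping times against logarithmic growth from the bumps --- converges precisely because $\d>0$. Once this book-keeping is in place, the summations are routine Carleson-embedding computations.
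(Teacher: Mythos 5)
Your reduction of $T_b^m$ to $A_{L(\log L)^m,{\mathcal S}}$ (via Lemmas \ref{dual1} and \ref{dual2}, duality, and the symmetry $[\la,\mu]_{A,B}=[\mu,\la]_{B,A}$) is correct and is exactly how the paper proceeds (Lemma \ref{Lem:Am+1LlogLm}), as is the passage to the bilinear form in terms of $\si=v^{1-p'}$. The problem is the core sparse estimate: you only sketch a CRV-style parallel-corona argument, and the decisive quantitative step is missing. You yourself isolate it as the ``main obstacle'' and then declare the rest ``routine Carleson-embedding computations,'' but the entire content of the theorem is precisely how the $m$ logarithms in $\|f\si\|_{L(\log L)^m,Q}$ are distributed so as to produce the exponent $(m+1)p'-1+\d$ in $\b_{p,m}$ on one side and only $m(p'+\d)$ (together with the genuine bump $\a_p$ on $u$) on the other. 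Your one-line justification for the ${\mathcal S}_{\mathcal G}$ corona --- ``the ${\mathcal F}$-stopping supplies the missing logarithms'' --- is not an argument: on that corona the Orlicz norm of $f\si$ still carries its $m$ logarithms, and no mechanism is given for why they can be absorbed at the cost of only $\log^{m(p'+\d)}$ on the $\si$-side paired with $\log^{p-1+\d}$ on the $u$-side. Nothing in the proposal explains why the mixed condition $[\a_p,\ga_{p,m}]$ (rather than, say, a full $[\a_p,\b_{p,m}]$, which would be a weaker theorem) suffices.

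For comparison, the paper does not run the CRV corona scheme at all. It first splits, for a suitably chosen $1<r<p$,
$$
\|f\si\|_{L(\log L)^m,Q}\lesssim \Big(\tfrac{\|\si\|_{L(\log L)^{mr'},Q}}{\si_Q}\Big)^{1/r'}\si_Q\,\|f\|^{\si}_{L^r,Q},
$$
reducing matters to the positive dyadic operator with coefficients $\la_Q=(\|\si\|_{L(\log L)^{mr'},Q}/\si_Q)^{1/r'}$; then it invokes the Lacey--Sawyer--Uriarte-Tuero testing theorem (Theorem \ref{lsu}) and proves a new extension of Lacey's and Li's integrated conditions (Theorem \ref{estli}) that tolerates the extra factor $\la_Q\psi(\la_Q)$. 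The key structural point is that $\la_Q$, which carries the $m$ logarithms of $\si$, survives into \emph{both} testing conditions; this is exactly why the second condition couples a bump on $u$ with the partial bump $\ga_{p,m}$ on $\si$, and the precise exponents come from a delicate choice of parameters ($r$ with $\frac{mp+1}{m+1}<r<p$, then $s,s_0,q,\mu,\nu$ satisfying the constraints in Section \ref{subsec:Proofmcrv}). Your proposal contains no counterpart to Theorem \ref{estli} or to this parameter bookkeeping, so as it stands it does not prove the stated bound; whether a direct two-corona argument can be made to yield precisely $[t^p,\b_{p,m}]+[\a_p,\ga_{p,m}]$ for $m\ge 1$ is left entirely open by your sketch.
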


The proof of this result is much more involved than the proof of Theorem \ref{extbctbm}. It is based on some extension of the works \cite{La,Li}.

If $m=0$, then $\b_{p,0}=\a_{p'}$ and $\ga_{p,0}=t^{p'}$. In this case the second line in (\ref{tbmnew}) coincides with the first line and we obtain
the above mentioned separated bump conjecture with logarithmic bumps from \cite{CRV}. In the case $m\ge 1$ one can see that our estimates are not totally in
the spirit of the separated bums. Simple manipulations with terms involving $\ga_{p,m}$ in (\ref{tbmnew}) allow us to get the following.

\begin{cor}\label{corpc} For $m\in {\mathbb N}$ define
$$\psi_{p,m}(t)=t^{p'}\log^{\max((m+1)p'-1, mp'+1)+\e}(e+t).$$ Then, with $T$ as above,
$$\|T_b^m\|_{L^p(v)\to L^p(u)}\lesssim\|b\|_{BMO}^m\Big([u^{1/p},v^{-1/p}]_{t^p,\psi_{p,m}}+[u^{1/p},v^{-1/p}]_{\psi_{p',m},t^{p'}}\Big).$$
\end{cor}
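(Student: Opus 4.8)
The plan is to derive Corollary~\ref{corpc} directly from Theorem~\ref{sepbumex} by absorbing the ``hybrid'' bump terms involving $\ga_{p,m}$ into standard separated-bump terms. The point is that $\ga_{p,m}(t)=t^{p'}\log^{m(p'+\d)}(e+t)$ is a genuine $L^{p'}(\log L)^{m(p'+\d)}$-bump on the $v^{-1/p}$ side, paired against the full $\a_p$-bump on the $u^{1/p}$ side; we want to trade this for a pairing against $t^p$ (no bump) on the $u$ side at the cost of enlarging the power of the logarithm in the $v$-bump. I would first record the elementary pointwise/norm inequality that for Young functions with $\f(t)=t^p\log^{a}(e+t)$ and $\psi(t)=t^{p'}\log^{b}(e+t)$ one has, via the generalized Hölder inequality for Luxemburg norms (Lemma-level fact, $\|fg\|_{1,Q}\lesssim\|f\|_{\Phi,Q}\|g\|_{\Psi,Q}$ when $\Phi^{-1}\Psi^{-1}\lesssim C^{-1}(t)=t$), that $[u^{1/p},v^{-1/p}]_{\a_p,\ga_{p,m}}\lesssim[u^{1/p},v^{-1/p}]_{t^p,\Psi}$ whenever $\Psi$ dominates the ``product'' of $\a_p$ and the dual relationship forces $\Psi(t)=t^{p'}\log^{(p-1+\d)(p'-1)+m(p'+\d)+\e'}(e+t)$ up to constants; the exponent simplifies since $(p-1+\d)(p'-1)=p'-1+\d(p'-1)$, giving total log-power $p'-1+m p'+ m\d+\d'+\dots$, which for small parameters is $\le mp'+1+\e$ after choosing $\d,\d'$ small. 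Thus each $\ga$-term is absorbed into a $[u^{1/p},v^{-1/p}]_{t^p,\psi_{p,m}}$-type term with $\psi_{p,m}(t)=t^{p'}\log^{mp'+1+\e}(e+t)$, and by the dual computation the $\ga_{p',m}$-against-$\a_{p'}$ term is absorbed into $[u^{1/p},v^{-1/p}]_{\psi_{p',m},t^{p'}}$.

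Second, I would handle the remaining two terms of \eqref{tbmnew}: $[u^{1/p},v^{-1/p}]_{t^p,\b_{p,m}}$ with $\b_{p,m}(t)=t^{p'}\log^{(m+1)p'-1+\d}(e+t)$ is already of separated-bump form with no bump on the $u$-side, and similarly $[u^{1/p},v^{-1/p}]_{\b_{p',m},t^{p'}}$ is separated with no bump on the $v$-side. The only thing to check is that the log-exponent $(m+1)p'-1+\d$ appearing in $\b_{p,m}$ is $\le\max((m+1)p'-1,\,mp'+1)+\e$, which holds trivially by taking $\e\ge\d$; hence $\|f\|_{\b_{p,m},Q}\lesssim\|f\|_{\psi_{p,m},Q}$ for every $Q$ (monotonicity of Luxemburg norms in the Young function, since $\b_{p,m}(t)\le C\psi_{p,m}(t)$ for the stated parameter choice), and likewise on the dual side. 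So all four terms in \eqref{tbmnew} are bounded by $[u^{1/p},v^{-1/p}]_{t^p,\psi_{p,m}}+[u^{1/p},v^{-1/p}]_{\psi_{p',m},t^{p'}}$, and the corollary follows by applying Theorem~\ref{sepbumex} and then this comparison.

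The main obstacle is the bookkeeping in the first step: making sure the generalized Hölder inequality is applied to the correct triple of Young functions so that the product of an $\a_p$-bump and the complementary function of $\psi$ really does embed into $L^1$, and then verifying that the resulting logarithmic exponent genuinely lands below $\max((m+1)p'-1, mp'+1)+\e$ after optimizing over the free small parameters $\d$ (from \eqref{stco}) and the $\d$ inside $\ga_{p,m}$. One must be a little careful that the $\max$ is what emerges: the $\b_{p,m}$-term forces the exponent $(m+1)p'-1$, while the absorbed $\ga$-term forces roughly $mp'+1$ (the ``$+1$'' coming from the $p'-1$ contributed by the dual of the $\log^{p-1+\d}$ factor in $\a_p$, together with $mp'$ from $\ga_{p,m}$), so the final bump must dominate both, hence the maximum. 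I would also note explicitly that $\e>0$ is arbitrary and absorbs all the small $\d$'s, so no sharpness is claimed. The rest — sparse domination of $T_b^m$ by $A_{L(\log L)^m,\mathcal S}$ and its dual, and the passage from the sparse bound to the operator bound — is already provided by Theorem~\ref{sepbumex} and is invoked as a black box.
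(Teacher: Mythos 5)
Your overall strategy (deduce the corollary from Theorem \ref{sepbumex} by dominating all four bump constants in (\ref{tbmnew}) by the two constants involving $\psi_{p,m}$ and $\psi_{p',m}$, and your identification of where the two entries of the $\max$ come from) is the paper's, and your treatment of the $\b_{p,m}$- and $\b_{p',m}$-terms by monotonicity of Luxemburg norms is correct. The gap is in the absorption of the hybrid terms. You claim that $[u^{1/p},v^{-1/p}]_{\a_p,\ga_{p,m}}\lesssim[u^{1/p},v^{-1/p}]_{t^p,\Psi}$ for a \emph{single} Young function $\Psi$, justified by the generalized H\"older inequality $\|fg\|_{C,Q}\le 2\|f\|_{A,Q}\|g\|_{B,Q}$. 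That inequality compares norms of a product $fg$ with norms of its factors; it gives no mechanism for transferring a logarithmic bump from the norm of $u^{1/p}$ to the norm of the unrelated function $v^{-1/p}$. A separated condition with no bump on the $u$-side carries strictly less local information about $u$ than $\|u^{1/p}\|_{\a_p,Q}=\|u\|_{L(\log L)^{p-1+\d},Q}^{1/p}$ does, so a one-term absorption of this kind should not be expected (this is precisely the point of the separated-bump phenomenon). Your exponent bookkeeping also slips at this spot: $(p-1+\d)(p'-1)=1+\d(p'-1)$, not $p'-1+\d(p'-1)$, and the ``$+1$'' in $mp'+1$ does not come from a duality of the $\log^{p-1+\d}$ factor.

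What the paper actually does is interpolate each factor separately, using the inequality $\|f\|_{L(\log L)^{\a},Q}\lesssim (|f|_Q)^{\theta}\|f\|_{L(\log L)^{\a/(1-\theta)},Q}^{1-\theta}$ recorded in Section 2.3. Choosing the interpolation parameter $\a=\frac{mp}{mp+p-1}$ (so that $\frac{mp'}{\a}=mp'+1$ and $\frac{p-1}{1-\a}=(m+1)p-1$), and taking $\d$ small relative to $\e$, one gets
$$
\|u^{1/p}\|_{\a_p,Q}\|v^{-1/p}\|_{\ga_{p,m},Q}\lesssim \big(\|u^{1/p}\|_{L^p,Q}\|v^{-1/p}\|_{\psi_{p,m},Q}\big)^{\a}\,\big(\|u^{1/p}\|_{\psi_{p',m},Q}\|v^{-1/p}\|_{L^{p'},Q}\big)^{1-\a},
$$
so the hybrid term is controlled by the geometric mean of \emph{both} separated constants, hence by their sum --- not by either one alone. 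This is the step you need to replace; once it is in place, the rest of your argument (including the dual computation for $[u^{1/p},v^{-1/p}]_{\ga_{p',m},\a_{p'}}$ and the passage through Lemma \ref{Lem:Am+1LlogLm}) goes through as you describe.
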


We conjecture that the terms with $\ga_{p,m}$ in (\ref{tbmnew}) can be fully avoided.

\begin{con}\label{consepb} Let $T$ be as above. Then
$$\|T_b^m\|_{L^p(v)\to L^p(u)}\lesssim\|b\|_{BMO}^m\Big([u^{1/p},v^{-1/p}]_{t^p,\b_{p,m}}+[u^{1/p},v^{-1/p}]_{\b_{p',m},t^{p'}}\Big).$$
\end{con}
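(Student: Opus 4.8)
We sketch a strategy for Conjecture~\ref{consepb}, building on the proof of Theorem~\ref{sepbumex}.

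\emph{Reduction.} By Lemma~\ref{dual1} one has $|T_b^m f|\lesssim\|b\|_{BMO}^m A_{\mathcal S}^{m+1}(|f|)$ for a finite collection of sparse families, and by Lemma~\ref{dual2} we have $A_{\mathcal S}^{m+1}\simeq T_m+T_m^*$ with $T_m$ a positive operator dominated by $A_{L(\log L)^m,\mathcal S}$. Since $T_m^*$ acting $L^p(v)\to L^p(u)$ coincides, by duality of the weighted spaces, with $T_m$ acting $L^{p'}(u^{1-p'})\to L^{p'}(v^{1-p'})$, it suffices to prove the single sparse estimate
$$\|A_{L(\log L)^m,\mathcal S}\|_{L^p(v)\to L^p(u)}\lesssim [u^{1/p},v^{-1/p}]_{t^p,\b_{p,m}}+[u^{1/p},v^{-1/p}]_{\b_{p',m},t^{p'}}$$
for all $p>1$, and then to apply it once as stated and once with $(p,u,v)$ replaced by $(p',\,v^{1-p'},\,u^{1-p'})$; a direct computation shows the right-hand side is invariant under this substitution, so both $T_m$ and $T_m^*$ are controlled by it. Note that at $p=2$ this estimate is exactly Corollary~\ref{corpc}, hence already known; for $p\neq 2$ it sharpens that corollary by removing a factor $\log^{(2-p')_+}$ from the bump $\psi_{p,m}$ and a factor $\log^{(2-p)_+}$ from $\psi_{p',m}$ --- equivalently, it eliminates the auxiliary Young function $\ga_{p,m}$ from (\ref{tbmnew}).

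\emph{The corona.} For the sparse estimate I would run the parallel corona of \cite{La,Li} underlying Theorem~\ref{sepbumex}. Dualizing and making the usual substitutions, one reduces to a bilinear bound $\sum_{Q\in\mathcal S}\|fv^{-1/p}\|_{L(\log L)^m,Q}\langle gu^{1/p}\rangle_Q|Q|\lesssim\|f\|_{L^p}\|g\|_{L^{p'}}$, and introduces two stopping families inside $\mathcal S$: a family $\mathcal E$ built from the $L(\log L)^m$-averages of $fv^{-1/p}$ and a family $\mathcal F$ built from the $L^{p'}$-averages of $gu^{1/p}$. Splitting the sum according to the pair $(\pi_{\mathcal E}Q,\pi_{\mathcal F}Q)$ of minimal stopping cubes containing $Q$: on the blocks where the $\mathcal F$-cube contains the $\mathcal E$-cube the average of $gu^{1/p}$ is essentially frozen and one extracts $\|u^{1/p}\|_{\b_{p',m},Q}$ against $\|v^{-1/p}\|$ through the generalized H\"older inequality in the Orlicz scale, invoking $[u^{1/p},v^{-1/p}]_{\b_{p',m},t^{p'}}$; on the complementary blocks the average defining $\|fv^{-1/p}\|_{L(\log L)^m,Q}$ is frozen and one uses $[u^{1/p},v^{-1/p}]_{t^p,\b_{p,m}}$. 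The residual $f$- and $g$-factors are then summed via the Carleson property of $\mathcal S$ and weighted Carleson embeddings for $M$ and for the Orlicz maximal operator $M_{L(\log L)^m}f(x)=\sup_{Q\ni x}\|f\|_{L(\log L)^m,Q}$. The freezing is what defeats the standard obstruction that a single application of H\"older overspends one bump (the ``half-bump is not enough'' phenomenon).

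\emph{Main obstacle.} The decisive step, and the one I expect to be the main obstacle, is to control the Carleson packing constant of the stopping family $\mathcal E$ --- measured against the $L(\log L)^m$-averages --- by the single bump $\b_{p,m}$. In the proof of Theorem~\ref{sepbumex} the $\log^m$ discrepancy between $\langle fv^{-1/p}\rangle_Q$ and $\|fv^{-1/p}\|_{L(\log L)^m,Q}$ is treated separately from the $B_p$-summability of the resulting Carleson series, and reconciling them forces in the second Young function $\ga_{p,m}$; this double bookkeeping is precisely what produces the extra term in (\ref{tbmnew}). To close the gap one should run the stopping time directly on $M_{L(\log L)^m}(fv^{-1/p})$ and prove a bumped Orlicz--Carleson embedding
$$\sum_{Q\in\mathcal S,\,Q\subseteq R}\|fv^{-1/p}\|_{L(\log L)^m,Q}^{p}\,\a_Q\lesssim\int_R\big(M_{L(\log L)^m}(fv^{-1/p})\big)^{p}$$
in which the entire $\log^m$ loss is absorbed into $M_{L(\log L)^m}$ and the packing constant $\sup_{Q}|Q|^{-1}\sum_{Q'\in\mathcal S,\,Q'\subseteq Q}\a_{Q'}$ coming from the corona is bounded by $[u^{1/p},v^{-1/p}]_{t^p,\b_{p,m}}$ alone, with $\b_{p,m}$ then absorbed by $\f\in B_p$ via $\b_{p,m}^{-1}(t)\f^{-1}(t)\simeq t/\log^m(e+t)$. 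This is exactly where a power $\log^{(2-p')_+}$ is currently lost; the case $p=2$, where the loss vanishes, is already Corollary~\ref{corpc}, and the difficulty is concentrated in the regime $\min(p,p')<2$, where one of the two corona sums is below the $L^2$ threshold. An alternative route would be to recast the sparse bound as a pair of dual sparse testing conditions in the spirit of \cite{Li} and verify them directly from the two separated bumps, but the same $\log^m$-versus-summability tension reappears inside that verification.
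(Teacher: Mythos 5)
You are attempting to prove Conjecture \ref{consepb}, which the paper explicitly leaves \emph{open}: the authors establish only the weaker Theorem \ref{sepbumex} (with the extra $\ga_{p,m}$ terms) and its consequence Corollary \ref{corpc}, and they note that the conjecture is known only for $p=2$ (where it coincides with Corollary \ref{corpc}) and for $m=0$ (where it reduces to the separated bump result of \cite{CRV}). So there is no proof in the paper to compare against, and your proposal does not supply one either. Your reduction step is sound and matches the paper's machinery: Lemmata \ref{dual1}, \ref{dual2} and \ref{Lem:Am+1LlogLm} do reduce everything to a single sparse bound for $A_{L(\log L)^m,{\mathcal S}}$, and the right-hand side of the conjectured estimate is indeed invariant under $(p,u,v)\mapsto(p',v^{1-p'},u^{1-p'})$, the two terms simply swapping.

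The genuine gap is the ``bumped Orlicz--Carleson embedding'' of your last paragraph: you assert that the packing constant of the stopping family built from the $L(\log L)^m$-averages can be controlled by $[u^{1/p},v^{-1/p}]_{t^p,\b_{p,m}}$ alone, but you give no argument, and this assertion is the entire content of the conjecture. In the paper's proof of Theorem \ref{sepbumex}, the obstruction appears concretely in the verification of the second testing condition (\ref{cht2}): the factor $\la_Q\rho(\la_Q)$, with $\la_Q=(\|\si\|_{L(\log L)^{mr'},Q}/\si_Q)^{1/r'}$, combines with $(\si_Q)^{1/p'}$ to produce $\|\si\|_{L(\log L)^{ms'},Q}^{1/p'}$ with $s'$ close to $p'$, i.e.\ an $m(p'+\d)$-logarithmic bump on $v^{-1/p}$ that is not covered by the first hypothesis and must instead be paired with the bumped average $\|u\|_{L(\log L)^{(1+2\mu)(p-1)},Q}^{1/p}$ --- this is exactly how the term $[u^{1/p},v^{-1/p}]_{\a_p,\ga_{p,m}}$ is forced in. Your sketch does not explain how running the stopping time on $M_{L(\log L)^m}(fv^{-1/p})$ escapes this: the discrepancy between $\si_Q$ and $\|\si\|_{L(\log L)^{mr'},Q}$ reappears in whichever testing condition carries the unbumped weight, and naming this tension as ``the main obstacle'' does not resolve it. As written, the proposal is a plausible research program, not a proof.
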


It is interesting that Corollary \ref{corpc} coincides with Conjecture \ref{consepb} in the case $p=2$ but for every $p\not=2$, Conjecture \ref{consepb} provides a better result.

Turn to a necessary condition for the two-weighted boundedness of $T_b^m$.

\begin{theorem}\label{neccond}
Let $T$ be a non-degenerate Calder\'on-Zygmund operator. Let $m\in\mathbb{Z}_{+}$ and $p>1$. Assume that for every $b\in BMO$,
$$
\|T_{b}^{m}f\|_{L^{p,\infty}(u)}\lesssim\|b\|_{BMO}^m\|f\|_{L^{p}(v)}.
$$
Assume additionally that $u$ is a doubling weight. Then
\begin{equation}\label{necbump}
\sup_{Q}\|u^{1/p}\|_{L^p,Q}\|v^{-1/p}\|_{L^{p'}(\log L)^{mp'},Q}<\infty.
\end{equation}
\end{theorem}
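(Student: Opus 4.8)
The plan is to test the hypothesized weak-type bound against a carefully chosen family of functions $b$ and $f$, localized on an arbitrary cube $Q$, in order to extract the Orlicz bump on the right-hand side of \eqref{necbump}. First I would fix a cube $Q$ and exploit non-degeneracy of $T$: by definition there is a sibling cube $\widetilde Q$ (comparable in size, at controlled distance) such that the kernel $K(x,y)$ has a fixed sign and size $\gtrsim |Q|^{-1}$ for $x\in \widetilde Q$, $y\in Q$. Then for a test function $f$ supported on $Q$ and a test symbol $b$ we have the pointwise lower bound, for $x\in\widetilde Q$,
$$|T_b^m f(x)|\gtrsim \frac{1}{|Q|}\left|\int_Q (b(x)-b(y))^m f(y)\,dy\right|,$$
after discarding the cross terms in the binomial expansion of $T_b^m$ (they are handled by absorbing lower-order commutators, or by noting $b(x)$ is essentially constant on the small cube $\widetilde Q$). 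Plugging this into the weak-type inequality over the set $\widetilde Q$ gives, roughly,
$$u(\widetilde Q)^{1/p}\,\frac{1}{|Q|}\left|\int_Q(b(x_0)-b(y))^mf(y)\,dy\right|\lesssim \|b\|_{BMO}^m\|f\|_{L^p(v)}$$
for a suitable $x_0\in\widetilde Q$; since $u$ is doubling, $u(\widetilde Q)\simeq u(Q)$, so $u(\widetilde Q)^{1/p}\simeq |Q|^{1/p}\|u^{1/p}\|_{L^p,Q}$.

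The heart of the argument is then the choice of $b$. To produce a logarithmic bump of order $mp'$ on $v^{-1/p}$, the natural candidate is a logarithmic symbol: one takes $b$ built from $\log$ of a maximal-type function adapted to $Q$ and to the function $g:=v^{-1/p}$ (more precisely, adapted to the function realizing the Luxemburg norm $\|v^{-1/p}\|_{L^{p'}(\log L)^{mp'},Q}$), so that $b$ has $\|b\|_{BMO}\lesssim 1$ while $(b(x_0)-b(y))^m$ grows like $\log^m$ of the relevant quantity on a large portion of $Q$. This is the standard mechanism behind commutator lower bounds (as in the unweighted $[b,T]$-to-$M^2$ lower bound, or in Cruz-Uribe--Moen type necessary conditions), now pushed to $m$-th order. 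One then chooses $f$ to optimize: the duality between $L^{p'}(\log L)^{mp'}$ and its complementary function $L^p(\exp L^{1/(mp')})$ dictates taking $f$ so that $f v^{1/p}$ pairs against the log-power $(b(x_0)-b(\cdot))^m v^{-1/p}$ appearing in the integral. Carrying out this optimization and normalizing $\|f\|_{L^p(v)}=1$ leaves exactly $|Q|^{1/p}\|v^{-1/p}\|_{L^{p'}(\log L)^{mp'},Q}$ on the left, and dividing by the $|Q|^{1/p}$ factors yields \eqref{necbump} after taking the supremum over $Q$.

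I expect the main obstacle to be the simultaneous, compatible choice of $b$ and $f$: $b$ must have $BMO$ norm bounded independently of $Q$ and of the weights, yet its oscillation over $Q$ must reproduce the full $\log^m$ growth, and $f$ must be chosen to realize the complementary Orlicz norm against $(b(x_0)-b)^m v^{-1/p}$ while being supported in $Q$ with $\|f\|_{L^p(v)}$ under control. Making these fit together typically requires a level-set decomposition of $v^{-1/p}$ on $Q$ (a Calderón--Zygmund-type stopping-time construction adapted to the Luxemburg norm), setting $b$ to be a logarithm of the resulting density and $f$ to be a suitable power times an indicator of a good level set; verifying $\|b\|_{BMO}\lesssim 1$ then reduces to the John--Nirenberg estimate for such logarithmic functions. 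A secondary technical point is handling the lower-order terms in the binomial expansion of $T_b^m f$ on $\widetilde Q$: these are controlled because $b$ is nearly constant on the small cube $\widetilde Q$ (so $b(x)-b(x_0)$ is tiny there) combined with the a priori weak-type bounds for the lower-order commutators $T_b^j$, $j<m$, which follow by induction from the same hypothesis. Once these pieces are in place, the doubling hypothesis on $u$ is used only in the single step $u(\widetilde Q)\simeq u(Q)$, and the rest is the optimization over $f$ described above.
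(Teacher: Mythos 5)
Your overall strategy --- non-degeneracy to produce a far cube, doubling to compare $u(\tilde Q)$ with $u(Q)$, a logarithmic $BMO$ symbol, and an optimization in $f$ --- is in the right spirit, but the proposal leaves unresolved exactly the points where the proof lives, and one of your fixes is incorrect. The paper avoids your ``simultaneous choice of $b$ and $f$'' obstacle by first dualizing the weak-type hypothesis into $\|(T_b^m)^*f\|_{L^{p'}(v^{1-p'})}\lesssim\|b\|_{BMO}^m\|f/u\|_{L^{p',1}(u)}$, placing the test function $f=u$ on the far ball $\tilde B$, and letting the bump come from the factor $\int_B|\f|^{mp'}v^{1-p'}$, so no Orlicz-duality optimization in $f$ is needed at all. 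The symbol is completely explicit: $b$ is a Jones-type extension (Theorem \ref{bmoj}, proved in the paper) of $g-g_Q$, where $g=\log^+\big(M(v^{1-p'}\chi_Q)/(v^{1-p'})_Q\big)$ is the Coifman--Rochberg function of the weight $v^{1-p'}$ (not of a norm-realizing function for $v^{-1/p}$, as you suggest), with $\|b\|_{BMO}\lesssim 1$ and $g_Q\lesssim 1$; the extension makes $b$ vanish on the far ball, i.e.\ on the support of the test function, so that $(b(x)-b(y))^m$ collapses exactly to $\f(x)^m$ --- no cross terms, no sign problem for odd $m$ --- and (\ref{eqlog}) converts $\int_Q|g-g_Q|^{mp'}v^{1-p'}$ into the desired $L^{p'}(\log L)^{mp'}$ bump. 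Your proposed substitute for this construction (a stopping-time decomposition adapted to the Luxemburg norm, $b$ a log of the resulting density) is not carried out, and your treatment of the ``lower-order terms'' is wrong: the hypothesis is assumed only for the single order $m$, so no induction gives weak-type bounds for $T_b^j$, $j<m$. (Fortunately none are needed, since for $x\notin\text{supp}\,f$ one has the exact identity $T_b^mf(x)=\int(b(x)-b(y))^mK(x,y)f(y)\,dy$; but your argument should not lean on the invalid induction.)

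Two further concrete gaps. The weak-type inequality cannot be applied ``at a suitable $x_0\in\tilde Q$'': to extract $u(\tilde Q)^{1/p}$ you need $|T_b^mf(x)|\gtrsim\lambda$ for essentially every $x\in\tilde Q$, since doubling does not let you pass from an arbitrary subset of $\tilde Q$ to all of $\tilde Q$ for a general $u$. This forces uniform control of $b$ on the far cube (hence the cutoff/extension of $b$) and of the kernel, i.e.\ Proposition \ref{h}: $K(x,y)=K(x_0,y_0)+O(\e_A/(A^nr^n))$, with the error term absorbed by taking $A$ large --- a step you gesture at but do not perform, and which is also where possible cancellation in $\int_Q(b(x)-b(y))^mK(x,y)f(y)\,dy$ must be ruled out. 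Finally, even after producing $\int_Q|b-c|^{mp'}v^{1-p'}$ for a constant $c$, one must remove the constant shift: since $g^{mp'}\lesssim|g-g_Q|^{mp'}+1$, the argument additionally needs the plain $A_p$-type bound $u_Q\big((v^{1-p'})_Q\big)^{p-1}\lesssim 1$, which the paper obtains by re-running the same scheme with $b=\chi_B$; your proposal does not account for this extra ingredient.
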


Consider first the case $m=0$. Then (\ref{necbump}) is just the usual $A_p$ condition for $(u,v)$. In this case, when $T$ is the Hilbert transform, the necessity of the $A_p$ condition was obtained by
B. Muckenhoupt and R. Wheeden \cite{MW} without assuming the doubling condition on~$u$. However, it is not clear to us whether this condition can be removed, in general. In a very recent work \cite{CML},
a similar statement was obtained assuming a slightly weaker directionally doubling condition. In \cite{LSU1}, it was shown that the doubling condition can be avoided but assuming $T:L^p(v)\to L^{p,\infty}(u)$
for a family of operators.

To the best of our knowledge, in the case $m\ge 1$, Theorem \ref{neccond} is new. Its proof is based on a special construction of $BMO$ functions which goes back to P. Jones \cite{J}.

By duality, it follows from Theorem \ref{neccond} that if $T_b^{m+1}: L^p(v)\to L^p(u)$ and $u$ and $v^{1-p'}$ are doubling, then, additionally to (\ref{necbump}) (with $m+1$), we also have that
$$
\sup_{Q}\|u^{1/p}\|_{L^p(\log L)^{(m+1)p},Q}\|v^{-1/p}\|_{L^{p'},Q}<\infty.
$$
Hence, assuming also that $T$ is Dini-continuous, by Corollary \ref{corpc} for $m\ge 1$ and by the separated bump conjecture with logarithmic bumps for $m=0$, we obtain that
$T_b^{m}: L^p(v)\to L^p(u)$. Observe that by a well-known intuition, $T_b^1$ is more ``singular" than $T$. Therefore, it is natural to
expect that $T_b^{m+1}: L^p(v)\to L^p(u)$ implies $T_b^m: L^p(v)\to L^p(u)$ even without assuming the doubling conditions on $u$ and $v^{1-p'}$. However, we do not know any direct proof of this fact
even in the case $m=0$, and, in particular, it is not clear to us whether these doubling conditions can be removed. See also Remark \ref{noteond} for some comments about this.

In this paper we also obtain several results related to Bloom type estimates.
We recall the following result, see \cite{Hyt,LOR2} and the references therein. It's sufficiency part in the case $m=1$ is due to S. Bloom \cite{B}.

\begin{theorem}\label{bl} Let $T$ be a non-degenerate Calder\'on-Zygmund operator with Dini-continuous kernel.
Let $m\in\mathbb{N}$ and $p>1$. Assume that $\la,\mu\in A_p$. Further, let
$\eta=\big(\frac{\mu}{\la}\big)^{1/pm}$. Then
$$
b\in BMO_{\eta}\Rightarrow\|T_b^m\|_{L^p(\la)}\lesssim \|b\|_{BMO_{\eta}}^m\|f\|_{L^p(\mu)}
$$
and
$$
\|T_b^mf\|_{L^p(\la)}\lesssim \|f\|_{L^p(\mu)}\Rightarrow b\in BMO_{\eta}.
$$
\end{theorem}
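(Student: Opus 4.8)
The plan is to prove the two implications separately, both via sparse domination of $T_b^m$. For sufficiency, Lemma~\ref{dual1} is not quite enough, since it only records $\|b\|_{BMO}$; instead I would use the sharper bilinear sparse bound from \cite{LOR1,LOR2} in which the oscillations are kept explicit: there are finitely many sparse families ${\mathcal S}$ so that, writing $\langle h\rangle_Q=\frac{1}{|Q|}\int_Q h$,
$$
|\langle T_b^m f,g\rangle|\lesssim\sum_{j=0}^m\binom{m}{j}\sum_{{\mathcal S}}\sum_{Q\in {\mathcal S}}\langle |b-b_Q|^j\,|f|\rangle_Q\;\langle |b-b_Q|^{m-j}\,|g|\rangle_Q\,|Q|.
$$
Dualizing $L^p(\la)$, it then suffices to show, for each $0\le j\le m$ and each sparse ${\mathcal S}$, that the single form $\sum_{Q\in {\mathcal S}}\langle |b-b_Q|^j|f|\rangle_Q\langle |b-b_Q|^{m-j}|g|\rangle_Q|Q|$ is bounded by $\|b\|_{BMO_\eta}^m\|f\|_{L^p(\mu)}\|g\|_{L^{p'}(\la^{1-p'})}$.

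To estimate such a form one first peels off the oscillation factors. Since $\mu,\la\in A_p$, the Bloom weight $(\mu/\la)^{1/p}$ lies in $A_2$, hence $\eta$, being a root of it, belongs to $A_\infty$; John--Nirenberg--type estimates in $BMO_\eta$ together with generalized Hölder inequalities (and harmless enlargements of the Young functions) convert each inner average $\langle |b-b_Q|^k|h|\rangle_Q$ into $\|b\|_{BMO_\eta}^k$ times $\langle\eta\rangle_Q^k$ times an $L(\log L)^k$-average of $|h|$. Plugging this in, one is left with a sum of the shape
$$
\|b\|_{BMO_\eta}^m\sum_{{\mathcal S}}\sum_{Q\in {\mathcal S}}\langle\eta\rangle_Q^{m}\,\|f\|_{L(\log L)^j,Q}\,\|g\|_{L(\log L)^{m-j},Q}\,|Q|,
$$
and here the crucial point is that $\eta^{pm}=\mu/\la$: when the averages of $f$ and $g$ are taken in the weighted sense the powers of $\mu$, $\la$ and $\eta$ telescope exactly, and what remains is a two-weight bound for an $A_{L(\log L)^m,{\mathcal S}}$-type operator on a couple satisfying a genuine $A_p$ condition, which follows from $\mu,\la\in A_p$ by a Carleson embedding argument (exactly as for $A_{\mathcal S}$ itself). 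This yields the sufficiency with the bound scaling as $\|b\|_{BMO_\eta}^m$.

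For the necessity I would use that $T$ is non-degenerate: given a cube $Q$ there is a cube $\widetilde Q$ with $\ell(\widetilde Q)\simeq\ell(Q)$, $\mathrm{dist}(Q,\widetilde Q)\simeq\ell(Q)$, on which the kernel does not change sign and $|K(x,y)|\gtrsim\ell(Q)^{-n}$ for $(x,y)\in\widetilde Q\times Q$. Testing $T_b^m$ against functions built from $\mathrm{sgn}(b-b_Q)$ and the powers $|b-b_Q|^{m-1}$ on $Q$ and $\widetilde Q$, expanding $(b(x)-b(y))^m=\sum_l\binom{m}{l}(b(x)-b_Q)^l(b_Q-b(y))^{m-l}$, isolating the dominant contribution and using the sign and size of $K$ (the cross terms and the deviation of $K$ from a constant being absorbed via the Dini modulus after subtracting suitable constants), one extracts a lower bound of the form $(\langle|b-b_Q|\rangle_Q)^m|Q|\lesssim\|T_b^m\|_{L^p(\mu)\to L^p(\la)}\,\|\mathrm{test}\|_{L^p(\mu)}\,\|\mathrm{test}\|_{L^{p'}(\la^{1-p'})}$. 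The norms of the bounded, $Q$-localized test functions on the right are controlled by powers of $\mu(Q)$, $\la(Q)$, $|Q|$ thanks to $\mu,\la\in A_p$, and these recombine --- using once more $\eta^{pm}=\mu/\la$ --- into $\langle\eta\rangle_Q^m|Q|$; taking $m$-th roots gives $\sup_Q\langle\eta\rangle_Q^{-1}\langle|b-b_Q|\rangle_Q<\infty$, i.e.\ $b\in BMO_\eta$.

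I expect the main obstacle to be the necessity in the iterated case $m\ge 2$: isolating the leading term of the binomial expansion of $(b(x)-b(y))^m$ on $\widetilde Q\times Q$ while keeping all cross terms and the kernel-smoothness error strictly subordinate is delicate and seems to require either an induction on $m$ or a nested choice of cubes rather than a single twin cube; this is carried out in \cite{Hyt,LOR2}. On the sufficiency side the only genuine work is the weighted off-diagonal bookkeeping of the second step --- the John--Nirenberg/generalized Hölder estimates in $BMO_\eta$ and the verification that the three families of weight powers cancel precisely because $\eta$ is the geometric interpolant of $\mu$ and $\la$.
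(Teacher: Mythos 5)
First, a point of bookkeeping: the paper does not prove Theorem \ref{bl} at all --- it is recalled from \cite{Hyt,LOR2} --- so the only fair comparison is with the machinery the paper does contain, namely Lemma \ref{dual1} together with the iteration scheme used in the proof of Theorem \ref{fpc}. Measured against that, your sufficiency argument has two genuine problems. You discard Lemma \ref{dual1} on the grounds that it ``only records $\|b\|_{BMO}$'', but this is a misreading: the lemma is stated for an arbitrary weight $\eta$, keeps the constant $\|b\|_{BMO_\eta}^m$, and places $\eta$ inside the sparse operator $A_{\mathcal S,\eta}$. The route you substitute for it hinges on the peeling inequality $\langle|b-b_Q|^k|h|\rangle_Q\lesssim\|b\|_{BMO_\eta}^k\langle\eta\rangle_Q^k\|h\|_{L(\log L)^k,Q}$, which would require an exponential John--Nirenberg inequality for $BMO_\eta$ at the scale $\langle\eta\rangle_Q$; no such self-improvement is available for weighted $BMO$ in this form, and avoiding it is precisely why the paper (following \cite{LOR1,LOR2}) replaces $|b-b_Q|$ by the sparse sum (\ref{sparses}) and works with the iterated operator $A_{\mathcal S,\eta}^m$ rather than with $\langle\eta\rangle_Q^m$. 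Moreover, even if the peeling were granted, your final step --- that the resulting $L(\log L)^m$-bumped sparse form is bounded because the couple ``satisfies a genuine $A_p$ condition \ldots by a Carleson embedding argument (exactly as for $A_{\mathcal S}$ itself)'' --- is false as stated: a joint two-weight $A_p$ condition is not sufficient even for $A_{\mathcal S}$ (this insufficiency is the raison d'\^etre of the bump conditions in this very paper). What actually closes the argument, and what the paper does in the proof of Theorem \ref{fpc}, is elementary and different: since $\eta^{pm}=\mu/\la$, the intermediate weights $\la\eta^{kp}=\la^{1-k/m}\mu^{k/m}$, $0\le k\le m$, are each in $A_p$ by H\"older, so one simply applies the one-weight boundedness of $A_{\mathcal S}$ on $L^p(w)$, $w\in A_p$, $(m+1)$ times to get $\|A_{\mathcal S}(A_{\mathcal S,\eta}^m f)\|_{L^p(\la)}\lesssim\|f\|_{L^p(\mu)}$, and then Lemma \ref{dual1} finishes the sufficiency.

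For the necessity you only offer an outline and you yourself flag that the key difficulty --- controlling the cross terms of $(b(x)-b(y))^m$ and the kernel error for $m\ge 2$ --- is resolved in \cite{Hyt,LOR2}; since that is exactly where the content of this half of the theorem lies (the paper likewise cites it rather than reproving it), your proposal does not contain a proof of this implication. Note also that the related argument the paper does carry out (Proposition \ref{neccon}, testing with $b=\eta\chi_B$ via Proposition \ref{h}) yields testing-type necessary conditions on the weights, not the statement $\|T_b^mf\|_{L^p(\la)}\lesssim\|f\|_{L^p(\mu)}\Rightarrow b\in BMO_\eta$, so it cannot be substituted for the median/oscillation lower-bound technique of \cite{Hyt,LOR2}. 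Finally, the recombination of the test-function norms into $\langle\eta\rangle_Q^m|Q|$ that you assert ``using once more $\eta^{pm}=\mu/\la$'' silently uses reverse-Jensen/$A_\infty$ properties of $\la,\mu\in A_p$ (to pass from $\langle\mu\rangle_Q$, $\langle\la^{1-p'}\rangle_Q$ to $\langle(\mu/\la)^{1/pm}\rangle_Q^m$); this is true but needs to be said and justified, not assumed.
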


It is natural to ask whether this result remains valid under more general assumptions on $\la,\mu$ and $\eta$.
We conjecture that this is not the case.

\begin{con}\label{conbl}
Let $m\in\mathbb{N}$ and $p>1$. Let $\la,\mu$ and $\eta$ be arbitrary weights such that the following holds:
$$
b\in BMO_{\eta}\Rightarrow\|T_b^m\|_{L^p(\la)}\lesssim \|b\|_{BMO_{\eta}}^m\|f\|_{L^p(\mu)}
$$
and
$$
\|T_b^mf\|_{L^p(\la)}\lesssim \|f\|_{L^p(\mu)}\Rightarrow b\in BMO_{\eta}.
$$
Then $\la,\mu\in A_p$ and $\eta\simeq \big(\frac{\mu}{\la}\big)^{1/pm}$.
\end{con}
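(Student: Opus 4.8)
Because Conjecture~\ref{conbl} is left open, the following is a proposed line of attack rather than a proof; the plan is to reduce the statement to the single hard point that $\la,\mu\in A_p$, after which everything else is soft. Assume $\la,\mu,\eta$ satisfy the two displayed implications. Granting that $\la,\mu\in A_p$, Theorem~\ref{bl} turns boundedness of $T_b^m\colon L^p(\mu)\to L^p(\la)$ into the condition $b\in BMO_\rho$ with $\rho=(\mu/\la)^{1/pm}$. Then the first hypothesis of the conjecture together with the necessity half of Theorem~\ref{bl} gives a continuous inclusion $BMO_\eta\hookrightarrow BMO_\rho$, and the second hypothesis together with the sufficiency half gives the reverse inclusion $BMO_\rho\hookrightarrow BMO_\eta$, both continuous by the closed graph theorem. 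I would then test each inclusion on the Jones-type functions attached to a cube $Q$---normalized to have unit weighted $BMO$ norm against one weight while oscillating by a fixed multiple of the average of that weight over $Q$---to deduce $\langle\eta\rangle_Q\simeq\langle\rho\rangle_Q$ for every cube $Q$, where $\langle w\rangle_Q=\frac1{|Q|}\int_Q w$. Shrinking $Q$ to a point and applying the Lebesgue differentiation theorem would give $\eta\simeq(\mu/\la)^{1/pm}$ a.e., which is the conclusion.

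To obtain $\la,\mu\in A_p$ I would argue by testing. Fix a cube $Q$; by non-degeneracy of $T$ there is a cube $\tilde Q$ with $\ell(\tilde Q)\simeq\ell(Q)$, $d(Q,\tilde Q)\lesssim\ell(Q)$ and $3Q\cap3\tilde Q=\emptyset$, on which $K(x,y)$ does not change sign and $|K(x,y)|\gtrsim|Q|^{-1}$ for $(x,y)\in Q\times\tilde Q$. Using the $BMO_\eta$ construction going back to Jones (the one already invoked for Theorem~\ref{neccond}), build $b=b^Q$ with $\|b^Q\|_{BMO_\eta}\lesssim1$, $b^Q\equiv0$ on $\tilde Q$, $b^Q\gtrsim\langle\eta\rangle_Q$ on $Q$, and $b^Q(x)-b^Q(y)$ of one sign on $Q\times\tilde Q$. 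Then for $x\in Q$,
$$
|T_{b^Q}^m\chi_{\tilde Q}(x)|=\Big|\int_{\tilde Q}(b^Q(x)-b^Q(y))^mK(x,y)\,dy\Big|\gtrsim\langle\eta\rangle_Q^m ,
$$
so the first hypothesis yields $\langle\eta\rangle_Q^{mp}\la(Q)\lesssim\mu(\tilde Q)$; running the same argument for $(T_b^m)^*=(-1)^m(T^*)_b^m$ and the dual pair gives the symmetric bound $\langle\eta\rangle_Q^{mp'}\mu^{1-p'}(Q)\lesssim\la^{1-p'}(\tilde Q')$. From the second hypothesis I would extract, by contraposition, a matching lower bound $\langle\eta\rangle_Q\gtrsim(\langle\mu\rangle_Q/\langle\la\rangle_Q)^{1/pm}$: if it failed on some cube, one would produce a function $b$ supported on that cube and its dyadic descendants that is not in $BMO_\eta$ yet has a bounded commutator, contradicting the implication. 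Feeding these bounds into the definition of the two-weight $A_p$ characteristic of $(\la,\mu)$ should close the argument.

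The hard part will be exactly this last passage to $A_p$. The testing above controls $\la(Q)$ and $\mu^{1-p'}(Q)$ only in terms of averages over \emph{neighbouring} cubes $\tilde Q,\tilde Q'$, and converting this into the genuine $A_p$ condition for $(\la,\mu)$ appears to require an a priori doubling (or $A_\infty$) assumption on $\la$ and $\mu$---precisely the kind of hypothesis already present in Theorem~\ref{neccond}---or a new idea that avoids comparing $\tilde Q$ with $Q$. Secondary difficulties are arranging the sign of $b^Q(x)-b^Q(y)$ when $m$ is odd, and verifying that the Jones-type $BMO_\eta$ functions with the prescribed oscillation exist for an arbitrary weight $\eta$ and not only for $\eta\in A_\infty$. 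It is this cluster of obstacles that, as far as we can see, keeps Conjecture~\ref{conbl} a conjecture rather than a theorem.
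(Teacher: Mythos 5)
The statement you are attempting is Conjecture \ref{conbl}, which the paper explicitly leaves open (``our current tools are still far away from enabling us to establish this conjecture in full generality''), so there is no proof of it in the paper to compare against; you correctly recognize this and offer only a strategy. The soft half of your plan is sound and essentially coincides with the paper's partial result, Theorem \ref{fpc}: granting $\la,\mu\in A_p$, your two inclusions $BMO_\eta\subseteq BMO_\rho\subseteq BMO_\eta$ with $\rho=(\mu/\la)^{1/pm}$ do follow from the hypotheses together with the two halves of Theorem \ref{bl}, and the step you leave vague (passing from the set equality of weighted $BMO$ spaces to $\eta\simeq\rho$ a.e.) is exactly what the paper's Lemma \ref{bmo} supplies for arbitrary weights: if $\eta_1/\eta_2\notin L^\infty$ there is $b\in BMO_{\eta_1}\setminus BMO_{\eta_2}$, so mutual inclusion forces pointwise comparability without any closed-graph or cube-averaging argument. (The paper's own proof of Theorem \ref{fpc} runs slightly differently -- it gets $\la\eta^{pm}\lesssim\mu$ from the testing estimate of Proposition \ref{neccon} and the converse by a sparse-operator contradiction argument -- but your route through Theorem \ref{bl} plus Lemma \ref{bmo} is legitimate.) Also, two of the ``secondary difficulties'' you list are not real obstacles: testing with $b=\eta\chi_B$, as in Proposition \ref{neccon}, gives $\|b\|_{BMO_\eta}\le 2$ for an arbitrary weight $\eta$ with no Jones-type construction, and since this $b$ vanishes on the support of $f$ one has $(b(x)-b(y))^m=b(x)^m$ there, so no sign issue arises for odd $m$; Proposition \ref{h} then handles the kernel.

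The genuine gap is the one you yourself flag: deducing $\la,\mu\in A_p$ from the two implications. Your testing scheme only produces estimates of the form $\langle\eta\rangle_Q^{mp}\,\la(Q)\lesssim\mu(\tilde Q)$ coupling a cube with a \emph{disjoint neighbouring} cube, and there is no known way to upgrade such ``separated'' conditions to the $A_p$ condition for $\la$ and $\mu$ individually without an a priori doubling or $A_\infty$ hypothesis -- which is precisely why the paper's Theorems \ref{fpc} and \ref{spc} carry such hypotheses ($\la,\mu\in A_p$ in the first; $\la\in A_p$, or $\la\in A_\infty$ and $\mu^{1-p'}\in A_\infty$, in the second, and even then only with $\eta=1$). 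Moreover, it is not clear how your contrapositive use of the second hypothesis would produce, for a completely arbitrary triple $(\la,\mu,\eta)$, a function $b\notin BMO_\eta$ whose commutator is nevertheless bounded: the paper's mechanism for this (Lemma \ref{ap}, Corollary \ref{c}, Neugebauer's Theorem \ref{N}, and the sparse bound (\ref{ansuf})) already needs $\la\in A_p$ or the $A_\infty$ assumptions to construct the unbounded auxiliary weight $u$ with $(\la u,\mu)\in A_p$. So your proposal reproduces, in outline, what is known, and the missing step it identifies is exactly the step that keeps the statement a conjecture.
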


Unluckily, our current tools are still far away from enabling us to establish this conjecture in full generality. However we still manage to provide several partial results in Section \ref{sec:Bloom}.

The remainder of the paper is organized as follows. Section \ref{sec:Prelim} contains some preliminary results and definitions. In Section \ref{sec:SparseB} we gather some sparse bounds that will be crucial to establish our main results. In Section \ref{sec:JoingBump} we prove Theorems~\ref{extbctbm} and~\ref{sepbumex}.
Section \ref{sec:necCond} is devoted to prove Theorem \ref{neccond}. In Section \ref{sec:Bloom} we discuss the partial results related to Conjecture \ref{conbl}.
Finally, in the Appendix we provide an example comparing our sufficient condition in Theorem \ref{extbctbm} with the condition from~\cite{CM}.

Throughout the paper we use the notation $A\lesssim B$ if $A\le CB$ with some independent constant $C$. We write $A\simeq B$ if $A\lesssim B$ and $B\lesssim A$.

\section{Preliminaries}\label{sec:Prelim}
\subsection{Calder\'on-Zygmund operators}
We say that $K$ is an $\omega$-Calder\'on-Zygmund kernel if $|K(x,y)|\le \frac{C_K}{|x-y|^n},x\not=y,$ and
$$|K(x,y)-K(x',y)|+|K(y,x)-K(y,x')|\le \o\left(\frac{|x-x'|}{|x-y|}\right)\frac{1}{|x-y|^n},$$
whenever $|x-y|>2|x-x'|$, where $\o:[0,1]\to [0,\infty)$ is the modulus of continuity.

A linear, $L^2$ bounded operator $T$ is called $\omega$-Calder\'on-Zygmund if it has a representation
$$Tf(x)=\int_{{\mathbb R}^n}K(x,y)f(y)dy\quad\text{for all}\,\,x\not\in \text{supp}\,f,$$
where $K$ is an $\omega$-Calder\'on-Zygmund kernel.

An $\omega$-Calder\'on-Zygmund kernel is called Dini-continuous if $\int_0^1\o(t)\frac{dt}{t}<\infty.$

We say that an $\omega$-Calder\'on-Zygmund kernel is non-degenerate if $\o(t)\to 0$ as $t\to 0$, and for every $y\in {\mathbb R}^n$ and $r>0$, there exists $x\not\in B(y,r)$ with
$$|K(x,y)|\ge \frac{c_0}{r^n}.$$
This definition was given by T. Hyt\"onen \cite{Hyt} (in the case when $K(x,y)=k(x-y)$, a similar notion was introduced by E. Stein \cite[p. 210]{S}).

We sat that $T$ is a non-degenerate Calder\'on-Zygmund operator if $T$ and its adjoint $T^*$ are associated with non-degenerate kernels. In other words,
we require that if $T$ is associated with kernel $K$, then $K(x,y)$ and $\tilde K(x,y)=K(y,x)$ are non-degenerate.

The following result is contained in the proof of \cite[Prop 2.2]{Hyt}.

\begin{prop}\label{h}
Let $K$ be a non-degenerate kernel. Then for every $A\ge 3$ and every ball $B(y_0, r)$,
there is a disjoint ball $\tilde B=B(x_0,r)$ at distance $\text{dist}(B,\tilde B)\simeq Ar$ such that
$$
|K(x_0,y_0)|\simeq \frac{1}{A^nr^n},
$$
and for all $y\in B$ and $x\in \tilde B$, we have
$$
|K(x,y)-K(x_0,y_0)|\lesssim \frac{\e_A}{A^nr^n},
$$
where $\e_A\to 0$ as $A\to \infty$.
\end{prop}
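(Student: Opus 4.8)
The plan is to extract the ball $\tilde B$ directly from the non-degeneracy hypothesis on $K$ and then to control the oscillation $|K(x,y)-K(x_0,y_0)|$ via the $\o$-smoothness of the kernel, exploiting that the mutual distance $|x_0-y_0|\simeq Ar$ dwarfs the common radius $r$ of $B$ and $\tilde B$. Concretely, I would first apply the non-degeneracy of $K$ at the point $y_0$ with the radius $R=Ar$, obtaining a point $x_0\notin B(y_0,Ar)$ with $|K(x_0,y_0)|\ge c_0/(Ar)^n$. Combining this lower bound with the size estimate $|K(x_0,y_0)|\le C_K/|x_0-y_0|^n$ forces $|x_0-y_0|^n\le (C_K/c_0)(Ar)^n$, hence
\[
Ar\le |x_0-y_0|\le (C_K/c_0)^{1/n}\,Ar ,
\]
and inserting $|x_0-y_0|\ge Ar$ back into the size estimate yields $|K(x_0,y_0)|\simeq (Ar)^{-n}$, the first claimed equivalence. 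With $\tilde B:=B(x_0,r)$, the bound $|x_0-y_0|\ge Ar\ge 3r>2r$ makes $B$ and $\tilde B$ disjoint, and $\text{dist}(B,\tilde B)=|x_0-y_0|-2r$ lies between $(A-2)r\ge\frac13 Ar$ and $(C_K/c_0)^{1/n}Ar$, so that $\text{dist}(B,\tilde B)\simeq Ar$.

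For the oscillation bound, fix $x\in\tilde B$ and $y\in B$ and split
\[
|K(x,y)-K(x_0,y_0)|\le |K(x,y)-K(x_0,y)|+|K(x_0,y)-K(x_0,y_0)| .
\]
Here $|x-x_0|<r$ and $|y-y_0|<r$, while $|x_0-y|\ge (A-1)r$ and $|x-y|\ge (A-2)r$; hence, once $A$ exceeds an absolute constant $A_0$, both separation conditions $|x-y|>2|x-x_0|$ and $|x_0-y|>2|y-y_0|$ hold, so the $\o$-smoothness condition applies to the first slot of $K$ in the first difference and to the second slot in the second difference. Using also that $\o$ is nondecreasing, the right-hand side is bounded by
\[
\frac{\o(1/(A-2))}{((A-2)r)^n}+\frac{\o(1/(A-1))}{((A-1)r)^n}\lesssim \frac{\o(c/A)}{(Ar)^n}
\]
for a suitable absolute constant $c>0$, and since $\o(t)\to 0$ as $t\to 0$ is part of the definition of non-degeneracy, one may take $\e_A=C\,\o(c/A)$, which tends to $0$ as $A\to\infty$. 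For the leftover bounded range $3\le A\le A_0$, where the separation conditions may fail, one instead uses the crude bound $|K(x,y)-K(x_0,y_0)|\le |K(x,y)|+|K(x_0,y_0)|\lesssim (Ar)^{-n}$, whose implicit constant is bounded on this range and can be absorbed into $\e_A$.

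The argument is essentially a bookkeeping exercise; the only point that genuinely needs attention is the verification of the separation conditions $|x-y|>2|x-x_0|$ and $|x_0-y|>2|y-y_0|$ that license the use of the $\o$-smoothness estimate, which is precisely why it is convenient to treat the range of large $A$ (where they are automatic) separately from a harmless bounded range dispatched by the pointwise size bound alone.
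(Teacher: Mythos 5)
Your proof is correct, and it is essentially the argument the paper relies on: the paper gives no proof of its own but cites the proof of Hyt\"onen's Proposition 2.2, which proceeds exactly as you do — pick $x_0$ by non-degeneracy at radius $Ar$, get $|K(x_0,y_0)|\simeq (Ar)^{-n}$ and $\mathrm{dist}(B,\tilde B)\simeq Ar$ by combining with the size bound, and control the oscillation by the $\omega$-smoothness in each slot, with $\e_A\simeq\omega(c/A)$. Your extra care with the separation conditions (and the crude size bound on the bounded range of $A$) is a legitimate way to handle the small-$A$ regime.
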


\subsection{Dyadic lattices and sparse families}
By a cube in ${\mathbb R}^n$ we mean a half-open cube $Q=\prod_{i=1}^n[a_i,a_i+h), h>0$. Denote by $\ell_Q$ the sidelength of $Q$.
Given a cube $Q_0\subset {\mathbb R}^n$, let ${\mathcal D}(Q_0)$ denote the set of all dyadic cubes with respect to $Q_0$, that is, the cubes
obtained by repeated subdivision of $Q_0$ and each of its descendants into $2^n$ congruent subcubes.

A dyadic lattice ${\mathscr D}$ in ${\mathbb R}^n$ is any collection of cubes such that
\begin{enumerate}
\renewcommand{\labelenumi}{(\roman{enumi})}
\item
if $Q\in{\mathscr D}$, then each child of $Q$ is in ${\mathscr D}$ as well;
\item
every 2 cubes $Q',Q''\in {\mathscr D}$ have a common ancestor, i.e., there exists $Q\in{\mathscr D}$ such that $Q',Q''\in {\mathcal D}(Q)$;
\item
for every compact set $K\subset {\mathbb R}^n$, there exists a cube $Q\in {\mathscr D}$ containing $K$.
\end{enumerate}
For this definition we refer to \cite{LN}.

A family of cubes ${\mathcal S}$ is called sparse if there exists $0<\a<1$ such that for every $Q\in {\mathcal S}$
one can find a measurable set $E_Q\subset Q$ with $|E_Q|\ge \a|Q|$, and the sets $\{E_Q\}$ are pairwise
disjoint.

In our results the sparseness constant $\a$ will depend only on $n$, and its precise value will not be relevant.

\subsection{Young functions and normalized Luxemburg norms}
By a Young function we mean a continuous,
convex, strictly increasing function $\f:[0,\infty)\to [0,\infty)$
with $\f(0)=0$ and $\f(t)/t\to \infty$ as $t\to \infty$.

The Orlicz maximal operator $M_{\f}$ is defined for a Young function $\f$ by
$$M_{\f}f(x)=\sup_{Q\ni x}\|f\|_{\f,Q},$$
where the supremum is taken over all cubes $Q$ containing the point $x$. If $\f(t)=t$, then $M_{\f}=M$
is the standard Hardy-Littlewood maximal operator.
It was shown by C. P\'erez \cite{P} that
\begin{equation}\label{p}
\f\in B_p\Rightarrow \|M_{\f}f\|_{L^p}\lesssim \|f\|_{L^p}\quad(p>1).
\end{equation}

Given a Young function $\f$, its complementary function is defined by
$$\bar\f(t)=\sup_{s\ge 0}\big(st-\f(s)\big).$$
Then $\bar\f$ is also a Young function satisfying $t\le \bar\f^{-1}(t)\f^{-1}(t)\le 2t$.
Also the following H\"older type estimate holds:
\begin{equation}\label{prop2}
\frac{1}{|Q|}\int_Q|f(x)g(x)|dx\le 2\|f\|_{\f,Q}\|g\|_{\bar\f,Q}.
\end{equation}

There is a more general version of (\ref{prop2}).
\begin{lemma}\label{holder} Let $A,B$ and $C$ be non-negative, continuous, strictly increasing functions on $[0,\infty)$
satisfying $A^{-1}(t)B^{-1}(t)\le C^{-1}(t)$ for all $t\ge 0$. Assume also that $C$ is convex. Then
$$
\|fg\|_{C,Q}\le 2\|f\|_{A,Q}\|g\|_{B,Q}.
$$
\end{lemma}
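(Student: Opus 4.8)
The plan is to deduce the generalized Hölder inequality in Lemma~\ref{holder} from the single-cube definition of the Luxemburg norm by a scaling argument, exactly as one proves the classical estimate~(\ref{prop2}). First I would reduce to the normalized setting: fix a cube $Q$ and set $\la=\|f\|_{A,Q}$ and $\mu=\|g\|_{B,Q}$; if either is $0$ or $\infty$ the inequality is trivial, so assume $0<\la,\mu<\infty$. By the definition of the Luxemburg norm (and continuity of $A$ and $B$, which makes the infimum an actual minimum or at least allows us to pass to the limit), we have
$$
\frac{1}{|Q|}\int_Q A\!\left(\frac{|f(x)|}{\la}\right)dx\le 1\qquad\text{and}\qquad \frac{1}{|Q|}\int_Q B\!\left(\frac{|g(x)|}{\mu}\right)dx\le 1.
$$

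The key pointwise step is the inequality
$$
C\!\left(\frac{st}{2}\right)\le \frac{1}{2}\big(A(s)+B(t)\big)\qquad(s,t\ge 0),
$$
which I would derive from the hypothesis $A^{-1}(t)B^{-1}(t)\le C^{-1}(t)$. Indeed, writing $s=A^{-1}(\sigma)$, $t=B^{-1}(\tau)$ for $\sigma,\tau\ge 0$, the hypothesis gives $st\le C^{-1}(\sigma)C^{-1}(\tau)$, and since $C^{-1}$ is increasing we get $st\le C^{-1}\big(\max(\sigma,\tau)\big)\le C^{-1}(\sigma+\tau)$, hence $st\le C^{-1}(A(s)+B(t))$ by monotonicity of $C^{-1}$. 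Now apply $C$, which is increasing, and use convexity of $C$ together with $C(0)=0$ (so $C(r/2)\le \tfrac12 C(r)$): $C(st/2)\le C\big(\tfrac12(A(s)+B(t))\big)\le \tfrac12\big(A(s)+B(t)\big)$. One should be slightly careful here about whether $C$ is assumed to vanish at $0$; if $C$ is only strictly increasing and convex, the subadditivity-type bound $C(r/2)\le C(r) - C(r/2) \le \tfrac12 C(r)$ still needs $C(0)\le 0$, so I would note that replacing $C$ by $C(\cdot)-C(0)$ changes nothing and reduces to the normalized case, or simply invoke that the Young functions in the paper satisfy $C(0)=0$.

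Combining these, set $s=|f(x)|/\la$ and $t=|g(x)|/\mu$ and integrate:
$$
\frac{1}{|Q|}\int_Q C\!\left(\frac{|f(x)g(x)|}{2\la\mu}\right)dx\le \frac{1}{2}\left(\frac{1}{|Q|}\int_Q A\!\left(\frac{|f(x)|}{\la}\right)dx+\frac{1}{|Q|}\int_Q B\!\left(\frac{|g(x)|}{\mu}\right)dx\right)\le \frac{1}{2}(1+1)=1.
$$
By the definition of the Luxemburg norm this says precisely $\|fg\|_{C,Q}\le 2\la\mu=2\|f\|_{A,Q}\|g\|_{B,Q}$, which is the claim. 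The only mild obstacle is the bookkeeping around the endpoints (when a norm is $0$ or $\infty$, and when the defining infimum is not attained), which is handled by a routine limiting argument: if $\la<\|f\|_{A,Q}+\e$ works for every $\e>0$ then letting $\e\to 0$ and using Fatou or monotone convergence preserves the averaged inequality. I expect no serious difficulty; this is the standard proof of generalized Hölder in Orlicz spaces, and the whole content is the elementary inverse-function manipulation that turns $A^{-1}B^{-1}\le C^{-1}$ into the pointwise Young-type inequality above.
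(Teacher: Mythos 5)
Your overall strategy is the standard one, and it is the same as in the proof the paper relies on but does not reproduce (the text only cites O'Neil \cite{ON} and \cite{LOR1}): normalize $f$ and $g$ by their Luxemburg norms, establish the pointwise Young-type inequality $st\le C^{-1}\big(A(s)+B(t)\big)$, and integrate using convexity of $C$. The conclusion and the integration step are correct. However, your justification of the key pointwise inequality is garbled at one point: the hypothesis $A^{-1}(t)B^{-1}(t)\le C^{-1}(t)$ compares the inverses at the \emph{same} argument, so with $s=A^{-1}(\sigma)$ and $t=B^{-1}(\tau)$ it does not give $st\le C^{-1}(\sigma)C^{-1}(\tau)$; and even if one had that bound, deducing $st\le C^{-1}(\max(\sigma,\tau))$ from a product of two values of $C^{-1}$ is not a monotonicity statement. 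The correct route is to use monotonicity of $A^{-1}$ and $B^{-1}$ first:
$$
st=A^{-1}(\sigma)B^{-1}(\tau)\le A^{-1}\big(\max(\sigma,\tau)\big)B^{-1}\big(\max(\sigma,\tau)\big)\le C^{-1}\big(\max(\sigma,\tau)\big)\le C^{-1}(\sigma+\tau),
$$
which is exactly $st\le C^{-1}\big(A(s)+B(t)\big)$. With that one-line repair the rest of your argument goes through: the factor $2$ comes from $C(r/2)\le\frac12 C(r)$, which indeed requires $C(0)=0$ (harmless, since in every application $C$ is a normalized function such as $t\log^m(e+t)$, and one may always replace $C$ by $C-C(0)$), and the endpoint/limiting issues are routine as you indicate.
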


This lemma was proved by R. O'Neil \cite{ON} under the assumption that $A,B$ and $C$ are Young functions but the same proof works under the above conditions (see \cite{LOR1} for the details).

Recall the well-known facts (see, e.g., \cite[Ch. 10]{W}) that
\begin{equation}\label{maxlog}
\frac{1}{|Q|}\int_QM^k(f\chi_Q)\lesssim \|f\|_{L(\log L)^k,Q},
\end{equation}
where $M^k$ denotes the  $k$-th iteration of the maximal operator,
and
\begin{equation}\label{eqlog}
\|f\|_{L(\log L)^{\a},Q}\simeq \frac{1}{|Q|}\int_Q|f(x)|\log^{\a}\left(\frac{|f(x)|}{|f|_Q}+e\right)dx.
\end{equation}

Dealing with the logarithmic bumps of the form $t^p\log^{\a}(e+t)$, we will frequently use the following estimates. First, since $\||f|^r\|_{\f,Q}= \|f\|_{\f(t^r),Q}^r$ for any $r>0$, we have
$$\||f|^{1/p}\|_{L^p(\log L)^{\a}, Q}\simeq \|f\|_{L(\log L)^{\a},Q}^{1/p}.$$
Second, it follows from (\ref{eqlog}) and H\"older's inequality that for any $0<\d<1$,
$$\|f\|_{L(\log L)^{\a},Q}\lesssim (|f|_Q)^{\d}\|f\|_{L(\log L)^{\frac{\a}{1-\d}},Q}^{1-\d}.$$

\section{Sparse bounds for iterated commutators}\label{sec:SparseB}
Let $T$ be a Calder\'on-Zygmund operator with Dini-continuous kernel, and assume that $b$ is a locally integrable function.
Consider the $m$-th iterated commutator $T_b^m$.

Recall the following pointwise sparse bound established for $m=1$ in \cite{LOR1} and for $m\ge 1$ in \cite{IR}: for every bounded and compactly supported $f$,
there exist $3^n$ sparse families ${\mathcal S}_j\subset {\mathscr D_j}$ such that for a.e. $x\in {\mathbb R}^n$,
\begin{equation}\label{point}
|T_{b}^{m}f(x)|\lesssim \sum_{j=1}^{3^{n}}\sum_{Q\in\mathcal{S}_{j}}\sum_{k=0}^{m}|b(x)-b_{Q}|^{m-k}\left(\frac{1}{|Q|}\int_{Q}|b-b_{Q}|^{k}|f|\right)\chi_{Q}(x).
\end{equation}

Next, it was shown in \cite[Lemma 5.1]{LOR1} that given a sparse family ${\mathcal S}\subset {\mathscr D}$, there exists a sparse family $\tilde{\mathcal{S}}\subset {\mathscr D}$
containing $\mathcal{S}$ and such that if $Q\in\tilde{\mathcal{S}}$,
then for a.e. $x\in Q$,
\begin{equation}\label{sparses}
|b(x)-b_{Q}|\leq 2^{n+2}\sum_{P\in\tilde{\mathcal{S}},\ P\subseteq Q}\left(\frac{1}{|P|}\int_{P}|b-b_{P}|\right)\chi_{P}(x).
\end{equation}

Suppose now that $\eta$ is a weight, and $b\in BMO_{\eta}$, namely,
$$\|b\|_{BMO_{\eta}}=\sup_{Q}\frac{1}{\eta(Q)}\int_Q|b(x)-b_Q|dx<\infty.$$
Then (\ref{sparses}) allows to transform the right-hand side of (\ref{point}) in the following way.

Given a sparse family ${\mathcal S}$, define the sparse operator $A_{\mathcal S}$ by
$$A_{\mathcal S}f(x)=\sum_{Q\in {\mathcal S}}f_Q\chi_Q(x).$$
Further, consider the operator $A_{\mathcal S,\eta}$ defined by
$$A_{\mathcal S,\eta}f(x)=\eta A_{\mathcal S}f(x).$$
Denote by $A_{\mathcal S,\eta}^m$ the $m$-th iteration of $A_{\mathcal S,\eta}$.

\begin{lemma}\label{dual1} Let $m\in {\mathbb N}$. For every bounded and compactly supported $f$ and for every $b\in BMO_{\eta}$,
there exist $3^n$ sparse families ${\mathcal S}_j\subset {\mathscr D_j}$ such that for every $g\ge 0$,
\begin{equation}\label{fv}
|\langle T_b^mf,g\rangle|\lesssim \|b\|_{BMO_{\eta}}^m\sum_{j=1}^{3^n}\langle A_{{\mathcal S}_j}(A_{{\mathcal S}_j,\eta}^m|f|),g\rangle.
\end{equation}
\end{lemma}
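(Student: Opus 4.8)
\textbf{Proof proposal for Lemma~\ref{dual1}.} The plan is to start from the pointwise sparse bound (\ref{point}) and pair it against a non-negative $g$, so that
$$
|\langle T_b^mf,g\rangle|\lesssim \sum_{j=1}^{3^n}\sum_{k=0}^m\sum_{Q\in{\mathcal S}_j}\Big(\frac{1}{|Q|}\int_Q|b-b_Q|^k|f|\Big)\int_Q|b-b_Q|^{m-k}g.
$$
The idea is then to absorb all the factors $|b-b_Q|^{\ell}$ using (\ref{sparses}), at the cost of enlarging each sparse family. First I would fix $j$, replace ${\mathcal S}_j$ by the enlarged sparse family $\tilde{\mathcal S}_j$ supplied by (\ref{sparses}) (still a sparse family with constant depending only on $n$), and work with a single sparse family ${\mathcal S}:=\tilde{\mathcal S}_j$ from then on. The main point is the following iterable claim: for $b\in BMO_\eta$, a sparse family ${\mathcal S}$ as in (\ref{sparses}), a non-negative $h$, an integer $\ell\ge 1$ and any cube $Q\in{\mathcal S}$,
$$
\frac{1}{|Q|}\int_Q|b-b_Q|^{\ell}\,h\;\lesssim\;\|b\|_{BMO_\eta}\,\frac{1}{|Q|}\int_Q|b-b_Q|^{\ell-1}\,\eta\,A_{{\mathcal S}(Q)}h,
$$
where ${\mathcal S}(Q)=\{P\in{\mathcal S}:P\subseteq Q\}$ and $A_{{\mathcal S}(Q)}h=\sum_{P\in{\mathcal S},P\subseteq Q}h_P\chi_P$.

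To prove the claim, I would insert (\ref{sparses}) for one copy of $|b-b_Q|$, obtaining
$$
\frac{1}{|Q|}\int_Q|b-b_Q|^{\ell}h\le 2^{n+2}\sum_{P\in{\mathcal S},\,P\subseteq Q}\Big(\frac{1}{|P|}\int_P|b-b_P|\Big)\frac{1}{|Q|}\int_P|b-b_Q|^{\ell-1}h,
$$
then bound $\frac1{|P|}\int_P|b-b_P|\le\|b\|_{BMO_\eta}\,\eta_P$ by the definition of $BMO_\eta$, and finally use that $\eta_P\chi_P\le \eta\, A_{{\mathcal S}(Q)}(|b-b_Q|^{\ell-1}h)$ pointwise on $P$ — wait, more carefully: I would reorganize the double sum as a single integral, writing $\sum_{P}\eta_P\big(\frac1{|Q|}\int_P|b-b_Q|^{\ell-1}h\big)\le\frac1{|Q|}\int_Q\big(|b-b_Q|^{\ell-1}h\big)\big(\sum_P\eta_P\chi_P\big)$, but since $\eta$ need not be constant on $P$ this last step needs the average $\eta_P$ to be replaced. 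The clean way is to absorb the factor $\frac1{|P|}\int_P|b-b_P|\lesssim\|b\|_{BMO_\eta}\eta_P$ into the operator $A_{{\mathcal S}(Q)}$ acting on $h$: since $A_{{\mathcal S}(Q)}h=\sum_{P\subseteq Q}h_P\chi_P$, we get $\sum_{P\subseteq Q}\eta_P h_P\chi_P = $ something comparable to $\eta\,(\text{but averaged})$. To avoid this mismatch I will instead keep $\eta$ outside and estimate $\eta_P=\frac1{|P|}\int_P\eta\le (\eta\,A_{{\mathcal S}(Q)}\mathbf 1)_P$-type bounds; concretely, $\sum_{P\subseteq Q}\eta_P\Big(\frac1{|Q|}\int_P g_P'\Big)$ with $g'=|b-b_Q|^{\ell-1}h$ is $\le\frac1{|Q|}\int_Q (A_{{\mathcal S}(Q)}g')\,\eta$ once one notes $\eta_P|P|=\int_P\eta$ and $A_{{\mathcal S}(Q)}g'\ge g'_P$ on $P$ — this is the standard duality manipulation for sparse operators, $\langle A_{\mathcal S}g',\eta\rangle=\langle g',A_{\mathcal S}^*\eta\rangle$ with $A_{\mathcal S}^*$ again a sparse operator, so $\sum_P\eta_P g'_P|P|=\int_Q g'\,(A_{{\mathcal S}(Q)}^*\eta)$ and $A^*_{\mathcal S}=A_{\mathcal S}$ since $A_{\mathcal S}$ is self-adjoint. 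This yields the claim with $A_{{\mathcal S}(Q),\eta}$ in place of $\eta A_{{\mathcal S}(Q)}$ by definition of the operator $A_{{\mathcal S},\eta}$.

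Granting the claim, I would iterate it $m-k$ times on the inner factor $\frac1{|Q|}\int_Q|b-b_Q|^{m-k}g$ (absorbing one power of $|b-b_Q|$ and one factor $\|b\|_{BMO_\eta}$ each time) to reach
$$
\frac1{|Q|}\int_Q|b-b_Q|^{m-k}g\lesssim\|b\|_{BMO_\eta}^{m-k}\frac1{|Q|}\int_Q\big(A_{{\mathcal S}(Q),\eta}^{m-k}g\big),
$$
and similarly iterate $k$ times on the factor involving $f$ to get $\frac1{|Q|}\int_Q|b-b_Q|^k|f|\lesssim\|b\|_{BMO_\eta}^k\,(A_{{\mathcal S}(Q),\eta}^k|f|)_Q$; since each $A_{{\mathcal S}(Q),\eta}^j$ here is bounded above by the global $A_{{\mathcal S},\eta}^j$ on $Q$, these can be replaced by the operators associated with the full family ${\mathcal S}$. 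Plugging both back and summing over $Q\in{\mathcal S}$ gives, for each $j$ and each $k$, a bound by $\|b\|_{BMO_\eta}^m\sum_{Q\in{\mathcal S}}(A_{{\mathcal S},\eta}^k|f|)_Q\,\langle A_{{\mathcal S},\eta}^{m-k}g\cdot\chi_Q\rangle$; absorbing the outer average $\frac1{|Q|}\int_Q\cdots$ into a single sparse operator $A_{\mathcal S}$ acting on $A_{{\mathcal S},\eta}^k|f|$ and then moving all $m-k$ remaining copies of $A_{{\mathcal S},\eta}$ onto $f$ via the self-adjointness of $A_{\mathcal S}$ and $A_{{\mathcal S},\eta}$ (using that the weight $\eta$ is the same on both sides) collapses the sum over $k$ into the single term $\langle A_{\mathcal S}(A_{{\mathcal S},\eta}^m|f|),g\rangle$, up to the implicit constant. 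Renaming $\tilde{\mathcal S}_j$ back to ${\mathcal S}_j$ yields (\ref{fv}).

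The main obstacle I anticipate is bookkeeping rather than any single hard estimate: one must be careful that when (\ref{sparses}) is applied repeatedly the relevant powers $|b-b_Q|$ are always measured on the \emph{current} cube $Q$ (the one from the outer sparse family), not on the subcubes $P$; this is exactly what the specific form of (\ref{sparses}) — which produces $b_P$ inside but is stated relative to a fixed $Q$ containing all the $P$'s — is designed to handle, via the telescoping $|b-b_Q|\le|b-b_P|+\sum|b_{P}-b_{P'}|$ built into its derivation in \cite{LOR1}. The other point requiring care is the interchange between the "multiply by $\eta$ then average" operation defining $A_{{\mathcal S},\eta}$ and ordinary sparse duality; because $\eta$ is not constant on cubes, one cannot pull $\eta$ through an average freely, but one can always pass it across a pairing, and organizing the argument entirely in terms of pairings $\langle\cdot,\cdot\rangle$ (as in the statement) rather than pointwise bounds makes every such step legitimate. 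This is precisely the reduction already carried out in \cite{LOR1,LOR2}, so the proof amounts to recording that argument with the weight $\eta$ tracked throughout.
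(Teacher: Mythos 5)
Your overall architecture matches the paper's: start from (\ref{point}) paired with $g$, enlarge each ${\mathcal S}_j$ via (\ref{sparses}), absorb the powers of $|b-b_Q|$ at the cost of factors of $\|b\|_{BMO_\eta}$, and collapse the sum over $k$ by moving $\eta$ across the pairing and using self-adjointness of $A_{\mathcal S}$ (note $A_{{\mathcal S},\eta}$ itself is \emph{not} self-adjoint --- its adjoint is $f\mapsto A_{\mathcal S}(\eta f)$ --- but the mechanism you describe, shifting the multiplication by $\eta$ from one side of the inner product to the other and then using $A_{\mathcal S}^*=A_{\mathcal S}$, is exactly what is needed, so that final step is fine).

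The gap is in your iterable claim. Inserting (\ref{sparses}) once and using $\frac{1}{|P|}\int_P|b-b_P|\le\|b\|_{BMO_\eta}\eta_P$ gives
$$
\int_Q|b-b_Q|^{\ell}h\;\lesssim\;\|b\|_{BMO_\eta}\int_Q|b-b_Q|^{\ell-1}\,h\,\Big(\sum_{P\in{\mathcal S},\,P\subseteq Q}\eta_P\chi_P\Big),
$$
i.e.\ the sparse averaging falls on $\eta$, producing the fixed function $A_{{\mathcal S}(Q)}\eta=\sum_P\eta_P\chi_P$ as a multiplier; your duality manipulation $\sum_P\eta_P g'_P|P|=\langle g',A_{{\mathcal S}(Q)}\eta\rangle$ proves precisely this version. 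But the version you display and then iterate has $\eta\,A_{{\mathcal S}(Q)}h$ in the integrand, i.e.\ the operator applied to $h$; because of the third factor $|b-b_Q|^{\ell-1}$ one cannot pass from $\int F\,h\,(A\eta)$ to $\int F\,\eta\,(Ah)$, and that swap is never justified. Iterating the \emph{true} one-step bound yields $\int_Q h\,(A_{{\mathcal S}(Q)}\eta)^{\ell}$, not $\int_Q A^{\ell}_{{\mathcal S}(Q),\eta}h$, and the missing ingredient --- the one genuinely combinatorial step in the paper's proof --- is the dyadic nesting expansion
$$
\Big(\sum_{P\in{\mathcal S},\,P\subseteq Q}\eta_P\chi_P\Big)^{\ell}\;\lesssim\;\sum_{P_\ell\subseteq\dots\subseteq P_1\subseteq Q,\ P_i\in{\mathcal S}}\eta_{P_1}\cdots\eta_{P_\ell}\chi_{P_\ell},
$$
which uses that two intersecting dyadic cubes are nested and is exactly what identifies $\int_Qh(A_{{\mathcal S}(Q)}\eta)^{\ell}$ with (a quantity controlled by) $\int_QA^{\ell}_{{\mathcal S},\eta}h$. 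Without this step your iteration does not land on the iterated operator $A^{m}_{{\mathcal S},\eta}$ appearing in (\ref{fv}); with it, your argument becomes the paper's.
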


This result is contained implicitly in \cite{LOR1} for $m=1$ and in \cite{LOR2} for $m\ge 1$. We outline briefly its proof for the sake of the completeness.

\begin{proof}[Proof of Lemma \ref{dual1}]
By (\ref{point}),
\begin{equation}\label{1dual}
|\langle T_b^mf,g\rangle|\lesssim \sum_{j=1}^{3^{n}}\sum_{Q\in\mathcal{S}_{j}}\sum_{k=0}^{m}
(|b-b_Q|^{k}|f|)_Q(|b-b_Q|^{m-k}|g|)_Q|Q|.
\end{equation}

Next, by (\ref{sparses}), there exist extended families $\tilde{\mathcal{S}_j}$ such that for a.e. $x\in Q$,
$$
|b(x)-b_{Q}|\lesssim \|b\|_{BMO_{\eta}}\sum_{P\in\tilde{\mathcal{S}_j},\ P\subseteq Q}\eta_P\chi_P(x).
$$
Since the cubes from $\tilde{\mathcal S}_j$ are dyadic, for every $l\in {\mathbb N}$,
$$
\Biggl(\sum_{P\in\tilde{\mathcal{S}_j},\ P\subseteq Q}\eta_P\chi_{P}\Biggr)^{l}
\lesssim \sum_{P_l\subseteq P_{l-1}\subseteq\dots\subseteq P_1\subseteq Q,\, P_i\in {\tilde{\mathcal S}_j}}\eta_{P_1}\eta_{P_2}\dots \eta_{P_l}\chi_{P_l}.
$$
Combining this estimate with the previous yields
$$
\int_{Q}|b-b_Q|^l|h|\lesssim \|b\|_{BMO_{\eta}}^l\int_{Q}A_{\tilde{\mathcal S_j},\eta}^l|h|.
$$

Therefore, replacing the right-hand side of (\ref{1dual}) by a larger sum over $\tilde{{\mathcal S}_j}$ and redenoting $\tilde{{\mathcal S}_j}$ again by ${\mathcal S}_j$, we obtain that
$$|\langle T_b^mf,g\rangle|\lesssim \|b\|_{BMO_{\eta}}^m\sum_{j=1}^{3^{n}}\sum_{Q\in\mathcal{S}_{j}}\sum_{k=0}^{m}
(A_{{\mathcal S_j},\eta}^{k}|f|)_Q(A_{{\mathcal S_j},\eta}^{m-k}g)_Q|Q|,$$
where $A_{{\mathcal S_j},\eta}^{0}|f|=|f|$. It remains to observe that, since $A_{\mathcal S}$ is self-adjoint,
\begin{eqnarray*}
&&\sum_{Q\in\mathcal{S}}(A_{{\mathcal S},\eta}^{k}|f|)_Q(A_{{\mathcal S},\eta}^{m-k}g)_Q|Q|=\langle A_{\mathcal S}(A_{{\mathcal S},\eta}^{k}|f|),A_{{\mathcal S},\eta}^{m-k}g\rangle\\
&&=\langle \eta A_{\mathcal S}(A_{{\mathcal S},\eta}^{k}|f|),A_{\mathcal S}(A_{{\mathcal S},\eta}^{m-k-1}g)\rangle=
\langle A_{\mathcal S}(A_{{\mathcal S},\eta}^{k+1}|f|),A_{{\mathcal S},\eta}^{m-k-1}g\rangle\\
&&=\dots=\langle A_{\mathcal S}(A_{{\mathcal S},\eta}^{m}|f|),g\rangle.
\end{eqnarray*}
This along with the previous estimate completes the proof.
\end{proof}

Lemma \ref{dual1} says that if $b\in BMO$ (that is, $\eta=1$), then $T_b^m$ is controlled by $A_{\mathcal S}^{m+1}$.
Let us show that in this case the right-hand side of (\ref{fv}) can be further transformed.

Given $m\in {\mathbb N}$, define the operator $T_{m}$ by
$$T_{m}f(x)=\sum_{Q\in {\mathcal S}}\left(\frac{1}{|Q|}\sum_{P_1\in {\mathcal S}:P_1\subseteq Q}\dots\sum_{P_m\in {\mathcal S}:P_m\subseteq P_{m-1}}\int_{P_m}f\right)\chi_{Q}(x),$$
where we assume that $P_0=Q$. Observe that the adjoint operator $T_m^*$ is given by
$$T_m^*f(x)=\sum_{Q\in {\mathcal S}}\left(\sum_{P_1\in {\mathcal S}:Q\subseteq P_1}\dots\sum_{P_m\in {\mathcal S}:P_{m-1}\subseteq P_{m}}f_{P_m}\right)\chi_{Q}(x)$$
(this can be easily checked by changing the summation and switching the indices).

\begin{lemma}\label{dual2} For all $f,g\ge 0$ and for every $m\in {\mathbb N}$,
$$
\langle A_{\mathcal S}^{m+1}f,g\rangle \lesssim \langle T_{m}f,g\rangle+\langle T_{m}^*f,g\rangle.
$$
\end{lemma}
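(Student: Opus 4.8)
The plan is to expand $A_{\mathcal S}^{m+1}f = A_{\mathcal S}(A_{\mathcal S}(\cdots A_{\mathcal S} f))$ and to use the self-adjointness of $A_{\mathcal S}$ to rewrite $\langle A_{\mathcal S}^{m+1}f,g\rangle$ as a multilinear sum over $(m+1)$-tuples of cubes from $\mathcal S$. Concretely, iterating the definition of $A_{\mathcal S}$, one gets
$$\langle A_{\mathcal S}^{m+1}f,g\rangle = \sum_{R_0,R_1,\dots,R_m\in{\mathcal S}} c(R_0,\dots,R_m)\, f_{R_m}\, g_{R_0}\,|R_0|,$$
where the coefficient $c$ is a product of normalized measures $|R_i\cap R_{i-1}|/|R_{i-1}|$ coming from averaging $\chi_{R_i}$ over $R_{i-1}$. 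Since all cubes live in the same dyadic lattice $\mathscr D$, any two that contribute with a nonzero coefficient are nested; hence the only tuples that survive are the \emph{chains} $R_0,\dots,R_m$ in which consecutive cubes are comparable. I would then split the sum according to where the chain changes ``direction'': fix the position $0\le j\le m$ of the smallest cube in the chain, so that $R_j\subseteq R_{j-1}\subseteq\dots\subseteq R_0$ on one side and $R_j\subseteq R_{j+1}\subseteq\dots\subseteq R_m$ on the other.

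Next I would estimate each of these $m+1$ pieces by one of $\langle T_m f,g\rangle$ or $\langle T_m^*f,g\rangle$. For a chain with $R_j$ the minimal cube, the averages $f_{R_m}$ and $g_{R_0}$ are attached to the two ends while the geometric coefficients only shrink things; bounding $|R_i\cap R_{i-1}|/|R_{i-1}|\le 1$ for the appropriate indices and keeping one genuine average on each side, the piece is dominated (up to a dimensional constant absorbing the $3^n$-type overcounting) by a sum of the form
$$\sum_{Q\in{\mathcal S}}\Big(\sum_{P_1\subseteq Q}\cdots\sum_{P_k\subseteq P_{k-1}}\int_{P_k}f\Big)_{\!} \Big(\sum_{P_1'\supseteq Q}\cdots\sum_{P_{m-k}'\supseteq P_{m-k-1}'}g_{P_{m-k}'}\Big)\chi_Q ,$$
which after pairing with $g$ (resp.\ $f$) is exactly an inner product of the type $\langle T_k(\text{something}), T^*_{m-k}(\text{something})\rangle$. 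The cleanest route is probably to observe that the ``down'' part of the chain rooted at $Q$ reproduces $T_k$ (with $Q$ as the top cube) and the ``up'' part reproduces $T^*_{m-k}$, and then to note that $T_k\le T_m$ pointwise-in-structure isn't quite right — so instead I'd handle the two extreme cases $j=0$ (fully increasing chain: gives $T_m^*$) and $j=m$ (fully decreasing chain: gives $T_m$) as the model, and for intermediate $j$ reduce to these by absorbing the ``short leg'' into a single extra maximal-type average, using that $\mathcal S$ is sparse and that for a chain $P\subseteq Q$ one has $|P|^{-1}\int_P f\le$ an average of the same type appearing in $T_m$ after relabeling. The sparseness constant and the number of families enter only through universal constants, as the paper has already reduced everything to a single sparse family.

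The main obstacle I anticipate is the bookkeeping for the intermediate $j$: a chain that first goes down and then up (through the pivot $R_j$) is not literally of the form defining $T_m$ or $T_m^*$, and one must show that discarding the geometric factors on the shorter leg and re-expressing the longer leg still lands inside $\langle T_m f,g\rangle+\langle T_m^* f,g\rangle$ rather than some third operator. I expect this works because, for $P\subseteq Q$ both in $\mathcal S$, the quantity $f_P$ is itself one of the averages summed inside $T_m f$ evaluated at the top cube $P$ (not $Q$), so a down-then-up chain through $R_j$ can be rebooked as a pure up-chain starting from $R_j$ with $f_{R_m}$ replaced by $(A_{\mathcal S}^{j}f)_{R_j}$, and $A_{\mathcal S}^{j}f\lesssim T_{j}f+T_j^* f\lesssim T_m f + T_m^* f$ by induction on $m$ — so the whole lemma can in fact be proved by induction, peeling off one $A_{\mathcal S}$ at a time and invoking the case $m-1$. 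I would therefore organize the final write-up as an induction on $m$: the base case $m=1$ is the identity $\langle A_{\mathcal S}^2 f,g\rangle=\langle A_{\mathcal S}f,A_{\mathcal S}g\rangle$ expanded over nested pairs and split at the smaller cube, and the inductive step writes $A_{\mathcal S}^{m+1}f=A_{\mathcal S}(A_{\mathcal S}^{m}f)$, applies the inductive bound to $A_{\mathcal S}^{m}f$, and checks that composing with one more $A_{\mathcal S}$ sends $T_{m-1},T_{m-1}^*$ into $T_m+T_m^*$ — the latter being a direct unravelling of the definitions of $T_m$ and $T_m^*$.
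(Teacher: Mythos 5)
Your overall strategy coincides with the paper's: induction on $m$, with the inductive step reduced (via self-adjointness of $A_{\mathcal S}$) to showing that composing $T_{k-1}$ or $T_{k-1}^*$ with one more copy of $A_{\mathcal S}$ lands inside $T_k+T_k^*$. You also correctly name the obstruction: chains of cubes that descend and then ascend through a pivot are not of the form defining $T_m$ or $T_m^*$. The gap is in how you dispose of that obstruction. The composition step is emphatically not ``a direct unravelling of the definitions'': writing $\int_{P_{k-1}}A_{\mathcal S}g=\sum_{P_k\subseteq P_{k-1}}\int_{P_k}g+\sum_{P_k\supset P_{k-1}}\frac{|P_{k-1}|}{|P_k|}\int_{P_k}g$, the first sum assembles into $T_kg$, but the second produces precisely a mixed (down-then-up) chain, i.e.\ a genuinely new correction term $a_k(g,Q)$. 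The paper beats this down by a further recursion: rearranging the two innermost sums and using the sparseness bound $\sum_{P'\in\mathcal S,\,P'\subset P}|P'|\lesssim|P|$ (and, for the adjoint case, the dyadic bound $\sum_{Q\in\mathscr D,\,Q\supseteq P}|Q|^{-1}\lesssim|P|^{-1}$) gives $\sum_Qa_k(g,Q)\chi_Q\lesssim T_{k-1}g+\sum_Qa_{k-1}(g,Q)\chi_Q$, and iterating $k$ times yields $\lesssim\sum_{j\le k-1}T_jg+T_1^*g\lesssim T_kg+T_k^*g$. None of this machinery appears in your sketch, and it is where essentially all the work of the lemma lies.

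Your proposed shortcut for the mixed chains --- rebook a down-then-up chain with pivot $R_j$ as a pure up-chain applied to $(A_{\mathcal S}^{j}f)_{R_j}$ and invoke the case $j<m$ --- does not close. Substituting the inductive bound $A_{\mathcal S}^{j}f\lesssim T_{j-1}f+T_{j-1}^*f$ inside the up-leg leaves you with compositions of the form $T_{m-j}^*(T_{j-1}f)$ and $T_{m-j}^*(T_{j-1}^*f)$, and controlling those by $T_mf+T_m^*f$ is exactly the same composition problem you started from, not a consequence of the induction hypothesis. The missing ingredient is the recursive reduction of the correction terms together with the two summation facts (sparseness for sums over descending cubes, the Carleson-type bound for sums over ascending cubes); once those are supplied, your induction scheme becomes the paper's proof.
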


\begin{proof} The proof is by induction on $m$. In the case $m=1$ we have
\begin{eqnarray*}
&&\langle A_{\mathcal S}^{2}f,g\rangle=\sum_{Q\in {\mathcal S}}\sum_{P\in {\mathcal S}}
\frac{|P\cap Q|}{|P|}g_{Q}\int_{P}f\\
&&=\sum_{Q\in {\mathcal S}}\sum_{P\subseteq Q}g_{Q}\int_{P}f+\sum_{Q\in {\mathcal S}}\sum_{P:Q\subset P}f_{P}\int_{Q}g\le \langle T_{1}f,g\rangle+\langle T_{1}^*f,g\rangle.
\end{eqnarray*}

Suppose that the lemma is true for $m=k-1$, and let us prove it for $m=k$, $k\ge 2$. Since $A_{\mathcal S}$ is self-adjoint and by the inductive assumption,
\begin{eqnarray*}
\langle A_{\mathcal S}^{k+1}f,g\rangle &=& \langle A_{\mathcal S}^{k}f,A_{\mathcal S}g\rangle\\
&\lesssim& \langle T_{k-1}f,A_{\mathcal S}g\rangle+\langle T_{k-1}^*f,A_{\mathcal S}g\rangle\\
&=& \langle f,T_{k-1}^*A_{\mathcal S}g\rangle+\langle f,T_{k-1}A_{\mathcal S}g\rangle.
\end{eqnarray*}

It follows from this that the proof will be completed if we show that
\begin{equation}\label{twopoint}
\max\big(T_{k-1}A_{\mathcal S}g, T_{k-1}^*A_{\mathcal S}g\big)\lesssim T_kg+T_k^*g.
\end{equation}

Consider first $T_{k-1}A_{\mathcal S}g$. We have
\begin{eqnarray}
\int_{P_{k-1}}A_{\mathcal S}g&=&\sum_{P_k\in {\mathcal S}}\frac{|P_k\cap P_{k-1}|}{|P_k|}\int_{P_k}g\label{comp}\\
&=&\sum_{P_k\in {\mathcal S}: P_k\subseteq P_{k-1}}\int_{P_k}g+\sum_{P_k\in {\mathcal S}: P_{k-1}\subset P_k}\frac{|P_{k-1}|}{|P_k|}\int_{P_k}g.\nonumber
\end{eqnarray}
Therefore,
\begin{equation}\label{fp}
T_{k-1}A_{\mathcal S}g\le T_kg+\sum_{Q\in {\mathcal S}}a_k(g,Q)\chi_{Q},
\end{equation}
where
\begin{equation}\label{cm}
a_k(g,Q)=\frac{1}{|Q|}\sum_{P_1\in {\mathcal S}:P_1\subseteq Q}\dots\sum_{P_{k-1}\in {\mathcal S}:P_{k-1}\subseteq P_{k-2}}
\sum_{P_k\in {\mathcal S}: P_{k-1}\subset P_k}\frac{|P_{k-1}|}{|P_k|}\int_{P_k}g.
\end{equation}

Transform the last two sums in (\ref{cm}) as follows:
\begin{eqnarray*}
\sum_{P_{k-1}\subseteq P_{k-2}}
\sum_{P_k: P_{k-1}\subset P_k}&=&\sum_{P_{k-1}\subseteq P_{k-2}}
\sum_{P_k: P_{k-1}\subset P_k\subseteq P_{k-2}}+
\sum_{P_{k-1}\subseteq P_{k-2}}
\sum_{P_k: P_{k-2}\subset P_k}\\
&=&\sum_{P_{k}\subseteq P_{k-2}}\sum_{P_{k-1}\subset P_k}+\sum_{P_{k-1}\subseteq P_{k-2}}
\sum_{P_k: P_{k-2}\subset P_k}.
\end{eqnarray*}

By the sparseness,
$$
\sum_{P_{k}\subseteq P_{k-2}}\sum_{P_{k-1}\subset P_k}
\frac{|P_{k-1}|}{|P_k|}\int_{P_k}g\lesssim \sum_{P_{k}\subseteq P_{k-2}}\int_{P_k}g
$$
and
$$
\sum_{P_{k-1}\subseteq P_{k-2}}
\sum_{P_k: P_{k-2}\subset P_k}\frac{|P_{k-1}|}{|P_k|}\int_{P_k}g\lesssim \sum_{P_k: P_{k-2}\subset P_k}\frac{|P_{k-2}|}{|P_k|}\int_{P_k}g.
$$

It follows from this that
\begin{equation}\label{iter}
\sum_{Q\in {\mathcal S}}a_k(g,Q)\chi_{Q}\lesssim T_{k-1}g+\sum_{Q\in {\mathcal S}}a_{k-1}(g,Q)\chi_{Q}
\end{equation}
and
$$\sum_{Q\in {\mathcal S}}a_{2}(g,Q)\chi_{Q}\lesssim T_1g+T_1^*g.$$
Therefore, iterating (\ref{iter}) yields
$$
\sum_{Q\in {\mathcal S}}a_k(g,Q)\chi_{Q}\lesssim\sum_{j=1}^{k-1}T_jg+T_1^*g\lesssim T_kg+T_k^*g,
$$
which, along with (\ref{fp}), proves the first part of (\ref{twopoint}).

The proof of the second part of (\ref{twopoint}) is similar. Consider $T_{k-1}^*A_{\mathcal S}g$.  Applying (\ref{comp}),
we obtain
\begin{equation}\label{fp1}
T_{k-1}^*A_{\mathcal S}g\le T_k^*g+\sum_{Q\in {\mathcal S}}b_k(g,Q)\chi_{Q},
\end{equation}
where
\begin{equation}\label{bm}
b_k(g,Q)=\frac{1}{|Q|}
\sum_{P_1\in {\mathcal S}:Q\subseteq P_1}\dots\sum_{P_{k-1}\in {\mathcal S}:P_{k-2}\subseteq P_{k-1}}
\sum_{P_k\in {\mathcal S}: P_k\subseteq P_{k-1}}\frac{1}{|P_{k-1}|}\int_{P_k}g
\end{equation}

Transform the last two sums in (\ref{bm}) as follows:
\begin{eqnarray*}
\sum_{P_{k-1}:P_{k-2}\subseteq P_{k-1}}
\sum_{P_k\subseteq P_{k-1}}&=&\sum_{P_{k-1}:P_{k-2}\subseteq P_{k-1}}\sum_{P_k\subseteq P_{k-2}}+
\sum_{P_{k-1}:P_{k-2}\subseteq P_{k-1}}\sum_{P_{k-2}\subset P_k\subseteq P_{k-1}}\\
&=&\sum_{P_{k-1}:P_{k-2}\subseteq P_{k-1}}\sum_{P_k\subseteq P_{k-2}}+
\sum_{P_{k}:P_{k-2}\subset P_{k}}\sum_{P_{k-1}:P_k\subseteq P_{k-1}}.
\end{eqnarray*}

Using the standard fact that $\sum_{Q\in {\mathscr D}:P\subseteq Q}\frac{1}{|Q|}\lesssim \frac{1}{|P|}$, we obtain that
$$
\sum_{P_{k-1}:P_{k-2}\subseteq P_{k-1}}\sum_{P_k\subseteq P_{k-2}}\frac{1}{|P_{k-1}|}\int_{P_k}g\lesssim
\sum_{P_k\subseteq P_{k-2}}
\frac{1}{|P_{k-2}|}\int_{P_k}g
$$
and
$$
\sum_{P_{k}:P_{k-2}\subset P_{k}}\sum_{P_{k-1}:P_k\subseteq P_{k-1}}\frac{1}{|P_{k-1}|}\int_{P_k}g
\lesssim \sum_{P_{k}:P_{k-2}\subset P_{k}}\frac{1}{|P_{k}|}\int_{P_k}g.
$$

It follows from this that
\begin{equation}\label{iter1}
\sum_{Q\in {\mathcal S}}b_k(g,Q)\chi_{Q}\lesssim T_{k-1}^*g+\sum_{Q\in {\mathcal S}}b_{k-1}(g,Q)\chi_{Q}
\end{equation}
and
$$\sum_{Q\in {\mathcal S}}b_{2}(g,Q)\chi_{Q}\lesssim T_1g+T_1^*g.$$
Therefore, iterating (\ref{iter1}) yields
$$
\sum_{Q\in {\mathcal S}}b_k(g,Q)\chi_{Q}\lesssim\sum_{j=1}^{k-1}T_j^*g+T_1g\lesssim T_kg+T_k^*g,
$$
which, along with (\ref{fp1}), proves the second part of (\ref{twopoint}).

This completes the proof of (\ref{twopoint}), and therefore, the lemma is proved.
\end{proof}

We end this section with a lemma which will be quite useful for our purposes.
\begin{lemma}\label{Lem:Am+1LlogLm}
Let $T$ be a Calder\'on-Zygmund operator with Dini-continuous kernel.
Let $m\in {\mathbb Z}_+$ and $p>1$. Let $u,v$ be weights. Then
$$
\|T_b^m\|_{L^p(v)\to L^p(u)}\lesssim
\|A_{L(\log L)^m,{\mathcal S}}\|_{L^p(v)\to L^p(u)}
+\|A_{L(\log L)^m,{\mathcal S}}\|_{L^{p'}(u^{1-p'})\to L^{p'}(v^{1-p'})}.
$$
\end{lemma}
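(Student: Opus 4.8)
The plan is to reduce the two-weight estimate for $T_b^m$ to the two-weight estimates for the model sparse operator $A_{L(\log L)^m,{\mathcal S}}$ and its dual via the chain of sparse bounds established in Lemmas \ref{dual1} and \ref{dual2}. First I would apply Lemma \ref{dual1} with $\eta=1$, i.e.\ $b\in BMO$, to obtain, for bounded compactly supported $f$ and $g\ge 0$,
$$
|\langle T_b^mf,g\rangle|\lesssim\|b\|_{BMO}^m\sum_{j=1}^{3^n}\langle A_{{\mathcal S}_j}^{m+1}|f|,g\rangle,
$$
so it suffices to bound $\langle A_{\mathcal S}^{m+1}|f|,g\rangle$ for a single sparse family. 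Then I would invoke Lemma \ref{dual2} to write $\langle A_{\mathcal S}^{m+1}|f|,g\rangle\lesssim\langle T_m|f|,g\rangle+\langle T_m^*|f|,g\rangle$, and by adjointness $\langle T_m^*|f|,g\rangle=\langle|f|,T_mg\rangle$, so everything is controlled by the single positive linear operator $T_m$ tested against $|f|$ and against $g$.

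The next step is to dominate $T_m$ pointwise by $A_{L(\log L)^m,{\mathcal S}}$. Looking at the definition
$$
T_mf(x)=\sum_{Q\in{\mathcal S}}\Bigl(\frac{1}{|Q|}\sum_{P_1\subseteq Q}\cdots\sum_{P_m\subseteq P_{m-1}}\int_{P_m}f\Bigr)\chi_Q(x),
$$
the inner $m$-fold nested sum over sparse subcubes of $Q$, with $f\ge0$, should be estimated by $|Q|\,\|f\|_{L(\log L)^m,Q}$ up to a dimensional constant. The mechanism is the iterated-maximal-function bound (\ref{maxlog}): the innermost sum $\sum_{P_m\subseteq P_{m-1}}\int_{P_m}f\lesssim\int_{P_{m-1}}M(f\chi_Q)$ by sparseness, and iterating this $m$ times (each nested sparse sum absorbing one more copy of $M$) gives $\sum_{P_1\subseteq Q}\cdots\int_{P_m}f\lesssim\int_Q M^m(f\chi_Q)\lesssim|Q|\,\|f\|_{L(\log L)^m,Q}$ by (\ref{maxlog}). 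Hence $T_mf\lesssim A_{L(\log L)^m,{\mathcal S}}f$ pointwise, and the same for the pairing with $g$.

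Putting this together, $|\langle T_b^mf,g\rangle|\lesssim\|b\|_{BMO}^m\langle A_{L(\log L)^m,{\mathcal S}}|f|,g\rangle$ after summing over the finitely many $j$ and redenoting. Now I apply H\"older: $\langle A_{L(\log L)^m,{\mathcal S}}|f|,g\rangle\le\|A_{L(\log L)^m,{\mathcal S}}|f|\|_{L^p(u)}\|g\|_{L^{p'}(u^{1-p'})}$, giving the bound $\|A_{L(\log L)^m,{\mathcal S}}\|_{L^p(v)\to L^p(u)}\|f\|_{L^p(v)}\|g\|_{L^{p'}(u^{1-p'})}$; taking the supremum over $\|g\|_{L^{p'}(u^{1-p'})}\le1$ yields $\|T_b^mf\|_{L^p(u)}\lesssim\|b\|_{BMO}^m\|A_{L(\log L)^m,{\mathcal S}}\|_{L^p(v)\to L^p(u)}\|f\|_{L^p(v)}$ for nice $f$, hence for all $f$ by density. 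Alternatively, estimating the pairing by $\||f|\|_{L^p(v)}\|A_{L(\log L)^m,{\mathcal S}}g\|_{L^{p'}(v^{1-p'})}$ and using self-adjointness of $A_{L(\log L)^m,{\mathcal S}}$ (it is a positive sum of averaging-type pieces in the Orlicz sense — more precisely, one passes through $T_m$ and $T_m^*$, which are genuine adjoints) produces the dual term $\|A_{L(\log L)^m,{\mathcal S}}\|_{L^{p'}(u^{1-p'})\to L^{p'}(v^{1-p'})}$; the $\min$ of the two, or just their sum, is what we want. The main obstacle I anticipate is the bookkeeping in the pointwise domination $T_m\lesssim A_{L(\log L)^m,{\mathcal S}}$: one must carefully track that each layer of the nested sparse sum contributes exactly one iteration of $M$ and that the sparseness constant only deteriorates by a dimensional factor at each step, so that after $m$ iterations (\ref{maxlog}) applies with exponent $m$; the Orlicz duality used to split the pairing into the two stated operator norms is routine by comparison.
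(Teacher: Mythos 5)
Your proposal is correct and follows essentially the same route as the paper: Lemma \ref{dual1} with $\eta=1$, Lemma \ref{dual2} to reduce to $T_m$ and $T_m^*$, the sparseness/iterated-maximal-function argument with (\ref{maxlog}) to dominate $T_m$ pointwise by $A_{L(\log L)^m,{\mathcal S}}$, and then H\"older/duality to produce the two operator norms. One small caution: your aside that the \emph{minimum} of the two norms would suffice is wrong (and your intermediate claim $|\langle T_b^mf,g\rangle|\lesssim\|b\|_{BMO}^m\langle A_{L(\log L)^m,{\mathcal S}}|f|,g\rangle$ momentarily drops the $T_m^*$ piece), since $A_{L(\log L)^m,{\mathcal S}}$ is not self-adjoint and the contributions of $T_m$ and $T_m^*$ must be estimated separately, so both norms, i.e.\ their sum, are genuinely needed --- which is what the lemma states and what your final accounting in fact uses.
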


\begin{proof}
First we note that by Lemmata \ref{dual1}, \ref{dual2} and by duality,
\begin{equation*}
\|T_b^m\|_{L^p(v)\to L^p(u)}\lesssim  \|T_m\|_{L^p(v)\to L^p(u)}
+\|T_m\|_{L^{p'}(u^{1-p'})\to L^{p'}(v^{1-p'})}.
\end{equation*}
Consider the expression defining $T_m$.
Since ${\mathcal S}$ is sparse,
$$\sum_{P_m\in {\mathcal S}: P_m\subseteq P_{m-1}}\int_{P_m}f\lesssim \sum_{P_m\in {\mathcal S}: P_m\subseteq P_{m-1}}\int_{E_{P_m}}M(f\chi_{P_0})\lesssim \int_{P_{m-1}}M(f\chi_{P_0}).$$
Applying subsequently this argument implies
$$\sum_{P_1\in {\mathcal S}: P_1\subseteq P_0}\dots\sum_{P_m\in {\mathcal S}: P_m\subseteq P_{m-1}}\int_{P_m}f \lesssim \int_{P_{0}}M^m(f\chi_{P_0}).$$
Therefore, by (\ref{maxlog}),
\begin{equation*}\label{logm}
T_{m}f(x)\lesssim \sum_{Q\in {\mathcal S}}\|f\|_{L(\log L)^m,Q}\chi_Q(x)
\end{equation*}
and we are done.
\end{proof}

\section{Proofs of Theorems \ref{extbctbm} and \ref{sepbumex}}\label{sec:JoingBump}
\subsection{Proof of Theorem \ref{extbctbm}}
By Lemma \ref{Lem:Am+1LlogLm}, it suffices to prove the first part of Theorem \ref{extbctbm} for $A_{L(\log L)^m,{\mathcal S}}$.
Consider the bilinear form
$$\langle A_{L(\log L)^m,{\mathcal S}}f, g\rangle=\sum_{Q\in S}\|f\|_{L(\log L)^m,Q}g_Q|Q|.$$

By H\"older's inequality (\ref{prop2}),
$$g_Q\le 2\|u^{1/p}\|_{\a_p,Q}\|gu^{-1/p}\|_{\bar\a_p,Q}.$$
Further, using the fact that if $C(t)=t\log^m(e+t)$, then $C^{-1}(t)\sim\frac{t}{\log^m(e+t)}$, and applying Lemma \ref{holder}, we obtain
$$\|f\|_{L(\log L)^m,Q}\lesssim   \|fv^{1/p}\|_{\f,Q}\|v^{-1/p}\|_{\b_{p,m},Q}.$$
Therefore,
\begin{equation}\label{thal}
\langle A_{L(\log L)^m,{\mathcal S}}f, g\rangle\lesssim [u^{1/p},v^{-1/p}]_{\a_p,\b_{p,m}}\sum_{Q\in S}\|fv^{1/p}\|_{\f,Q}\|gu^{-1/p}\|_{\bar\a_p,Q}|Q|.
\end{equation}

Using that ${\mathcal S}$ is sparse and by H\"older's inequality along with (\ref{p}),
\begin{eqnarray*}
&&\sum_{Q\in S}\|fv^{1/p}\|_{\f,Q}\|gu^{-1/p}\|_{\bar\a_p,Q}|Q|
\lesssim \sum_{Q\in {\mathcal S}}\int_{E_Q}M_{\f}(fv^{1/p})M_{\bar\a_p}(gu^{-1/p})dx\\
&&\lesssim \int_{{\mathbb R}^n}M_{\f}(fv^{1/p})M_{\bar\a_p}(gu^{-1/p})dx
\lesssim \|M_{\f}(fv^{1/p})\|_{L^p}\|M_{\bar\a_p}(gu^{-1/p})\|_{L^{p'}}\\
&&\lesssim \|f\|_{L^p(v)}\|g\|_{L^{p'}(u^{1-p'})}.
\end{eqnarray*}
This combined with (\ref{thal}) proves, by duality, the desired estimate for $A_{L(\log L)^m,{\mathcal S}}$, and therefore, Theorem \ref{extbctbm} is proved.

\vskip 0.3cm
The proof of Theorem \ref{sepbumex} requires some preliminaries which we mention in the following subsection.

\subsection{Auxiliary statements}
Given a sparse family ${\mathcal S}$ and a non-negative sequence
$\{\tau_Q\}_{Q\in {\mathcal S}}$, consider the operator
$T_{{\mathcal S},\tau}$ defined by
$$T_{{\mathcal S},\tau}f(x)=\sum_{Q\in {\mathcal S}}\tau_Qf_Q\chi_Q(x).$$
Given a cube $R$, denote ${\mathcal S}(R)=\{Q\in {\mathcal S}:Q\subseteq R\}$ and
$$
T_{{\mathcal S},\tau}^Rf(x)=\sum_{Q\in {\mathcal S}(R)}\tau_Qf_Q\chi_Q(x).
$$

The following result is due to M. Lacey, E. Sawyer and I. Uriarte-Tuero \cite{LSU} (see also \cite{H,T} for different proofs).

\begin{theorem}\label{lsu} Let $p>1$. We have
$$\|T_{{\mathcal S},\tau}(\cdot\si)\|_{L^p(\si)\to L^p(u)}\sim \sup_{R\in {\mathcal S}}\frac{\|T_{{\mathcal S},\tau}^R(\si)\|_{L^p(u)}}{\si(R)^{1/p}}
+\sup_{R\in {\mathcal S}}\frac{\|T_{{\mathcal S},\tau}^R(u)\|_{L^{p'}(\si)}}{u(R)^{1/p'}}.
$$
\end{theorem}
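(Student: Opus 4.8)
\textbf{Proof proposal for Theorem \ref{lsu}.}

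The plan is to reduce the two-weight norm inequality for $T_{\mathcal S,\tau}$ to the two testing conditions on the right-hand side via a dyadic/parallel-stopping-time argument. The inequality $\gtrsim$ is immediate: testing $\|T_{\mathcal S,\tau}(\cdot\sigma)\|_{L^p(\sigma)\to L^p(u)}$ against $f=\chi_R$ for $R\in\mathcal S$ gives $T_{\mathcal S,\tau}(\chi_R\sigma)\geq T^R_{\mathcal S,\tau}(\sigma)$ pointwise (only cubes inside $R$ contribute a full average, and averages are monotone), so $\|T^R_{\mathcal S,\tau}(\sigma)\|_{L^p(u)}\lesssim \|T_{\mathcal S,\tau}(\cdot\sigma)\|\,\sigma(R)^{1/p}$; the dual testing condition follows symmetrically by passing to the adjoint $T_{\mathcal S,\tau}^*(\cdot u)$ acting $L^{p'}(u)\to L^{p'}(\sigma)$, which has the same form. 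So the content is the direction $\lesssim$.

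For the upper bound I would fix nonnegative $f\in L^p(\sigma)$, $g\in L^{p'}(u)$ and estimate the bilinear form
$$
\langle T_{\mathcal S,\tau}(f\sigma),g\,u\rangle=\sum_{Q\in\mathcal S}\tau_Q\Big(\frac{1}{|Q|}\int_Q f\,\sigma\Big)\Big(\int_Q g\,u\Big).
$$
The standard device is a \emph{parallel corona} (stopping cube) decomposition: build stopping families for $f$ relative to $\sigma$ (cubes where the $\sigma$-average of $f$ roughly doubles) and for $g$ relative to $u$. Then split the sum over $Q\in\mathcal S$ according to its stopping parents $\pi_\sigma(Q)$ and $\pi_u(Q)$. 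On each ``diagonal'' piece where the two stopping cubes are nested, one may replace $\frac{1}{|Q|}\int_Q f\sigma$ by (a constant times) the average $\langle f\rangle^\sigma_{\pi_\sigma(Q)}$ and similarly for $g$, pulling these two scalars out of the inner sum; what remains is $\sum_{Q}\tau_Q\,\sigma(Q)$-type data localized to a stopping cube $R$, which is exactly controlled by the \emph{local testing} functional $\|T^R_{\mathcal S,\tau}(\sigma)\|_{L^p(u)}$ (or its dual). Summing the localized pieces and using the quasi-orthogonality of the stopping cubes — i.e. $\sum_{R}\big(\langle f\rangle^\sigma_R\big)^p\sigma(R)\lesssim \|f\|_{L^p(\sigma)}^p$ by the Carleson embedding theorem, and likewise for $g$ — together with H\"older's inequality in the form $\sum_R a_R b_R\le(\sum a_R^p)^{1/p}(\sum b_R^{p'})^{1/p'}$, yields the bound by the two testing constants.

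The main obstacle is the ``off-diagonal'' interaction: controlling the part of the sum where neither the testing condition for $\sigma$ nor the one for $u$ applies directly, i.e.\ where the stopping data of $f$ and of $g$ live at incomparable scales. The classical resolution (as in \cite{LSU}, and streamlined in \cite{H,T}) is to note that for a positive dyadic operator there is essentially no off-diagonal term once the corona is set up correctly: each $Q$ is assigned to the \emph{minimal} stopping cube above it for whichever of the two stopping trees is finer there, and the geometric decay of stopping averages makes the resulting double sum absolutely convergent and dominated by a single testing condition. I would therefore organize the proof so that every $Q\in\mathcal S$ is charged to one corona, run the Carleson embedding on each side, and invoke the testing hypothesis once per corona. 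Since the paper only needs the statement of Theorem \ref{lsu} as a black box, I would present this as a sketch and refer to \cite{LSU,H,T} for the routine verification of the corona bookkeeping and the Carleson embedding estimates.
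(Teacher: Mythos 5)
The paper does not prove Theorem \ref{lsu} at all: it is imported from \cite{LSU}, with \cite{H,T} cited for alternative proofs, and is then used as a black box in the proof of Theorem \ref{sepbumex}. So there is no internal argument to compare yours with, and your decision to sketch the known proof and defer to \cite{LSU,H,T} is consistent with how the paper treats the result. Your necessity direction is correct and complete: testing on $f=\chi_R$ gives $T_{\mathcal S,\tau}(\chi_R\sigma)\ge T^R_{\mathcal S,\tau}(\sigma)$ pointwise, and the dual testing condition follows because the bilinear form $\sum_{Q}\tau_Q\big(\tfrac{1}{|Q|}\int_Qf\sigma\big)\int_Qgu$ is symmetric, so the adjoint of $f\mapsto T_{\mathcal S,\tau}(f\sigma)$, viewed as a map $L^{p'}(u)\to L^{p'}(\sigma)$, is $g\mapsto T_{\mathcal S,\tau}(gu)$.

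For the sufficiency direction, what you label ``routine corona bookkeeping'' is precisely where the content of the theorem lies, and your sketch blurs it. After pulling out the two stopping averages $\langle f\rangle^{\sigma}_{F}$ and $\langle g\rangle^{u}_{G}$, the remaining local quantity is $\sum_{Q}\tau_Q\,\sigma(Q)u(Q)/|Q|$ over the cubes charged to the pair $(F,G)$; to convert this into a testing quantity one writes it as $\int T^{R}_{\mathcal S,\tau}(\sigma)\,u$ with $R$ the inner of the two stopping cubes, applies H\"older and the testing hypothesis, and must then close the resulting double sum over pairs. The obstruction is that, for a fixed outer stopping cube, the inner-family stopping cubes lying in its corona are nested rather than disjoint, so a naive Carleson embedding ``on each side'' does not immediately apply; likewise your variant that keeps $\int_Q gu$ intact produces $\sum_{F}\|g\chi_F\|_{L^{p'}(u)}^{p'}$ over non-disjoint $F$, which is not bounded by $\|g\|_{L^{p'}(u)}^{p'}$. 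Your assertion that ``for a positive dyadic operator there is essentially no off-diagonal term'' is therefore too glib: the proofs in \cite{LSU,H,T} split according to which stopping parent is inner, apply the corresponding testing condition once per inner cube, and then use the stopping construction (geometric growth of averages, maximal function bounds, quasi-orthogonality) to sum the outer family — this case analysis is exactly why both testing conditions appear. As a pointer to the literature your outline is adequate for the role Theorem \ref{lsu} plays in this paper, but it should be presented as a citation, not as a proof: the step you defer is the theorem's actual difficulty.
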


Let $p>1$. Suppose that $A\in B_p$ and $\f$ is a decreasing function such that $\int_{1/2}^{\infty}\frac{1}{\f(t)^{p'}}\frac{dt}{t}<\infty.$
In \cite{La}, M. Lacey established that the condition
\begin{equation}\label{lac}
\sup_Q(u_Q)^{1/p}\|\si^{1/p'}\|_{\bar A,Q}\f\left(\frac{\|\si^{1/p'}\|_{\bar A,Q}}{(\si_Q)^{1/p'}}\right)<\infty
\end{equation}
implies that
$$
\Big\|\sum_{Q\in {\mathcal S}(R)}\si_Q\chi_Q\Big\|_{L^p(u)}\lesssim \si(R)^{1/p}.
$$
It was also shown in \cite{La} that this result implies a particular case of the separated bump conjecture with logarithmic bumps proved in \cite{CRV}.

In \cite{Li}, K. Li provided a different proof of a slightly stronger result where (\ref{lac}) is replaced by
\begin{equation}\label{lii}
\sup_Q(u_Q)^{1/p}\frac{\si_Q}{\|\si^{1/p}\|_{A,Q}}
\f\left(\frac{(\si_Q)^{1/p}}{\|\si^{1/p}\|_{A,Q}}
\right)<\infty.
\end{equation}
Observe that, by H\"older's inequality,
$$
\frac{(\si_Q)^{1/p}}{\|\si^{1/p}\|_{A,Q}}\le 2\frac{\|\si^{1/p'}\|_{\bar A,Q}}{(\si_Q)^{1/p'}},
$$
and therefore (\ref{lac}) is stronger than (\ref{lii}).

We will need the following extension of the above results.

\begin{theorem}\label{estli} Let $p>1$, and let $\f$ and $\psi$ be increasing functions such that
$$\int_{1/2}^{\infty}\Big(\frac{1}{\f(t)^{p'}}+\frac{1}{\psi(t)^{p'}}\Big)\frac{dt}{t}<\infty.$$
Let ${\mathcal S}$ be a sparse family, and let $\{\la_Q\}_{Q\in {\mathcal S}}$ be a sequence such that $\la_Q\ge 1$ for every $Q\in {\mathcal S}$.
If $A\in B_p$ and
$$
K\equiv\sup_Q(u_Q)^{1/p}\la_Q\psi(\la_Q)\frac{\si_Q}{\|\si^{1/p}\|_{A,Q}}\f\left(\frac{(\si_Q)^{1/p}}{\|\si^{1/p}\|_{A,Q}}\right)<\infty,
$$
then
$$
\Big\|\sum_{Q\in {\mathcal S}(R)}\la_Q\si_Q\chi_Q\Big\|_{L^p(u)}\lesssim K\si(R)^{1/p}.
$$
\end{theorem}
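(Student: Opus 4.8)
The plan is to reduce the weighted norm of $\sum_{Q\in{\mathcal S}(R)}\la_Q\si_Q\chi_Q$ to the case $\la_Q\equiv 1$, which is exactly the result of Li \cite{Li} governed by condition (\ref{lii}), by absorbing the extra factor $\la_Q\psi(\la_Q)$ into a pigeonhole decomposition over dyadic ranges of $\la_Q$. Concretely, I would partition ${\mathcal S}(R)$ as $\bigcup_{k\ge 0}{\mathcal S}_k$, where ${\mathcal S}_k=\{Q\in{\mathcal S}(R):2^k\le \la_Q<2^{k+1}\}$ (using $\la_Q\ge 1$). On each ${\mathcal S}_k$ the factor $\la_Q$ is comparable to $2^k$, so
$$\Big\|\sum_{Q\in{\mathcal S}(R)}\la_Q\si_Q\chi_Q\Big\|_{L^p(u)}\le\sum_{k\ge 0}\Big\|\sum_{Q\in{\mathcal S}_k}\la_Q\si_Q\chi_Q\Big\|_{L^p(u)}\lesssim\sum_{k\ge 0}2^k\Big\|\sum_{Q\in{\mathcal S}_k}\si_Q\chi_Q\Big\|_{L^p(u)}.$$
Since ${\mathcal S}_k$ is itself sparse (a subfamily of a sparse family), I can apply Li's theorem to each inner sum. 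The hypothesis $K<\infty$ restricted to $Q\in{\mathcal S}_k$ gives, using $\la_Q\psi(\la_Q)\gtrsim 2^k\psi(2^k)$,
$$\sup_{Q\in{\mathcal S}_k}(u_Q)^{1/p}\frac{\si_Q}{\|\si^{1/p}\|_{A,Q}}\f\!\left(\frac{(\si_Q)^{1/p}}{\|\si^{1/p}\|_{A,Q}}\right)\lesssim\frac{K}{2^k\psi(2^k)},$$
so Li's theorem yields $\big\|\sum_{Q\in{\mathcal S}_k}\si_Q\chi_Q\big\|_{L^p(u)}\lesssim\frac{K}{2^k\psi(2^k)}\,\si(R)^{1/p}$. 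Summing, the $2^k$ cancels and we are left with $\sum_{k\ge 0}\psi(2^k)^{-1}\,K\,\si(R)^{1/p}$, and $\sum_{k\ge 0}\psi(2^k)^{-1}<\infty$ follows from the integral hypothesis $\int_{1/2}^\infty\psi(t)^{-p'}\,t^{-1}dt<\infty$ together with $\psi$ increasing (note $\psi(t)^{-1}\le\psi(t)^{-p'/p'}$... more carefully: $\int_{1/2}^\infty\psi(t)^{-p'}t^{-1}dt<\infty$ with $\psi$ increasing forces $\psi(2^k)^{-p'}\to 0$, and one recovers summability of $\psi(2^k)^{-1}$ since $p'>1$ and the $p'$-th powers are summable — or, if a direct use of Li's theorem requires an honest $\varphi$-type gain, one invokes Li's statement with the pair $\varphi$ absorbed and uses $\int\psi^{-p'}t^{-1}dt<\infty$ in the dyadic form $\sum_k\psi(2^k)^{-p'}<\infty$, then sums the $k$-series by Hölder against $\sum 2^{-k\varepsilon}$).

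The one genuine subtlety is that Li's theorem as quoted uses the function $\f$ (with $\int_{1/2}^\infty\f^{-p'}t^{-1}dt<\infty$) as the gain in (\ref{lii}); in the decomposition above the per-scale constant I extract from $K$ is $\f$ times the number $(2^k\psi(2^k))^{-1}$, so I am feeding Li's theorem the hypothesis with its native $\f$ and merely a smaller constant. Hence the clean way to organize the write-up is: (i) state the pigeonhole decomposition; (ii) observe each ${\mathcal S}_k$ is sparse and, on it, the hypothesis of Li's theorem holds with constant $\lesssim K/(2^k\psi(2^k))$; (iii) apply Li's theorem scale by scale; (iv) sum the geometric-type series, where the summability $\sum_k\psi(2^k)^{-1}<\infty$ is the place the hypothesis $\int_{1/2}^\infty\psi(t)^{-p'}t^{-1}dt<\infty$ enters (via monotonicity of $\psi$ and $p'<\infty$).

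The main obstacle I anticipate is purely bookkeeping: making sure the two integral conditions on $\f$ and $\psi$ are each used in the right place — $\f$ inside the application of Li's theorem and $\psi$ in the final summation over $k$ — and that the constants genuinely decouple, i.e.\ that $K$ as defined (with the product $\la_Q\psi(\la_Q)$ already inside the supremum) is exactly the right quantity so that restricting to ${\mathcal S}_k$ and pulling out $2^k\psi(2^k)$ leaves precisely the quantity controlled by Li's theorem. If instead one prefers to avoid citing \cite{Li} as a black box, the same pigeonholing can be run directly on top of the testing characterization of Theorem \ref{lsu}, estimating the two testing constants $\sup_R\|T^R_{{\mathcal S},\tau}(\si)\|_{L^p(u)}/\si(R)^{1/p}$ and its dual by the corresponding scale-by-scale bounds; but borrowing Li's result keeps the argument short, so that is the route I would take.
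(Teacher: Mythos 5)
There is a genuine gap at the final summation over $k$. After applying Li's theorem on each family ${\mathcal S}_k$ with constant $\lesssim K/(2^k\psi(2^k))$ and multiplying by $2^k$, your triangle inequality requires $\sum_{k\ge 0}\psi(2^k)^{-1}<\infty$. But the hypothesis $\int_{1/2}^{\infty}\psi(t)^{-p'}\,\frac{dt}{t}<\infty$ with $\psi$ increasing only gives $\sum_k\psi(2^k)^{-p'}<\infty$, and since $p'>1$ summability of the $p'$-th powers is \emph{weaker} than summability of the first powers, not stronger (your parenthetical claim goes the wrong way). This is not a borderline technicality: in the only application of the theorem in this paper (the proof of Theorem \ref{sepbumex}) one takes $\psi(t)=\log(e+t)$, so $\psi(2^k)\simeq k+1$ and $\sum_k\psi(2^k)^{-1}$ is a divergent harmonic series, while $\sum_k\psi(2^k)^{-p'}$ converges. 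The fallback you sketch (``sum the $k$-series by H\"older against $\sum 2^{-k\e}$'') cannot be implemented either: once Li's theorem is used as a black box, every scale $k$ contributes the full quantity $\si(R)^{1/p}$, and there is no summable companion sequence left to put on the other side of H\"older.

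The paper's proof avoids exactly this loss by not invoking Li's result per scale. It pigeonholes in \emph{two} parameters, ${\mathcal S}_{k,m}=\{Q:2^k\le\la_Q\le 2^{k+1},\ 2^m\le(\si_Q)^{1/p}/\|\si^{1/p}\|_{A,Q}\le 2^{m+1}\}$, runs the Cascante--Ortega--Verbitsky estimate (Proposition \ref{cov}) together with Proposition \ref{ah} inside each piece, and keeps the per-piece bound in the form $\frac{1}{\psi(2^k)\f(2^m)}\bigl(\sum_{Q\in{\mathcal S}_{k,m}}\|\si^{1/p}\|_{A,Q}^p|Q|\bigr)^{1/p}$ rather than collapsing it to $\si(R)^{1/p}$. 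Then H\"older in the double sum over $(k,m)$, with exponents $p'$ and $p$, pairs the $\ell^{p'}$ norm of $1/(\psi(2^k)\f(2^m))$ (this is precisely where the $p'$-integral conditions on $\f$ and $\psi$ enter) with the $\ell^p$ norm of the mass terms; since the families ${\mathcal S}_{k,m}$ are pairwise disjoint, the total mass is $\sum_{Q\in{\mathcal S}(R)}\|\si^{1/p}\|_{A,Q}^p|Q|\lesssim\int_R M_A(\si^{1/p}\chi_R)^p\lesssim\si(R)$ by sparseness and $A\in B_p$. In short, the exponent $p'$ in the hypotheses is only usable because the scales share a single Carleson-type mass that is summed in $\ell^p$; your reduction discards that structure, and with it the theorem as stated. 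If you want to salvage a black-box argument, you would have to strengthen the hypothesis on $\psi$ to $\int_{1/2}^{\infty}\psi(t)^{-1}\frac{dt}{t}<\infty$, which is too strong for the intended application.
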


The proof of this result is a minor modification of the corresponding proof in \cite{Li}. In particular, as in \cite{Li}, it is based on the two following statements.

\begin{prop}\label{cov}\cite[Proposition 2.2]{COV} let ${\mathscr D}$ be a dyadic lattice, and let $p>1$. For any non-negative sequence $\{a_Q\}_{Q\in {\mathscr D}}$ and for every weight $w$,
$$\Big\|\sum_{Q\in {\mathscr D}}a_Q\chi_Q\Big\|_{L^p(w)}\simeq \left(\sum_{Q\in {\mathscr D}}a_Q\Big(\frac{1}{w(Q)}\sum_{Q'\in {\mathscr D}, Q'\subseteq Q}a_{Q'}w(Q')\Big)^{p-1}w(Q)\right)^{1/p}.$$
\end{prop}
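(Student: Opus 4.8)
The plan is to prove the two inequalities $\gtrsim$ and $\lesssim$ separately. Put $f=\sum_{Q\in{\mathscr D}}a_Q\chi_Q$; replacing ${\mathscr D}$ by a finite subfamily and using monotone convergence, we may assume that only finitely many $a_Q$ are nonzero, so that $f(x)<\infty$ a.e. For a cube $Q$ let $f_Q=\sum_{Q'\subseteq Q}a_{Q'}\chi_{Q'}$ be the local part of $f$ at the scale of $Q$; then $\frac1{w(Q)}\sum_{Q'\subseteq Q}a_{Q'}w(Q')=\frac1{w(Q)}\int_Q f_Q\,dw=:b_Q$, so the right-hand side of the asserted equivalence equals $\mathrm R:=\sum_Q a_Q b_Q^{\,p-1}w(Q)$. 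Two elementary observations will be used throughout: $(i)$ $a_Q\le b_Q$, because $a_Q\le\essinf_Q f_Q\le\frac1{w(Q)}\int_Q f_Q\,dw=b_Q$; and $(ii)$ for a.e.\ $x$ the (finitely many, nonzero‑coefficient) cubes of ${\mathscr D}$ containing $x$ may be listed as an increasing chain $Q_1(x)\subsetneq\dots\subsetneq Q_N(x)$, along which the partial sums $S_k(x):=\sum_{j\le k}a_{Q_j(x)}$ equal $f_{Q_k(x)}(x)$ and increase to $f(x)$ (with $S_0(x)=0$).

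I treat the lower bound $\mathrm R\lesssim\|f\|_{L^p(w)}^p$ first. Since $f=\sum_Q a_Q\chi_Q$, one has $\|f\|_{L^p(w)}^p=\sum_Q a_Q\int_Q f^{\,p-1}\,dw\ge\sum_Q a_Q\int_Q f_Q^{\,p-1}\,dw$, because $f_Q\le f$. If $p\ge 2$, then $t\mapsto t^{p-1}$ is convex and Jensen's inequality for the $w$‑average over $Q$ gives $\int_Q f_Q^{\,p-1}\,dw\ge w(Q)b_Q^{\,p-1}$; summing over $Q$ finishes this direction. For $1<p<2$ Jensen points the wrong way, and here I would run a stopping‑time (principal‑cube) selection relative to the averages $b_Q$: isolating the maximal cubes on which $b_Q$ roughly doubles produces a sparse‑type family on which $f_Q$ is comparable to its $w$‑average over $Q$ on a fixed fraction of the $w$‑mass, which reduces the estimate to the convex case carried out generation by generation. (When $p=2$ none of this is needed: expanding the square and using $w(Q\cap Q')=w(Q')$ for $Q'\subseteq Q$ yields the exact identity $\|f\|_{L^2(w)}^2=2\mathrm R-\sum_Q a_Q^2 w(Q)$, and $\sum_Q a_Q^2 w(Q)\le\mathrm R$ by $(i)$, so $\mathrm R\le\|f\|_{L^2(w)}^2\le 2\mathrm R$.)

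For the upper bound $\|f\|_{L^p(w)}^p\lesssim\mathrm R$, the starting point is the pointwise telescoping of observation $(ii)$: from $x^p-y^p\le p\,(x-y)\,x^{p-1}$ valid for $x\ge y\ge0$, one gets $f(x)^p=\sum_{k}\big(S_k(x)^p-S_{k-1}(x)^p\big)\le p\sum_{Q\ni x}a_Q f_Q(x)^{\,p-1}$, and integrating against $w$ gives $\|f\|_{L^p(w)}^p\le p\sum_Q a_Q\int_Q f_Q^{\,p-1}\,dw$. For $1<p\le 2$ the function $t\mapsto t^{p-1}$ is concave, so Jensen gives $\int_Q f_Q^{\,p-1}\,dw\le w(Q)b_Q^{\,p-1}$ and the proof is complete. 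The regime $p>2$ is the main obstacle: the per‑cube bound $\int_Q f_Q^{\,p-1}\,dw\le w(Q)b_Q^{\,p-1}$ is then false (a sharply concentrated $f_Q$ defeats it), and the losses can only be recovered after summing in $Q$. Following \cite{COV}, I would handle this case either by induction on the exponent — applying the already‑established statement at exponent $p-1$ to each $f_Q$, which bounds $\int_Q f_Q^{\,p-1}\,dw$ by a multiple of $\sum_{Q'\subseteq Q}a_{Q'}b_{Q'}^{\,p-2}w(Q')$, and then controlling the resulting cross sum $\sum_{Q'}a_{Q'}b_{Q'}^{\,p-2}w(Q')\sum_{Q\supseteq Q'}a_Q$ with the help of $(i)$ — or by a stopping‑time argument combined with the structural fact that any $f=\sum_Q a_Q\chi_Q$ satisfies $f\le M^{\mathscr D}_w f$ a.e., where $M^{\mathscr D}_w f(x)=\sup_{Q\ni x}\frac1{w(Q)}\int_Q f\,dw$, which lets one strip off the dyadic $w$‑maximal function and fall back on the $p=2$ model computation. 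Thus the genuinely delicate point is the $p>2$ upper bound (equivalently, by a duality consideration, the $p<2$ lower bound); the rest is the elementary telescoping‑and‑Jensen scheme above.
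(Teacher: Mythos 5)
First, note that the paper does not prove this proposition at all: it is quoted verbatim from \cite[Proposition 2.2]{COV} and used as a black box in the proof of Theorem \ref{estli}, so there is no internal argument to compare yours with; your proposal has to stand on its own. The parts you actually carry out are fine: the reduction to finitely many nonzero $a_Q$, the identities $b_Q=\frac1{w(Q)}\int_Q f_Q\,dw$ and $a_Q\le b_Q$, the telescoping bound $f(x)^p\le p\sum_{Q\ni x}a_Qf_Q(x)^{p-1}$, Jensen in the two favorable half-ranges (upper bound for $1<p\le 2$, lower bound for $p\ge 2$), and the exact $p=2$ identity $\|f\|_{L^2(w)}^2=2\mathrm R-\sum_Q a_Q^2w(Q)$ are all correct.

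The gap is that the two remaining cases, which you yourself identify as the crux, are only gestured at, and the one concrete reduction you offer does not close. For the $p>2$ upper bound, applying the statement at exponent $p-1$ inside each $Q$ leaves you with the cross sum $\sum_{Q'}a_{Q'}b_{Q'}^{\,p-2}w(Q')\sum_{Q\supseteq Q'}a_Q$, and observation $(i)$ cannot control it: $(i)$ says $a_Q\le b_Q$, whereas what a termwise comparison with $\mathrm R=\sum_{Q'}a_{Q'}b_{Q'}^{\,p-1}w(Q')$ would require is $\sum_{Q\supseteq Q'}a_Q\lesssim b_{Q'}$, which is false in general (ancestor coefficients are in no way dominated by averages of $f_{Q'}$ over $Q'$; e.g.\ one huge $a_{Q_0}$ above a cube $Q'$ with tiny $a$'s inside). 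The summed inequality can in fact be rescued, but only with genuinely new ingredients — e.g.\ H\"older in the sum together with $\sum_{Q'}a_{Q'}w(Q')\big(\tfrac1{w(Q')}\int_{Q'}f\,dw\big)^{p-1}\le\int f\,(M^{\mathscr D}_w f)^{p-1}dw\lesssim\|f\|_{L^p(w)}^p$ (weighted dyadic maximal function, bounded on $L^p(w)$ uniformly in $w$) and an absorption/finiteness argument, iterated over the ranges $p\in(k,k+1]$ — none of which appears in your text; "or by a stopping-time argument combined with $f\le M^{\mathscr D}_w f$" is a pointer, not a proof. Likewise, for the $1<p<2$ lower bound your principal-cube sketch rests on the unverified claim that $f_Q$ is comparable to its $w$-average on a fixed fraction of the $w$-mass of the stopping cubes; incidentally this case does not need any stopping time, since $b_Q\le\inf_{x\in Q}M^{\mathscr D}_w f(x)$ gives $\mathrm R\le\sum_Q a_Q\int_Q(M^{\mathscr D}_w f)^{p-1}dw=\int f\,(M^{\mathscr D}_w f)^{p-1}dw\le\|f\|_{L^p(w)}\|M^{\mathscr D}_w f\|_{L^p(w)}^{p-1}\lesssim\|f\|_{L^p(w)}^p$ for every $p>1$. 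Your closing aside that the $p>2$ upper bound is "equivalent, by a duality consideration," to the $p<2$ lower bound is also unsubstantiated. So as written the proposal proves the proposition only for $1<p\le 2$ in one direction and $p\ge 2$ in the other; the hard half of each range is missing.
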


\begin{prop}\label{ah}\cite[Lemma 5.2]{H} Let ${\mathcal S}$ be a sparse family, and let $0<s<1$. For every weight $w$,
$$\sum_{Q\in {\mathcal S}, Q\subseteq R}(w_Q)^{s}|Q|\lesssim (w_R)^{s}|R|.$$
\end{prop}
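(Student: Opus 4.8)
The plan is to reduce the claimed estimate to a single integral bound for a maximal function, and then to invoke Kolmogorov's inequality; this is essentially the route used in \cite{H}, which I would reproduce briefly for completeness. The point to keep in mind throughout is that the restriction $0<s<1$ is essential: for $s=1$ the left-hand side can be infinite (take $\mathcal S$ to contain many nested cubes), and the gain comes precisely from the concavity of $t\mapsto t^s$, which is what Kolmogorov's inequality encodes.

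First I would exploit the sparseness of $\mathcal S$: there are pairwise disjoint measurable sets $E_Q\subseteq Q$ with $|E_Q|\ge \a|Q|$, the constant $\a$ depending only on $n$. For $Q\in\mathcal S$ with $Q\subseteq R$ and $x\in E_Q$ one has $w_Q\le M(w\chi_R)(x)$, since $x\in Q\subseteq R$ and $Q$ is a cube containing $x$. Hence
$$\sum_{Q\in\mathcal S,\,Q\subseteq R}(w_Q)^s|Q|\le\frac1{\a}\sum_{Q\in\mathcal S,\,Q\subseteq R}\int_{E_Q}\big(M(w\chi_R)\big)^s\le\frac1{\a}\int_R\big(M(w\chi_R)\big)^s,$$
where the disjointness of the $E_Q$'s is what lets me collapse the sum into the last integral.

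It then remains to prove that $\int_R\big(M(w\chi_R)\big)^s\lesssim (w_R)^s|R|$, and here I would appeal to Kolmogorov's inequality: since $M$ is of weak type $(1,1)$ and $0<s<1$, for any set $E$ of finite measure one has $\int_E (Mf)^s\lesssim_{s,n}|E|^{1-s}\|f\|_{L^1}^s$ (a short computation from the distribution-function formula for the weak-type bound). Taking $E=R$ and $f=w\chi_R$ gives
$$\int_R\big(M(w\chi_R)\big)^s\lesssim |R|^{1-s}\Big(\int_R w\Big)^s=|R|^{1-s}\big(|R|\,w_R\big)^s=(w_R)^s|R|,$$
which, combined with the previous display, yields the assertion with an implicit constant depending only on $s$ and $n$. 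I do not foresee any genuine obstacle in this argument; the only care needed is the standard verification of Kolmogorov's inequality and the observation that $\int_R w<\infty$ because $w$ is locally integrable and $R$ is bounded.
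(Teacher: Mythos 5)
Your proof is correct, and since the paper only cites this proposition from \cite[Lemma 5.2]{H} without reproducing a proof, your argument fills that gap rather than diverging from anything in the text. The route you take --- dominating $w_Q$ by $M(w\chi_R)$ on the disjoint sets $E_Q$ and then applying Kolmogorov's inequality for the weak $(1,1)$ maximal operator with exponent $0<s<1$ --- is exactly the standard argument behind the cited lemma, and every step (including the integrability of $w$ on the bounded cube $R$) checks out.
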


\begin{proof}[Proof of Theorem \ref{estli}] For $k,m\ge 0$ define the sets
$${\mathcal S}_{k,m}=\Big\{Q\in {\mathcal S}(R): 2^k\le \la_Q\le 2^{k+1}, 2^m\le \frac{(\si_Q)^{1/p}}{\|\si^{1/p}\|_{A,Q}}\le 2^{m+1}\Big\}.$$
Then, applying Proposition \ref{cov} yields
\begin{eqnarray}
&&\Big\|\sum_{Q\in {\mathcal S}(R)}\la_Q\si_Q\chi_Q\Big\|_{L^p(u)}\le 2\sum_{k,m\ge 0}2^k\Big\|\sum_{Q\in {\mathcal S}_{k,m}}\si_Q\chi_Q\Big\|_{L^p(u)}\label{km}\\
&&\lesssim \sum_{k,m\ge 0}2^k\left(\sum_{Q\in {\mathcal S}_{k,m}}\si_Q\Big(\frac{1}{u(Q)}\sum_{Q'\in {\mathcal S}_{k,m}, Q'\subseteq Q}\si_{Q'}u(Q')\Big)^{p-1}u(Q)\right)^{1/p}.\nonumber
\end{eqnarray}

Suppose first that $p\ge 2$. Then, by Proposition \ref{ah},
\begin{eqnarray*}
\sum_{Q'\in {\mathcal S}_{k,m}, Q'\subseteq Q}\si_{Q'}u(Q')&=&\sum_{Q'\in {\mathcal S}_{k,m}, Q'\subseteq Q}\si_{Q'}(u_{Q'})^{\frac{1}{p-1}}(u_{Q'})^{1-\frac{1}{p-1}}|Q'|\\
&\lesssim& \left(\frac{K}{2^k\psi(2^k)2^m\f(2^m)}\right)^{p'}\sum_{Q'\in {\mathcal S}_{k,m}, Q'\subseteq Q}(u_{Q'})^{1-\frac{1}{p-1}}|Q'|\\
&\lesssim& \left(\frac{K}{2^k\psi(2^k)2^m\f(2^m)}\right)^{p'}(u_{Q})^{1-\frac{1}{p-1}}|Q|.
\end{eqnarray*}
Therefore, by (\ref{km}),
\begin{equation}\label{ar}
\Big\|\sum_{Q\in {\mathcal S}(R)}\la_Q\si_Q\chi_Q\Big\|_{L^p(u)}\lesssim K\sum_{k,m\ge 0}\frac{1}{\psi(2^k)2^m\f(2^m)}\Big(\sum_{Q\in {\mathcal S}_{k,m}}\si(Q)\Big)^{1/p}.
\end{equation}
From this, using that $\si(Q)\lesssim 2^{mp}\|\si^{1/p}\|_{A,Q}^p|Q|$ for $Q\in {\mathcal S}_{k,m}$, we obtain
\begin{eqnarray*}
&&\Big\|\sum_{Q\in {\mathcal S}(R)}\la_Q\si_Q\chi_Q\Big\|_{L^p(u)}\lesssim K\sum_{k,m\ge 0}\frac{1}{\psi(2^k)\f(2^m)}\Big(\sum_{Q\in {\mathcal S}_{k,m}}\|\si^{1/p}\|_{A,Q}^p|Q|\Big)^{1/p}\\
&&\lesssim K\left(\sum_{k,m\ge 0}\Big(\frac{1}{\psi(2^k)\f(2^m)}\Big)^{p'}\right)^{1/p'}\left(\sum_{k,m\ge 0}\sum_{Q\in {\mathcal S}_{k,m}}\|\si^{1/p}\|_{A,Q}^p|Q|\right)^{1/p}\\
&&\lesssim K\left(\int_{1/2}^{\infty}\frac{1}{\psi(t)^{p'}}dt\right)^{1/p'}\left(\int_{1/2}^{\infty}\frac{1}{\f(t)^{p'}}dt\right)^{1/p'}
\left(\sum_{Q\in {\mathcal S}(R)}\|\si^{1/p}\|_{A,Q}^p|Q|\right)^{1/p}\\
&&\lesssim \left(\int_RM_{A}(\si^{1/p}\chi_R)^p\right)^{1/p}\lesssim \si(R)^{1/p}.
\end{eqnarray*}

Consider now the case $1<p<2$. Then, by Proposition \ref{ah},
\begin{eqnarray*}
\sum_{Q'\in {\mathcal S}_{k,m}, Q'\subseteq Q}\si_{Q'}u(Q')&=&\sum_{Q'\in {\mathcal S}_{k,m}, Q'\subseteq Q}(\si_{Q'})^{p-1}u_{Q'}(\si_{Q'})^{2-p}|Q'|\\
&\lesssim& \left(\frac{K}{2^k\psi(2^k)2^m\f(2^m)}\right)^{p}\sum_{Q'\in {\mathcal S}_{k,m}, Q'\subseteq Q}(\si_{Q'})^{2-p}|Q'|\\
&\lesssim& \left(\frac{K}{2^k\psi(2^k)2^m\f(2^m)}\right)^{p}(\si_{Q})^{2-p}|Q|.\\
\end{eqnarray*}
Therefore, by (\ref{km}),
\begin{eqnarray*}
&&\Big\|\sum_{Q\in {\mathcal S}(R)}\la_Q\si_Q\chi_Q\Big\|_{L^p(u)}\\
&&\lesssim \sum_{k,m\ge 0}2^k\left(\frac{K}{2^k\psi(2^k)2^m\f(2^m)}\right)^{p-1}
\left(\sum_{Q\in {\mathcal S}_{k,m}}\si_Q\big((\si_Q)^{p-1}u_Q\big)^{2-p}|Q|\right)^{1/p}\\
&&\lesssim K\sum_{k,m\ge 0}\frac{1}{\psi(2^k)2^m\f(2^m)}\Big(\sum_{Q\in {\mathcal S}_{k,m}}\si(Q)\Big)^{1/p},
\end{eqnarray*}
and we again arrived at (\ref{ar}), which completes the proof.
\end{proof}

\subsection{Proof of Theorem \ref{sepbumex}}\label{subsec:Proofmcrv} As before,
by Lemma \ref{Lem:Am+1LlogLm}, it suffices to establish the first part of Theorem \ref{sepbumex} for $A_{L(\log L)^m,{\mathcal S}}$.

Note that in \cite{Li}, K. Li found a characterization of a similar inequality
$$\|\sum_{Q\in {\mathcal S}}\|f\|_{L^r,Q}\chi_Q\|_{L^p(u)}\lesssim \|f\|_{L^p(v)} \quad(1<r<p).$$
We partially follow his approach.

It will be more convenient to deal with an equivalent form of the statement written as
\begin{equation}\label{eqf}
\|\sum_{Q\in {\mathcal S}}\|f\si\|_{L(\log L)^m,Q}\chi_Q\|_{L^p(u)}\lesssim \|f\|_{L^p(\si)},
\end{equation}
where $\si=v^{1-p'}$. Note that in terms of $\si$, the assumptions that
$$[u^{1/p},v^{-1/p}]_{t^p,\b_{p,m}}+[u^{1/p},v^{-1/p}]_{\a_p,\ga_{p,m}}<\infty$$
can be rewritten in the form
\begin{equation}\label{recond}
\sup_Qu_Q\|\si\|_{L(\log L)^{(m+1)p'-1+\d},Q}^{p-1}<\infty
\end{equation}
and
\begin{equation}\label{k2}
\sup_Q\|u\|_{L(\log L)^{p-1+\d},Q}\|\si\|_{L(\log L)^{m(p'+\d)},Q}^{p-1}<\infty.
\end{equation}

We will use the notation
$$\|f\|_{\f,Q}^{\si}=\inf\left\{\la>0:\frac{1}{\si(Q)}\int_Q\f(|f(y)|/\la)\si(y)dy\right\}$$
and $f_{Q,\si}=\frac{1}{\si(Q)}\int_Qf\si$.

We start by observing that
\begin{eqnarray}
&&\|f\si\|_{L(\log L)^m,Q}\simeq\frac{1}{|Q|}\int_Qf\log^m\left(\frac{f\si}{(f\si)_Q}+e\right)\si\label{si}\\
&&=\frac{1}{|Q|}\int_Qf\log^m\left(\frac{f}{f_{Q,\si}}\frac{\si}{\si_Q}+e\right)\si\nonumber\\
&&\lesssim \frac{1}{|Q|}\int_Qf\log^m\left(\frac{f}{f_{Q,\si}}+e\right)\si+\frac{1}{|Q|}\int_Qf\log^m\left(\frac{\si}{\si_Q}+e\right)\si.\nonumber
\end{eqnarray}

Take $1<r<p$, which will be specified later on.
By H\"older's inequality,
\begin{eqnarray*}
\frac{1}{|Q|}\int_Qf\log^m\left(\frac{\si}{\si_Q}+e\right)\si&\le& \si_Q\|f\|_{L^r,Q}^\si\left(\frac{1}{\si(Q)}\int_Q\log^{mr'}\left(\frac{\si}{\si_Q}+e\right)\si\right)^{1/r'}\\
&\simeq& \|\si\|_{L(\log L)^{mr'},Q}^{1/r'}(\si_Q)^{1/r}\|f\|_{L^r,Q}^\si.
\end{eqnarray*}
Next,
$$\frac{1}{|Q|}\int_Qf\log^m\left(\frac{f}{f_{Q,\si}}+e\right)\si\simeq \si_Q\|f\|_{L(\log L)^m,Q}^{\si}\lesssim \|\si\|_{L(\log L)^{mr'},Q}^{1/r'}(\si_Q)^{1/r}\|f\|_{L^r,Q}^\si.$$
Therefore, by (\ref{si}),
$$
\|f\si\|_{L(\log L)^m,Q}\lesssim \|\si\|_{L(\log L)^{mr'},Q}^{1/r'}(\si_Q)^{1/r}\|f\|_{L^r,Q}^\si.
$$

We obtain that (\ref{eqf}) will follow from
\begin{equation}\label{sec}
\|\sum_{Q\in {\mathcal S}}\|\si\|_{L(\log L)^{mr'},Q}^{1/r'}(\si_Q)^{1/r}\|f\|_{L^r,Q}^\si\chi_Q\|_{L^p(u)}\lesssim \|f\|_{L^p(\si)}.
\end{equation}
Observe that (\ref{sec}) is equivalent to
\begin{equation}\label{seceq}
\|\sum_{Q\in {\mathcal S}}\|\si\|_{L(\log L)^{mr'},Q}^{1/r'}(\si_Q)^{1/r}\Big(\frac{1}{\si(Q)}\int_Qf\si\Big)\chi_Q\|_{L^p(u)}\lesssim \|f\|_{L^p(\si)}.
\end{equation}
Indeed, on the one hand, (\ref{seceq}) follows from (\ref{sec}) by H\"older's inequality. On the other hand, since
$$\|f\|_{L^r,Q}^\si\le \frac{1}{\si(Q)}\int_Q(M_{r,\si}^{\mathscr D}f)\si,$$
and $M_{r,\si}^{\mathscr D}$ is bounded on $L^p(\si)$ (here is important that $r<p$), we obtain that
(\ref{seceq}) implies (\ref{sec}).

Denote
$$\la_Q=\left(\frac{\|\si\|_{L(\log L)^{mr'}, Q}}{\si_Q}\right)^{1/r'}.$$
By Theorems \ref{lsu} and \ref{estli}, in order to establish (\ref{seceq}), it suffices to show that there exist
$A\in B_p, B\in B_{p'}$ and functions $\f,\psi, \rho,\theta$ satisfying
$$\int_{1/2}^{\infty}\Big(\frac{1}{\f(t)^{p'}}+\frac{1}{\psi(t)^{p'}}\Big)\frac{dt}{t}<\infty\quad\text{and}\quad
\int_{1/2}^{\infty}\Big(\frac{1}{\rho(t)^{p}}+\frac{1}{\theta(t)^{p}}\Big)\frac{dt}{t}<\infty
$$
such that
\begin{equation}\label{cht1}
\sup_Q(u_Q)^{1/p}\la_Q\psi(\la_Q)
\|\si^{1/p'}\|_{\bar A,Q}\f
\left(\frac{\|\si^{1/p'}\|_{\bar A,Q}}{(\si_Q)^{1/p'}}\right)<\infty
\end{equation}
and
\begin{equation}\label{cht2}
\sup_Q(\si_Q)^{1/p'}\la_Q\rho(\la_Q)
\|u^{1/p}\|_{\bar B,Q}\theta
\left(\frac{\|u^{1/p}\|_{\bar B,Q}}{(u_Q)^{1/p}}\right)<\infty.
\end{equation}

We start by verifying (\ref{cht1}). In what follows we introduce several parameters that will be fixed later on.
Take $\f(t)=\psi(t)=\log(e+t)$. Next, let $A(t)=\frac{t^p}{\log^{1+\mu}(e+t)}$. Then
$A\in B_p$ and $\bar A(t)\sim t^{p'}\log^{\frac{1+\mu}{p-1}}(e+t)$.

Take $0<\nu<1$ such that $\frac{1+\mu}{1-\nu}=1+2\mu$. Then, by H\"older's inequality,
$$
\|\si^{1/p'}\|_{\bar A,Q}\sim \|\si\|_{L(\log L)^{\frac{1+\mu}{p-1}},Q}^{1/p'}
\lesssim \left((\si_Q)^{\nu}
\|\si\|_{L(\log L)^{\frac{1+2\mu}{p-1}},Q}^{1-\nu}\right)^{1/p'}.
$$
Hence, setting
$$
t_Q=\frac{\|\si\|_{L(\log L)^{\frac{1+2\mu}{p-1}},Q}}{\si_Q}
$$
and using that $\sup_{t\ge 1}t^{-\frac{\nu}{p'}}
\f\big(t^{\frac{1-\nu}{p'}}\big)
<\infty$,
we obtain
\begin{eqnarray}
\|\si^{1/p'}\|_{\bar A,Q}\f
\left(\frac{\|\si^{1/p'}\|_{\bar A,Q}}{(\si_Q)^{1/p'}}\right)&\lesssim&
\|\si\|_{L(\log L)^{\frac{1+2\mu}{p-1}},Q}^{1/p'}
t_Q^{-\frac{\nu}{p'}}\f\big(t_Q^{\frac{1-\nu}{p'}}\big)\label{sipart}\\
&\lesssim& \|\si\|_{L(\log L)^{\frac{1+2\mu}{p-1}},Q}^{1/p'}.\nonumber
\end{eqnarray}

Similarly, by H\"older's inequality, if $s<r$, then
$$\left(\frac{\|\si\|_{L(\log L)^{mr'}, Q}}{\si_Q}\right)^{1/r'}\lesssim \left(\frac{\|\si\|_{L(\log L)^{ms'}, Q}}{\si_Q}\right)^{1/s'}.$$
Therefore, setting
$$\tau_Q=\frac{\|\si\|_{L(\log L)^{ms'},Q}}{\si_Q}$$
and using that $\sup_{t\ge 1}t^{\frac{1}{s'}-\frac{1}{r'}}\psi(t^{1/s'})<\infty$, we obtain
\begin{eqnarray}
\la_Q\psi(\la_Q)&\lesssim& \tau_Q^{1/s'}\psi(\tau_Q^{1/s'})=\tau_Q^{1/r'}\tau_Q^{1/s'-1/r'}\psi(\tau_Q^{1/s'})\label{laq}\\
&\lesssim & \left(\frac{\|\si\|_{L(\log L)^{ms'}, Q}}{\si_Q}\right)^{1/r'}.\nonumber
\end{eqnarray}
From this, and by (\ref{sipart}), the left-hand side of (\ref{cht1}) is controlled by
\begin{equation}\label{contby}
(u_Q)^{1/p} \|\si\|_{L(\log L)^{\frac{1+2\mu}{p-1}},Q}^{1/p'}\left(\frac{\|\si\|_{L(\log L)^{ms'}, Q}}{\si_Q}\right)^{1/r'}.
\end{equation}

Let $0<q<1$ and $s_0<s$. Then, by H\"older's inequality, the expression in (\ref{contby}) is at most
\begin{equation}\label{newcont}
(u_Q)^{1/p}\left((\si_Q)^q\|\si\|_{L(\log L)^{\frac{1+2\mu}{(p-1)(1-q)}},Q}^{1-q}\right)^{1/p'}
\left(\frac{\|\si\|_{L(\log L)^{ms_0'}, Q}}{\si_Q}\right)^{s'/s_0'r'}.
\end{equation}

We now fix the parameters in such a way that
$$\frac{q}{p'}=\frac{s'}{s_0'r'}\quad\text{and}\quad \frac{1}{(p-1)(1-q)}=\frac{ms_0'r'}{s'}.$$
It follows from this that
$$q=\frac{mp}{mp+1}\quad\text{and}\quad ms_0'=\frac{s'}{r'}\big((m+1)p'-1\big).$$
Since $s_0<s$, we have $s_0'>s'$, and hence $r'<\frac{(m+1)p'-1}{m}$. Therefore, the additional assumption on $r$ is that $\frac{mp+1}{m+1}<r$.

Let $\d$ be a constant from condition (\ref{recond}). Take $s<r$ in such a way that
\begin{equation}\label{sr}
\frac{s'}{r'}\big((m+1)p'-1\big)\le(m+1)p'-1+\d.
\end{equation}
Also fix $\mu>0$ such that $\frac{2\mu}{(p-1)(1-q)}=\d$. We obtain that the expression in (\ref{newcont}) is at most
$$(u_Q)^{1/p}\|\si\|_{L(\log L)^{(m+1)p'-1+\d},Q}^{1/p'},$$
which, by (\ref{recond}), proves (\ref{cht1}).

The proof of (\ref{cht2}) is based on similar ideas.
As before, set $\rho(t)=\theta(t)=\log(e+t)$. Next, let $B(t)=\frac{t^{p'}}{\log^{1+\mu}(e+t)}$. Then
$B\in B_{p'}$ and $\bar B(t)\sim t^{p}\log^{(1+\mu)(p-1)}(e+t)$.

The same arguments as above show that
$$
\|u^{1/p}\|_{\bar B,Q}\theta
\left(\frac{\|u^{1/p}\|_{\bar B,Q}}{(u_Q)^{1/p}}\right)\lesssim \|u\|_{L(\log L)^{(1+2\mu)(p-1)},Q}^{1/p}.
$$
From this and from (\ref{laq}) we obtain that the left-hand side of (\ref{cht2}) is at most
\begin{eqnarray}
&&\|u\|_{L(\log L)^{(1+2\mu)(p-1)},Q}^{1/p}(\si_Q)^{1/p'}
 \left(\frac{\|\si\|_{L(\log L)^{ms'}, Q}}{\si_Q}\right)^{1/r'}\nonumber\\
 &&\lesssim
 \|u\|_{L(\log L)^{(1+2\mu)(p-1)},Q}^{1/p}
 \|\si\|_{L(\log L)^{ms'}, Q}^{1/p'}.\label{ull}
 \end{eqnarray}

Observe that our current assumptions on $s$ and $r$ (guaranteeing that (\ref{cht1}) holds) are $\frac{mp+1}{m+1}<r<p$ and $s<r$ such that (\ref{sr}) holds.
We now assume additionally that $s$ and $r$ are so close to $p$ that $s'\le p'+\d$. Fix also $\mu$ such that $2\mu(p-1)=\d$.
Then we obtain that the expression in (\ref{ull}) is controlled by condition (\ref{k2}), and therefore, Theorem \ref{sepbumex} is proved.

\begin{proof}[Proof of Corollary \ref{corpc}]
Recall that
$$
\b_{p,m}(t)=t^{p'}\log^{(m+1)p'-1+\d}(e+t),\quad \ga_{p,m}(t)=t^{p'}\log^{m(p'+\d)}(e+t)
$$
and
$$
\psi_{p,m}(t)=t^{p'}\log^{\max((m+1)p'-1, mp'+1)+\e}(e+t).
$$
It suffices to prove that, with suitable choice of $\d$,
$$
[u^{1/p},v^{-1/p}]_{t^p,\b_{p,m}}+[u^{1/p},v^{-1/p}]_{\a_p,\ga_{p,m}}
\lesssim
[u^{1/p},v^{-1/p}]_{t^p,\psi_{p,m}}+[u^{1/p},v^{-1/p}]_{\psi_{p',m},t^{p'}}.
$$
This would provide the estimate for $\|A_{L(\log L)^m,{\mathcal S}}\|_{L^p(v)\to L^p(u)}$. Since the right-hand side here is self-dual,
from this and from Lemma \ref{Lem:Am+1LlogLm} we obtain the desired bound for $T_b^m$.

Observe that for $\d\le \e$ we have $\b_{p,m}\le \psi_{p,m}$. Therefore,
$$[u^{1/p},v^{-1/p}]_{t^p,\b_{p,m}}\le [u^{1/p},v^{-1/p}]_{t^p,\psi_{p,m}}.$$
Hence, the result will follow if we show that
\begin{equation}\label{itrm}
[u^{1/p},v^{-1/p}]_{\a_p,\ga_{p,m}}\lesssim [u^{1/p},v^{-1/p}]_{t^p,\psi_{p,m}}+[u^{1/p},v^{-1/p}]_{\psi_{p',m},t^{p'}}.
\end{equation}

By H\"older's inequality, for $0<\a<1$,
$$
\|u^{1/p}\|_{L^p(\log L)^{p-1+\d},Q}\lesssim \|u^{1/p}\|_{L^p,Q}^{\a}\|u^{1/p}\|_{L^p(\log L)^{\frac{p-1+\d}{1-\a}},Q}^{1-\a}
$$
and
$$
\|v^{-1/p}\|_{L^{p'}(\log L)^{m(p'+\d)}, Q}\lesssim \|v^{-1/p}\|_{L^{p'},Q}^{1-\a}\|v^{-1/p}\|_{L^{p'}(\log L)^{\frac{m(p'+\d)}{\a}},Q}^{\a}.
$$
Therefore, the left-hand side of (\ref{itrm}) is at most
\begin{equation}\label{bexpr}
\Big(\|u^{1/p}\|_{L^p,Q}\|v^{-1/p}\|_{L^{p'}(\log L)^{\frac{m(p'+\d)}{\a}},Q}\Big)^{\a}
\Big(\|v^{-1/p}\|_{L^{p'},Q}\|u^{1/p}\|_{L^p(\log L)^{\frac{p-1+\d}{1-\a}},Q}\Big)^{1-\a}.
\end{equation}

Fix $\a$ in such a way that $\frac{p-1}{1-\a}=(m+1)p-1$. Then $\a=\frac{mp}{mp+p-1}$ and $\frac{mp'}{\a}=mp'+1$.
Hence, taking $\d$ such that $\frac{\d m}{\a}\le \e$ and $\frac{\d}{1-\a}\le \e$, we obtain that the expression in (\ref{bexpr}) is
bounded by the right-hand side of (\ref{itrm}), and therefore, the proof is complete.
\end{proof}

\section{A necessary condition}\label{sec:necCond}
\subsection{On a theorem of P. Jones}
In \cite{J}, P. Jones established a rather general result allowing to decide whether a function from $BMO(\O)$, where $\O\subset {\mathbb R}^n$ is
a connected open set, can be extended to a function from $BMO({\mathbb R}^n)$.

We will need a particular version of this result when $\O=Q$ is a cube. Observe that the proof of a general result in \cite{J} is long and involved.
In the particular case we need, it is much simpler. Therefore we outline the proof below.

\begin{theorem}\label{bmoj} Assume that $f\in BMO$, and let $R$ be a cube such that $f_R=0$. Then there exists a function $\f$ such that $\f=f$ on $R$, $\f=0$ on ${\mathbb R}^n\setminus 2R$
and
$$
\|\f\|_{BMO}\lesssim \|f\|_{BMO}.
$$
\end{theorem}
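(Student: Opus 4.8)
The plan is to build $\f$ by an explicit modification of $f$ near the boundary of $R$, using the classical fact that one can interpolate between $f$ and the constant $0$ on a thin collar around $\pa R$ without destroying the $BMO$ bound. Concretely, I would first note that after rescaling we may assume $R=[0,1)^n$. Choose a Lipschitz cutoff $\psi$ with $\psi=1$ on $R$, $\psi=0$ off $\tfrac32 R$ (say), $0\le\psi\le 1$, and $\|\nabla\psi\|_\infty\lesssim 1$; the candidate is $\f=\psi f$ on $2R$ (with a standard extension of $f$ from $R$ to $2R$ keeping the $BMO$ norm, or better, I would work with $f$ already given on all of ${\mathbb R}^n$) and $\f=0$ on ${\mathbb R}^n\setminus 2R$. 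The whole content is the estimate $\|\f\|_{BMO}\lesssim\|f\|_{BMO}$.

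The key steps, in order: (1) Recall the product rule for oscillation: for a cube $Q$,
$$
\frac{1}{|Q|}\int_Q|\psi f-(\psi f)_Q|\lesssim \|\psi\|_{L^\infty}\,\frac{1}{|Q|}\int_Q|f-f_Q| + \frac{1}{|Q|}\int_Q|\psi-\psi_Q|\,|f-f_Q| + |f_Q|\,\frac{1}{|Q|}\int_Q|\psi-\psi_Q|.
$$
The first term is $\lesssim\|f\|_{BMO}$. (2) For cubes $Q$ with $\ell_Q\gtrsim 1$ that meet $2R$, use $|f_Q|\lesssim |f_{Q}-f_R|\lesssim\|f\|_{BMO}\log(e+\ell_Q/\ell_R)\lesssim\|f\|_{BMO}$ together with $\tfrac1{|Q|}\int_Q|\psi-\psi_Q|\lesssim 1$ — in fact for very large $Q$ one gets an extra smallness factor $(\ell_R/\ell_Q)$ from the support of $\psi$, which controls the John–Nirenberg logarithm. (3) For small cubes $Q$, $\ell_Q\lesssim 1$: bound $\tfrac1{|Q|}\int_Q|\psi-\psi_Q|\lesssim \|\nabla\psi\|_\infty\ell_Q\lesssim\ell_Q$, and use $|f_Q|\le |f_Q-f_R|+|f_R|\lesssim\|f\|_{BMO}\log(e+1/\ell_Q)$ and the local-integrability bound $\tfrac1{|Q|}\int_Q|\psi-\psi_Q||f-f_Q|\lesssim \ell_Q\|f\|_{BMO}$ by a similar argument; the factor $\ell_Q$ again beats the logarithm. (4) For cubes $Q$ disjoint from $\mathrm{supp}\,\psi$, $\f\equiv 0$ on $Q$ and the oscillation is $0$, so nothing to check; for cubes $Q$ straddling $\pa(\tfrac32 R)$ the same collar estimates as in (2)–(3) apply since $\psi$ is globally Lipschitz.

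The main obstacle I anticipate is step (2)/(3): controlling the term $|f_Q|\cdot\tfrac1{|Q|}\int_Q|\psi-\psi_Q|$, because $|f_Q|$ is not bounded by $\|f\|_{BMO}$ — it grows logarithmically in the ratio of sidelengths relative to $R$ (this is exactly where the hypothesis $f_R=0$ enters). The resolution is that $\psi$ is not merely bounded but \emph{Lipschitz with the correct scale}: its oscillation over $Q$ is at most $\min(1,\ell_Q/\ell_R)$ (small cubes: gradient bound; large cubes: $\psi$ is eventually constant $=0$ so $\tfrac1{|Q|}\int_Q|\psi-\psi_Q|\lesssim |\tfrac32 R|/|Q|$), and in both regimes this geometric decay dominates the $\log(e+\ell_Q/\ell_R)$ growth of $|f_Q|$, giving a uniform bound. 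So the proof is really the observation that a Lipschitz cutoff at the right scale exactly compensates for the logarithmic unboundedness of the mean of a $BMO$ function, and assembling the three terms above. I would also remark that one may take $2R$ as in the statement (rather than $\tfrac32 R$) with no change; the dilation factor only affects constants.
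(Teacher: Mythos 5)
Your argument is correct in substance, but it takes a genuinely different route from the paper's. The paper follows P.~Jones's extension scheme: it takes Whitney decompositions of $R$ and of its complement, defines $\varphi$ on each sufficiently small Whitney cube of the complement to be the average of $f$ over the nearest Whitney cube of $R$ (and $0$ elsewhere), and then verifies the BMO bound by a case analysis on test cubes. You instead set $\varphi=\psi f$ with a Lipschitz cutoff $\psi$ adapted to $R$, and the whole proof collapses to the estimate $|f_Q|\cdot\frac{1}{|Q|}\int_Q|\psi-\psi_Q|\lesssim\|f\|_{BMO}$, which holds because the oscillation of $\psi$ over $Q$ is $\lesssim\min\big(\ell_Q/\ell_R,(\ell_R/\ell_Q)^n\big)$ while the normalization $f_R=0$ gives $|f_Q|\lesssim\big(1+|\log(\ell_Q/\ell_R)|\big)\|f\|_{BMO}$ for every cube meeting a fixed dilate of $R$; cubes missing $\text{supp}\,\psi$ contribute nothing. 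This is shorter and more elementary, but it uses essentially that $f$ is defined and BMO on all of ${\mathbb R}^n$ (which the statement does grant), whereas the Whitney construction is the one that survives in the genuine extension problem from $BMO(R)$, which is why the paper keeps Jones's scheme. Two small points to tidy: in your step (2) the chain $|f_Q|\lesssim\|f\|_{BMO}\log(e+\ell_Q/\ell_R)\lesssim\|f\|_{BMO}$ is false as written for large $Q$ (the logarithm is unbounded); what saves you is exactly the decay $(\ell_R/\ell_Q)^n$ of the oscillation of $\psi$, which you state afterwards, so that should be the official bound for large cubes. Also drop the aside about ``extending $f$ from $R$ to $2R$'': read literally it would be circular, and it is unnecessary since $f\in BMO({\mathbb R}^n)$ by hypothesis.
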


\begin{remark}\label{gr}
In the one-dimensional case this statement can be found in \cite[Ex. 3.1.10, p. 167]{G2}.
\end{remark}

\begin{remark} The proof we give is an adaptation of the method in \cite{J}. In particular, as in \cite{J}, we shall make use of the Whitney covering theorem.
We refer to \cite[p. 348]{BS} for the statement and various properties of Whitney's cubes.
\end{remark}

\begin{proof}[Proof of Theorem \ref{bmoj}]
Let $E=\{Q_j\}$ and $E'=\{Q_j'\}$ be the Whitney coverings of the interiors of ${\mathbb R}^n\setminus R$ and $R$, respectively.

Take $\a=\a_n$ with the following property: for every $Q_j\in E$ with $\ell_{Q_j}\le \a\ell_R$ we have $Q_j\subset 2R$, and, moreover, there exists the
nearest cube $P_j'\in E'$ such that $|P_j'|\ge |Q_j|$. Denote
$$F=\{Q_j\in E: \ell_{Q_j}\le \a\ell_R\},$$
and define
$$\f=f\chi_R+\sum_{Q_j\in F}f_{P_j'}\chi_{Q_j}.$$
Observe that each cube $P_j'\in E'$ may appear in this sum not more than $k=k_n$ times.

Denote $\tilde R=R\cup(\cup_{Q_j\in F}Q_j)$. To prove that $\|\f\|_{BMO}\lesssim \|f\|_{BMO}$, it suffices to show that for every cube~$Q$, there exists $c\in {\mathbb R}$
such that
\begin{equation}\label{osc}
\frac{1}{|Q|}\int_{Q\cap R}|f-c|+\sum_{Q_j\in F}\frac{|Q\cap Q_j|}{|Q||P'_j|}\Big|\int_{P'_j}(f-c)\Big|+\frac{|Q\setminus \tilde R|}{|Q|}|c|
\lesssim \|f\|_{BMO}.
\end{equation}

Denote $A=\{Q_j\in F: Q\cap Q_j\not=\emptyset\}$. If $A=\emptyset$, then either $Q\subset R$ or $Q\subset {\mathbb R}^n\setminus {\tilde R}$,
and this case is trivial. Therefore, suppose that $A\not=\emptyset$.  There are two main cases.

\begin{enumerate}
\renewcommand{\labelenumi}{(\roman{enumi})}
\item Suppose that $\ell_{Q_j}\le 4\ell_{Q}$ for every $Q_j\in A$.

If $|Q\setminus \tilde R|>0$, then
there exists $Q_j\in A$ such that $\text{dist}(Q_j, \partial R)\sim \text{diam} R$, and hence
$|R|\lesssim |Q|$. In this case we take $c=f_R=0$.  Then the left-hand side of (\ref{osc}) is bounded by
$$\frac{1}{|R|}\int_{R}|f-f_R|+\frac{1}{|R|}\sum_{Q_j\in F}\int_{P'_j}|f-f_R|\lesssim \frac{1}{|R|}\int_{R}|f-f_R|\lesssim \|f\|_{BMO}.$$

Suppose that $|Q\setminus \tilde R|=0$. It follows from the definition of $P_j'$ that for every $Q_j\in A$ with $\ell_{Q_j}\le 4\ell_{Q}$ the corresponding
$P_j'$ is contained in $\b Q$, where $\b=\b_n$. Therefore, taking $c=f_{\b Q}$, we obtain that the left-hand side of (\ref{osc}) is bounded by
$$\frac{1}{|Q|}\int_{\b Q}|f-f_{\b Q}|+\frac{1}{|Q|}\sum_{Q_j\in F}\int_{P'_j}|f-f_{\b Q}|\lesssim \frac{1}{|Q|}\int_{\b Q}|f-f_{\b Q}|\lesssim \|f\|_{BMO}.$$

\item Suppose that there exists $Q_{j_0}\in A$ such that $\ell_Q<\frac{1}{4}\ell_{Q_{j_0}}$. Then $Q\subset \frac{3}{2}Q_{j_0}$, and hence $Q\cap R=\emptyset$. It follows from the properties of Whitney cubes that every other cube $Q_j\in A$ touches $Q_{j_0}$, and therefore $|Q_j|\sim |Q_{j_0}|$,
and the corresponding cube $P_j'$ is contained in $\ga Q_{j_0}$, where $\ga=\ga_n$.

Now, if $|Q\setminus \tilde R|=0$, we take $c=f_{\ga Q_{j_0}}$. Then the left-hand side of (\ref{osc}) is bounded by
$$\frac{1}{|Q_{j_0}|}\sum_{Q_j\in F}\int_{P'_j}|f-f_{\ga Q_{j_0}}|\lesssim
\frac{1}{|Q_{j_0}|}\int_{\ga Q_{j_0}}|f-f_{\ga Q_{j_0}}|\lesssim \|f\|_{BMO}.
$$

If $|Q\setminus \tilde R|\not=0$, then $|Q_{j_0}|\sim |R|$. In this case, taking $c=f_R$, we obtain that the left-hand side of (\ref{osc}) is bounded by
$$
\frac{1}{|Q_{j_0}|}\int_{\ga Q_{j_0}}|f-f_{R}|\lesssim \frac{1}{|R|}\int_{2\ga R}|f-f_{2\ga R}|\lesssim \|f\|_{BMO}.
$$
\end{enumerate}

This completes the proof of (\ref{osc}), and therefore, the theorem is proved.
\end{proof}

\subsection{Proof of Theorem \ref{neccond}} We start by observing that, by duality, the estimates
$$
\|T_{b}^{m}f\|_{L^{p,\infty}(u)}\lesssim\|b\|_{BMO}\|f\|_{L^{p}(v)}
$$
and
\begin{equation}\label{eqform}
\|(T_b^m)^*f\|_{L^{p'}(v^{1-p'})}\lesssim \|b\|_{BMO}\|f/u\|_{L^{p',1}(u)}
\end{equation}
are equivalent.

Note that
$$T_b^mf(x)=\int_{{\mathbb R}^n}\big(b(x)-b(y)\big)^mK(x,y)f(y)dy\quad(x\not\in\text{supp}\,f).$$
Hence $(T_b^m)^*$ is essentially the same operator but associated with $\tilde K(x,y)=K(y,x)$.
Since $\tilde K$ is non-degenerate (by our definition of a non-degenerate Calder\'on-Zygmund operator),
it suffices to prove the theorem assuming that (\ref{eqform}) holds for $T_b^m$ instead of $(T_b^m)^*$.

Next, by the reasons explained in Section 2.3, (\ref{necbump}) is equivalent to
\begin{equation}\label{bump}
\sup_{Q}\left(\frac{1}{|Q|}\int_Qu\right)\left(\frac{1}{|Q|}\int_Qv^{1-p'}\log^{mp'}\Big(\frac{v^{1-p'}}{(v^{1-p'})_Q}+e\Big)\right)^{p-1}<\infty.
\end{equation}

Let $Q$ be an arbitrary cube. Define
$$g(x)=\log^+\left(\frac{M(v^{1-p'}\chi_Q)(x)}{(v^{1-p'})_Q}\right).$$
It is well known \cite{CR} that $g\in BMO$ and $\|g\|_{BMO}\lesssim 1$.
Also, using that (see, e.g., \cite[Ex. 2.1.5, p. 100]{G1})
$$\int_Q(M(f\chi_Q))^{\d}\lesssim \left(\frac{1}{|Q|}\int_Q|f|\right)^{\d}|Q|\quad(0<\d<1),$$
we obtain
\begin{equation}\label{aver}
g_Q\lesssim 1.
\end{equation}

By Theorem \ref{bmoj}, there exists a function $\f$ such that $\f=g-g_Q$ on $Q$ and $\f=0$ outside $2Q$, and
$\|\f\|_{BMO}\lesssim 1$.
Let $B$ be the ball concentric with $Q$ or radius $r=\text{diam}\,Q$.
In accordance with Proposition \ref{h}, take the corresponding ball $\tilde B$ of the same radius
at distance $\text{dist}(B,\tilde B)\simeq Ar$, where $A\ge 3$ will be chosen later.

Let $f$ be a non-negative function supported in $\tilde B$. Set $b=\f$. Observe that $b$ is supported in $B$,
and hence $b=0$ on $\tilde B$. Thus, for $x\in B$,
\begin{eqnarray*}
T_b^mf(x)&=&\int_{\tilde B}(b(x)-b(y))^mK(x,y)f(y)dy\\
&=&\f^m(x)\int_{\tilde B}K(x,y)f(y)dy.
\end{eqnarray*}
Therefore, by (\ref{eqform}) (with $T_b^m$),
$$
\int_{B}\Big|\int_{\tilde B}K(x,y)f(y)dy\Big|^{p'}|\f|^{mp'}v^{1-p'}\, dx\lesssim \|f\chi_{\tilde B}/u\|_{L^{p',1}(u)}^{p'}.
$$

From this, and by Proposition \ref{h},
\begin{eqnarray*}
&&\frac{1}{A^n} \Big(\int_B|\f|^{mp'}v^{1-p'}\Big)^{1/p'}f_{\tilde B}\lesssim\left(\int_{B}\Big|\int_{\tilde B}K(x_0,y_0)f(y)dy\Big|^{p'}|\f|^{mp'}v^{1-p'}\,dx\right)^{1/p'}\\
&&\le \left(\int_{B}\Big(\int_{\tilde B}|K(x,y)-K(x_0,y_0)|f(y)dy\Big)^{p'}|\f|^{mp'}v^{1-p'}\,dx\right)^{1/p'}\\
&&+\left(\int_{B}\Big|\int_{\tilde B}K(x,y)f(y)dy\Big|^{p'}|\f|^{mp'}v^{1-p'}\, dx\right)^{1/p'}\\
&&\lesssim \frac{\e_A}{A^n}\Big(\int_B|\f|^{mp'}v^{1-p'}\Big)^{1/p'}f_{\tilde B}+\|f\chi_{\tilde B}/u\|_{L^{p',1}(u)}.
\end{eqnarray*}
Therefore, taking $A$ large enough, we obtain
\begin{equation}\label{fla}
\Big(\int_B|\f|^{mp'}v^{1-p'}\Big)^{1/p'}f_{\tilde B}\lesssim \|f\chi_{\tilde B}/u\|_{L^{p',1}(u)}.
\end{equation}

Setting here $f=u$ and using that $\|\chi_{\tilde B}\|_{L^{p',1}(u)}\simeq (\int_{\tilde B}u)^{1/p'}$ yields
$$\left(\frac{1}{|\tilde B|}\int_{\tilde B}u\right)
\left(\int_B|\f|^{mp'}v^{1-p'}\right)^{p-1}\lesssim 1.$$

Note that $|\tilde B|\simeq |Q|$ and $Q\subset \ga\tilde B$, where $\ga$ depends only on $A$ and $n$. Therefore, since $u$ is
doubling (recall that this means that there exists $c>0$ such that $u(2Q)\le cu(Q)$ for every cube $Q$), we obtain
\begin{equation}\label{gq}
\left(\frac{1}{|Q|}\int_{Q}u\right)
\left(\int_Q|g-g_Q|^{mp'}v^{1-p'}\right)^{p-1}\lesssim 1.
\end{equation}

We now observe that the same proof with the choice $b=\chi_{B}$ shows that (\ref{gq}) holds with $m=0$. Combining this with (\ref{aver})
yields
$$\left(\frac{1}{|Q|}\int_{Q}u\right)
\left(\int_Qg^{mp'}v^{1-p'}\right)^{p-1}\lesssim 1,$$
which, in turn, implies (\ref{bump}), and therefore, the theorem is proved.

\begin{remark}\label{noteond} As we have mentioned in the Introduction, if $T$ is non-degenerate Calder\'on-Zygmund operator with Dini-continuous kernel,
and if both weights $u$ and $v^{1-p'}$ are doubling, then $T_{b}^{m+1}:L^p(v)\to L^p(u)$ implies $T_b^{m}: L^p(v)\to L^p(u)$. We add two remarks here. First, it is easy to see that under the above assumptions
we actually obtain that $A_{\mathcal S}^{m+1}: L^p(v)\to L^p(u)$, and therefore in the conclusion $T_b^{m}: L^p(v)\to L^p(u)$, $T$ can be replaced by any Calder\'on-Zygmund operator with Dini-continuous kernel.

Second, the assumptions that both $u$ and $v^{1-p'}$ are doubling can be replaced by that either $u$ or $v^{1-p'}$ belongs to $A_{\infty}$. Indeed, assume, for example, that $u\in A_{\infty}$ and that $T_{b}^{m+1}:L^p(v)\to L^p(u)$. Then, by Theorem \ref{neccond},
$$\sup_{Q}\|u^{1/p}\|_{L^p,Q}\|v^{-1/p}\|_{L^{p'}(\log L)^{(m+1)p'},Q}<\infty.$$
Since $u\in A_{\infty}$, it satisfies the reverse H\"older inequality, and therefore, the above condition can be self-improved to
$$\sup_{Q}\|u^{1/p}\|_{L^{rp},Q}\|v^{-1/p}\|_{L^{p'}(\log L)^{(m+1)p'},Q}<\infty$$
with some $r>1$.
It is easy to see that this condition is stronger that the assumptions of Theorem \ref{sepbumex}, and therefore $T_b^{m}: L^p(v)\to L^p(u)$.
\end{remark}

\section{On a converse to Bloom's theorem} \label{sec:Bloom}
Throughout this section we assume that $T$ is a non-degenerate Calder\'on-Zygmund operator with Dini-continuous kernel, and $m\in {\mathbb N}$.

As we announced in the introduction, we obtain several partial results related to Conjecture \ref{conbl}, being the first of them the following theorem.

\begin{theorem}\label{fpc} Let $\la,\mu\in A_p, p>1$. Let $\eta$ be an arbitrary weight such that
\begin{equation}\label{cond1con}
b\in BMO_{\eta}\Rightarrow\|T_b^m\|_{L^p(\la)}\lesssim \|b\|_{BMO_{\eta}}^m\|f\|_{L^p(\mu)}
\end{equation}
and
\begin{equation}\label{cond2con}
\|T_b^mf\|_{L^p(\la)}\lesssim \|f\|_{L^p(\mu)}\Rightarrow b\in BMO_{\eta}.
\end{equation}
Then $\eta\simeq \big(\frac{\mu}{\la}\big)^{1/pm}$ almost everywhere.
\end{theorem}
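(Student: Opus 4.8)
The plan is to exploit the fact that, since $\la,\mu\in A_p$, Theorem~\ref{bl} is already available for the \emph{specific} weight $\widetilde\eta:=(\mu/\la)^{1/pm}$, and to show that the two hypotheses on $\eta$ force the spaces $BMO_\eta$ and $BMO_{\widetilde\eta}$ to coincide; the pointwise identity $\eta\simeq\widetilde\eta$ is then extracted from this coincidence.

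First I would prove that $BMO_\eta=BMO_{\widetilde\eta}$ as sets. If $b\in BMO_\eta$, then by (\ref{cond1con}) the operator $T_b^m$ is bounded $L^p(\mu)\to L^p(\la)$, and the necessity part of Theorem~\ref{bl}, taken with the choice $\widetilde\eta$ for the weight there, yields $b\in BMO_{\widetilde\eta}$; conversely, if $b\in BMO_{\widetilde\eta}$, the sufficiency part of Theorem~\ref{bl} gives the boundedness of $T_b^m$, and then (\ref{cond2con}) yields $b\in BMO_\eta$. Hence $BMO_\eta=BMO_{\widetilde\eta}$. Both are Banach spaces modulo constants, and the natural inclusion (which on functions-modulo-constants is the identity) has closed graph: convergence in $BMO_w$ forces, for every cube $Q$, convergence modulo constants in $L^1(Q)$, so two $BMO$-limits of a single sequence can differ only by an additive constant. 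By the closed graph theorem (equivalently the open mapping theorem applied to the identity bijection), $\|b\|_{BMO_\eta}\simeq\|b\|_{BMO_{\widetilde\eta}}$ with constants independent of $b$.

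The second and decisive step is to upgrade this norm equivalence to $\eta(Q)\simeq\widetilde\eta(Q)$ for every cube $Q$; the theorem then follows from the Lebesgue differentiation theorem, since $\eta(x)=\lim_{Q\ni x,\,|Q|\to0}\eta(Q)/|Q|\simeq\lim_{Q\ni x,\,|Q|\to0}\widetilde\eta(Q)/|Q|=\widetilde\eta(x)$ a.e. Writing $\mathrm{MO}(b;Q)=|Q|^{-1}\int_Q|b-b_Q|$, so that $\|b\|_{BMO_w}=\sup_Q\frac{|Q|}{w(Q)}\mathrm{MO}(b;Q)$, the key ingredient is the construction, for an arbitrary weight $w$ and a fixed cube $Q$, of a bounded test function $b_{Q,w}$ — a $w$-adapted bump supported in $Q$; in one coordinate one may take the $w$-weighted tent $x\mapsto|Q|^{-1}\min\{w(Q\cap\{t\le x\}),\,w(Q\cap\{t\ge x\})\}$, $t$ denoting the first coordinate — satisfying $\mathrm{MO}(b_{Q,w};P)\lesssim w(P)/|P|$ for \emph{every} cube $P$ while $\mathrm{MO}(b_{Q,w};Q)\gtrsim w(Q)/|Q|$. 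For such a function $\|b_{Q,w}\|_{BMO_w}\simeq1$, and the defining supremum is, up to absolute constants, attained at $Q$ itself. Applying this with $w=\eta$ and invoking the norm equivalence,
\[
\frac{\eta(Q)}{\widetilde\eta(Q)}\;\lesssim\;\frac{|Q|}{\widetilde\eta(Q)}\,\mathrm{MO}(b_{Q,\eta};Q)\;\le\;\|b_{Q,\eta}\|_{BMO_{\widetilde\eta}}\;\lesssim\;\|b_{Q,\eta}\|_{BMO_\eta}\;\simeq\;1,
\]
so $\eta(Q)\lesssim\widetilde\eta(Q)$; running the same argument with $w=\widetilde\eta$ gives $\widetilde\eta(Q)\lesssim\eta(Q)$.

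I expect the main obstacle to be precisely the construction of $b_{Q,w}$: one needs it, uniformly over \emph{arbitrary} — in particular non-doubling — weights $w$, to have its weighted oscillation concentrated at the prescribed cube. It must be bounded, of height $\simeq w(Q)/|Q|$, so that very large cubes containing $Q$ do not inflate $\|b_{Q,w}\|_{BMO_w}$, and it must be essentially flat on sub-cubes carrying little $w$-mass so that small cubes do not either; verifying $\mathrm{MO}(b_{Q,w};P)\lesssim w(P)/|P|$ for cubes $P$ that straddle $Q$ (partially overlapping it, with side comparable to or larger than that of $Q$) is the delicate case, and the passage to $n$ dimensions has to be handled with some care as well. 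An alternative to the closed graph step would be to make (\ref{cond2con}) and the necessity in Theorem~\ref{bl} quantitative and combine the resulting inequalities directly, but the closed graph argument sidesteps the minor nuisance that $b\mapsto T_b^m$ is not linear when $m\ge2$.
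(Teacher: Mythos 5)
Your first two steps are sound: combining \eqref{cond1con}--\eqref{cond2con} with the two halves of Theorem~\ref{bl} (available since $\la,\mu\in A_p$) does give $BMO_\eta=BMO_{\widetilde\eta}$ with $\widetilde\eta=(\mu/\la)^{1/pm}$, and the closed-graph upgrade to $\|b\|_{BMO_\eta}\simeq\|b\|_{BMO_{\widetilde\eta}}$ is legitimate. The genuine gap is the step you yourself flag as decisive: the existence, for an \emph{arbitrary} weight $w$ and every cube $Q$, of a bump $b_{Q,w}$ with $\|b_{Q,w}\|_{BMO_w}\lesssim 1$ and $\mathrm{MO}(b_{Q,w};Q)\gtrsim w(Q)/|Q|$. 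The explicit tent you propose does not have the lower bound. Take $n=1$, $Q=[0,1]$ and $w=1+\frac{N}{2\varepsilon}\chi_{[\frac12-\varepsilon,\frac12+\varepsilon]}$ with $\varepsilon$ small: your function is a tent of height about $N/2$ supported on an interval of length $2\varepsilon$, so $\mathrm{MO}(b_{Q,w};Q)\simeq N\varepsilon$, while $w(Q)/|Q|\simeq N$; the ratio degenerates as $\varepsilon\to 0$ (the upper bound $\mathrm{MO}(b_{Q,w};P)\lesssim w(P)/|P|$ does hold, but that is not the issue). In this example the correct bump must have height $\simeq N/\varepsilon$, i.e.\ it must be tuned to the scale at which $w$ concentrates inside $Q$; for a general non-doubling $w$ this requires a stopping-time type construction (and an $n$-dimensional version) that you have not supplied, and the quantitative statement you need is itself a nontrivial lemma that does not follow from anything you cite. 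Without it, the chain $\eta(Q)/\widetilde\eta(Q)\lesssim\|b_{Q,\eta}\|_{BMO_{\widetilde\eta}}\lesssim 1$ is unsupported, so the passage from norm equivalence to $\eta(Q)\simeq\widetilde\eta(Q)$ (and hence to $\eta\simeq\widetilde\eta$ a.e.) is not proved.

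Two remarks on how to close the gap. Within your own framework you do not need the quantitative bump at all: once $BMO_\eta=BMO_{\widetilde\eta}$ as sets, Lemma~\ref{bmo} of the paper, applied with $(\eta_1,\eta_2)=(\eta,\widetilde\eta)$ and then with the roles reversed, shows that both $\eta/\widetilde\eta$ and $\widetilde\eta/\eta$ are in $L^\infty$, which is exactly $\eta\simeq\widetilde\eta$ a.e.; this also makes the closed graph theorem unnecessary. The paper's own proof is different again and avoids space equality: one direction, $\la\eta^{pm}\lesssim\mu$ a.e., is obtained by testing \eqref{cond1con} on $b=\eta\chi_B$ and exploiting the non-degeneracy of the kernel (Proposition~\ref{h}, Corollary~\ref{pointmu}), which works for arbitrary $\la,\mu$; the converse is by contradiction: if $\widetilde\eta/\eta\notin L^\infty$, the sparse domination of Lemma~\ref{dual1} together with $\la\widetilde\eta^{kp}=\la^{1-k/m}\mu^{k/m}\in A_p$ gives $\|T_b^mf\|_{L^p(\la)}\lesssim\|b\|_{BMO_{\widetilde\eta}}^m\|f\|_{L^p(\mu)}$ for all $b\in BMO_{\widetilde\eta}$, while Lemma~\ref{bmo} produces $b\in BMO_{\widetilde\eta}\setminus BMO_\eta$, contradicting \eqref{cond2con}.
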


In the following theorem we assume that $\eta=1$.

\begin{theorem}\label{spc} Let $p>1$. Let $\la$ and $\mu$ be the weights satisfying either one of the following conditions:
\begin{enumerate}
\renewcommand{\labelenumi}{(\roman{enumi})}
\item $\la\in A_p$ and $\mu$ is an arbitrary weight;
\item $\la\in A_{\infty}$ and $\mu^{1-p'}\in A_{\infty}$.
\end{enumerate}
Suppose also that
\begin{equation}\label{cond1un}
b\in BMO \Rightarrow\|T_b^m\|_{L^p(\la)}\lesssim \|b\|_{BMO}^m\|f\|_{L^p(\mu)}
\end{equation}
and
\begin{equation}\label{cond2un}
\|T_b^mf\|_{L^p(\la)}\lesssim \|f\|_{L^p(\mu)}\Rightarrow b\in BMO.
\end{equation}
Then $\la\simeq\mu$ almost everywhere and $\la,\mu\in A_p$.
\end{theorem}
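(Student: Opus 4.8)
The strategy is to feed the two hypotheses into the machinery already developed in the paper. From \eqref{cond1un} together with the necessary condition of Theorem \ref{neccond} applied in both directions, we will extract a two-sided bump-type comparison between $\la$ and $\mu$; from \eqref{cond2un} together with Theorem \ref{fpc} (applied with $\eta=1$) we will obtain that $1\simeq(\mu/\la)^{1/pm}$, i.e. $\la\simeq\mu$ a.e.; and then the bump information will force $\la\in A_p$. Let me describe each step.

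First, I would observe that \eqref{cond2un} says precisely that the implication ``$\|T_b^mf\|_{L^p(\la)}\lesssim\|f\|_{L^p(\mu)}\Rightarrow b\in BMO=BMO_1$'' holds, while \eqref{cond1un} is the sufficiency direction with $\eta=1$. Under hypothesis (i), $\la\in A_p$ and $\eta=1$ is an arbitrary weight in the sense of Theorem \ref{fpc}; but Theorem \ref{fpc} requires \emph{both} weights in $A_p$. So the first real task is to upgrade: I would use \eqref{cond1un} and Theorem \ref{neccond} (the latter gives a genuine necessary bump condition from the weak-type, hence strong-type, bound) to deduce
\[
\sup_Q\|\la^{1/p}\|_{L^p,Q}\,\|\mu^{-1/p}\|_{L^{p'}(\log L)^{mp'},Q}<\infty,
\]
provided $\la$ is doubling — which holds in case (i) since $\la\in A_p\subset A_\infty$, and in case (ii) by assumption $\la\in A_\infty$. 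By the duality remark following Theorem \ref{neccond} (using $\eqref{cond1un}$ for the adjoint $T_b^m=(T_b^m)^*$-type operator, whose kernel is also non-degenerate), and the doubling of $\mu^{1-p'}$ (automatic in (i) from $\mu\in A_p$, assumed in (ii)), we also get the dual bump estimate with the roles of $\la$ and $\mu$ swapped. In particular the plain $A_p$ inequality for the couple $(\la,\mu)$ holds, and symmetrically for $(\mu,\la)$, giving $[\la,\mu^{1-p'}]_{A_p}<\infty$ and $[\mu,\la^{1-p'}]_{A_p}<\infty$.

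Second, I would invoke Theorem \ref{fpc} — but to apply it I need $\mu\in A_p$ too. The way around this: once we know $\la\simeq\mu$ a.e.\ (the eventual conclusion), membership $\la\in A_p\Leftrightarrow\mu\in A_p$ is automatic, so really the heart is to prove $\la\simeq\mu$ and $\la\in A_p$ directly. For case (i), where $\la\in A_p$ already, I would argue as follows: the necessary condition above, combined with the reverse H\"older inequality for $\la\in A_\infty$ (as in Remark \ref{noteond}), self-improves to $\sup_Q\|\la^{1/p}\|_{L^{rp},Q}\|\mu^{-1/p}\|_{L^{p'}(\log L)^{mp'},Q}<\infty$ for some $r>1$; then by Theorem \ref{sepbumex} (or its corollary) we conclude $\|T_b^m\|_{L^p(\mu^{1-p'}\text{-type}})<\infty$, i.e.\ the \emph{reverse} Bloom bound $\|T_b^mf\|_{L^p(\mu)}\lesssim\|f\|_{L^p(\la)}$ holds for $b\in BMO$ — wait, more carefully, I would apply Theorem \ref{fpc} with the pair $(\la,\mu)$ both now known to be in $A_p$, since the bump conditions plus $A_\infty$ self-improvement put $\mu$ in $A_p$; Theorem \ref{fpc} with $\eta\equiv1$ then gives $1\simeq(\mu/\la)^{1/pm}$ a.e., hence $\la\simeq\mu$. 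For case (ii), the $A_\infty$ hypotheses on $\la$ and $\mu^{1-p'}$ supply the doubling needed for Theorem \ref{neccond} and its dual, and the same $A_\infty$ self-improvement (reverse H\"older in both directions) promotes the bump conditions into the hypotheses of Theorem \ref{sepbumex}, which in turn lets Theorem \ref{fpc} apply once $\la,\mu\in A_p$ is established; and $\la,\mu\in A_p$ follows because, after $\la\simeq\mu$, the plain $A_p$ bump for $(\la,\mu)$ reduces to the $A_p$ condition for $\la$ alone.

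The main obstacle is the circularity between ``$\la,\mu\in A_p$'' (needed to invoke Theorem \ref{fpc}) and ``$\la\simeq\mu$'' (the output of Theorem \ref{fpc}). I expect to break it by using the weak-type necessary condition (Theorem \ref{neccond}) together with the $A_\infty$ self-improvement \emph{before} appealing to Theorem \ref{fpc}: the self-improved two-weight bump condition, being strictly stronger than a genuine $A_p$ condition for the single weight $\la$ once one also has the dual bump, should already force $\la\in A_p$ and then $\mu\in A_p$, at which point Theorem \ref{fpc} applies cleanly and yields $\la\simeq\mu$. Making the bookkeeping of ``which doubling/$A_\infty$ hypothesis supplies which ingredient'' precise in each of the two cases, and checking that the constants in Theorem \ref{neccond}'s dual formulation really do follow from \eqref{cond1un} applied to both $T$ and its transpose, will be the technical core.
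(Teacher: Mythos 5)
There is a genuine gap, and it lies exactly at the point you flag as ``the technical core''. Your plan extracts information only from \eqref{cond1un} (via Theorem \ref{neccond} and its dual) and then hopes that the resulting bump conditions ``force'' $\mu\in A_p$ so that Theorem \ref{fpc} can be applied with $\eta=1$. This cannot work. First, in case (i) the weight $\mu$ is \emph{arbitrary}: your parenthetical ``automatic in (i) from $\mu\in A_p$'' assumes precisely what has to be proved, and without doubling of $\mu^{1-p'}$ the dual form of Theorem \ref{neccond} is unavailable, so you only get the one-sided condition $\sup_Q\|\la^{1/p}\|_{L^p,Q}\|\mu^{-1/p}\|_{L^{p'}(\log L)^{mp'},Q}<\infty$. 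Second, and more fundamentally, no condition that is \emph{necessary} for \eqref{cond1un} can ever yield $\mu\in A_p$ or $\mu\lesssim\la$: take $\la=1$ and $\mu=e^{|x|}$. Then $\|\mu^{-1/p}\|_{B,Q}\lesssim 1$ for any Young function $B$, so the bump hypotheses of Theorem \ref{extbctbm} hold and \eqref{cond1un} is true, while $\mu\notin A_p$ and $\mu\not\lesssim\la$; this pair also satisfies your self-improved bump condition, so the claim that it ``should already force $\la\in A_p$ and then $\mu\in A_p$'' is false. What rules out such pairs is \eqref{cond2un}, and to exploit \eqref{cond2un} one must actually exhibit a function $b\notin BMO$ for which $T_b^m:L^p(\mu)\to L^p(\la)$ is bounded. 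Your proposal never produces such a $b$, so the circularity you acknowledge (Theorem \ref{fpc} needs $\la,\mu\in A_p$, which is part of the conclusion) is never broken.

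The paper's proof is built around exactly this missing construction. The easy inequality $\la\lesssim\mu$ comes from the non-degeneracy argument (Corollary \ref{pointmu}). For the converse one assumes $\mu/\la\notin L^\infty$ and constructs an \emph{unbounded} auxiliary weight $u$ such that $\|A_{\mathcal S}(A^m_{\mathcal S,u}f)\|_{L^p(\la)}\lesssim\|f\|_{L^p(\mu)}$; this is where the real work is, namely Lemma \ref{ap} (inserting an unbounded $u$ with $(\la u,\mu)\in A_p$), Neugebauer's Theorem \ref{N}, and the interpolation $\la^{1-k/m}w^{k/m}\in A_p$. By Lemma \ref{dual1} this gives $\|T_b^mf\|_{L^p(\la)}\lesssim\|b\|_{BMO_u}^m\|f\|_{L^p(\mu)}$ for all $b\in BMO_u$, and Lemma \ref{bmo} supplies $b\in BMO_u\setminus BMO$, contradicting \eqref{cond2un}. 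Case (ii) is reduced to this scheme via Corollary \ref{apc}, reverse H\"older, and Theorem \ref{N} (inserting $\nu\in A_p$ with $\la\lesssim\nu\lesssim\mu$). None of these ingredients appears in your outline, so as it stands the proposal does not prove the theorem.
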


As we will see below, Theorem \ref{spc} is more difficult than Theorem \ref{fpc}. In particular, in the simplest case when $\la=1$, $\mu$ is an arbitrary weight and $m=1$ this result says that
the implication $b\in BMO\Leftrightarrow \|T_b^1f\|_{L^p}\lesssim \|f\|_{L^p(\mu)}$ holds if and only if $\mu\sim 1$. Even in such a simple form this result seems to be new.

\subsection{Auxiliary propositions}
We first recall several standard properties of $A_p$ weights (see, e.g., \cite[Ch. 9]{G1}):
\begin{enumerate}
\renewcommand{\labelenumi}{(\roman{enumi})}
\item
if $w\in A_p$, then $w$ is a doubling weight;
\item if $w\in A_p$, then $w\in A_{\infty}$, which means that there exist $c,\rho>0$ such that for every cube $Q$ and any subset $E\subset Q$,
\begin{equation}\label{ainf}
w(E)\le c\left(\frac{|E|}{|Q|}\right)^{\rho}w(Q);
\end{equation}
\item if $w\in A_p$, then for every cube $Q$ and any subset $E\subset Q$,
\begin{equation}\label{conv}
|E|\le [w]_{A_p}^{1/p}\left(\frac{w(E)}{w(Q)}\right)^{1/p}|Q|,
\end{equation}
where
$$[w]_{A_p}=\sup_Q\left(\frac{1}{|Q|}\int_Qw\right)\left(\frac{1}{|Q|}\int_Qw^{-\frac{1}{p-1}}\right)^{p-1};$$
\item if $\la\in A_p$, then there exists $\e>0$ such that $\la^{1+\e}\in A_p$.
\end{enumerate}

\begin{prop}\label{neccon}
Let $\la$ and $\mu$ be arbitrary weights such that (\ref{cond1con}) holds. Then for every ball $B=B(y_0,r)$,
there is a disjoint ball $\tilde B=B(x_0,r)$ at distance $\text{dist}(B,\tilde B)\simeq r$ such that for any $f\ge 0$
\begin{equation}\label{appr1}
(f_B)^p(\eta^{pm}\la)(\tilde B)\lesssim \int_Bf^p\mu.
\end{equation}
\end{prop}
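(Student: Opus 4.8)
The plan is to run the testing scheme from the proof of Theorem~\ref{neccond}, but with a test function tailored to $BMO_{\eta}$ rather than $BMO$. The point is that the obvious candidate, $b=\eta\,\chi_{\tilde B}$, turns out to be automatically admissible.

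First I would fix $B=B(y_0,r)$ and apply Proposition~\ref{h} with a constant $A\ge 3$ to be chosen, obtaining a disjoint ball $\tilde B=B(x_0,r)$ with $\text{dist}(B,\tilde B)\simeq Ar\simeq r$, with $|K(x_0,y_0)|\simeq (A^nr^n)^{-1}$ and $|K(x,y)-K(x_0,y_0)|\lesssim \varepsilon_A(A^nr^n)^{-1}$ for $x\in\tilde B$, $y\in B$, where $\varepsilon_A\to0$. Taking $A$ large enough that $\varepsilon_A$ is small compared with the constant in $|K(x_0,y_0)|\simeq(A^nr^n)^{-1}$, and using that all the values $K(x,y)$ are then close to the single nonzero number $K(x_0,y_0)$ while $f\ge0$, we get for every $f\ge 0$ supported in $B$ and every $x\in\tilde B$
$$\Big|\int_B K(x,y)f(y)\,dy\Big|\ge |K(x_0,y_0)|\int_B f-\int_B|K(x,y)-K(x_0,y_0)|f\gtrsim \frac{1}{A^nr^n}\int_B f\gtrsim f_B .$$

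Next I would take $b=\eta\,\chi_{\tilde B}$. Since $0\le b\le\eta$ pointwise, for every cube $Q$ one has $\int_Q|b-b_Q|\le 2\int_Q b\le 2\int_Q\eta=2\eta(Q)$, hence $\|b\|_{BMO_{\eta}}\le 2$ and (\ref{cond1con}) applies to this $b$. Because $B\cap\tilde B=\emptyset$, $b$ vanishes on $B$, so for $f\ge 0$ supported in $B$ and $x\in\tilde B$ the integral representation of $T_b^m$ collapses to $T_b^mf(x)=\eta(x)^m\int_B K(x,y)f(y)\,dy$; combined with the previous estimate this gives $|T_b^mf(x)|\gtrsim \eta(x)^m f_B$ for $x\in\tilde B$. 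Integrating against $\lambda$ over $\tilde B$ yields $\|T_b^mf\|_{L^p(\lambda)}^p\gtrsim (f_B)^p(\eta^{pm}\lambda)(\tilde B)$, while (\ref{cond1con}) gives $\|T_b^mf\|_{L^p(\lambda)}^p\lesssim\|b\|_{BMO_{\eta}}^{pm}\|f\|_{L^p(\mu)}^p\lesssim\int_B f^p\mu$. This proves (\ref{appr1}) for bounded $f$ supported in $B$; since replacing $f$ by $f\chi_B$ changes neither side, and since $f_B$ and $\int_B f^p\mu$ are continuous under the truncations $\min(f,k)\chi_B\uparrow f\chi_B$, a monotone convergence argument extends it to all $f\ge 0$.

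I do not expect a serious obstacle here: the only real observation is that $b=\eta\,\chi_{\tilde B}$ has $\|b\|_{BMO_{\eta}}\le 2$ for free, which is what makes the $BMO_{\eta}$ situation no harder than the classical $BMO$ one (where Theorem~\ref{bmoj} was needed). The remainder is bookkeeping: fixing $A$ so the error term in Proposition~\ref{h} is absorbed, checking convergence of the integrals involved, and dispatching the trivial degenerate cases.
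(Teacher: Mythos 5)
Your proof is correct and is essentially the paper's own argument: the paper also tests (\ref{cond1con}) with $b=\eta$ times the indicator of one of the two balls produced by Proposition~\ref{h}, observes that $\|b\|_{BMO_\eta}\le 2$ trivially, and runs the kernel lower bound from the proof of Theorem~\ref{neccond} (the paper writes $b=\eta\chi_B$, which just swaps the roles of $B$ and $\tilde B$ relative to your $b=\eta\chi_{\tilde B}$; your labelling matches the statement of (\ref{appr1}) as written). The paper leaves the details to the earlier argument leading to (\ref{fla}), whereas you spell them out directly in the strong-type setting; both are fine.
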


\begin{proof}
Let $B$ be an arbitrary ball, and let $\tilde B$ be the corresponding ball from Proposition \ref{h}.
Set $b=\eta\chi_B$. We trivially have that $\|b\|_{BMO_{\eta}}\le 2$. Then, following exactly the same
argument leaded to (\ref{fla}), we obtain (\ref{appr1}).
\end{proof}

\begin{cor}\label{pointmu}
Let $\la$ and $\mu$ be arbitrary weights such that (\ref{cond1con}) holds. Then
$$\la\eta^{pm}\lesssim \mu$$
almost everywhere.
\end{cor}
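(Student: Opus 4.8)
The plan is to derive the pointwise bound $\la\eta^{pm}\lesssim\mu$ from the family of local inequalities~(\ref{appr1}) by letting the ball $B$ shrink around a fixed point and invoking the Lebesgue differentiation theorem. Concretely, fix a Lebesgue point $x$ of both $\mu$ and $\la\eta^{pm}$ (the complement of such points has measure zero). For each small $r>0$ apply Proposition~\ref{neccon} to $B=B(x,r)$, choosing the test function $f=\chi_B$ in~(\ref{appr1}). This yields
$$
(\eta^{pm}\la)(\tilde B)\lesssim \int_B\mu=\mu(B),
$$
since $f_B=1$ when $f=\chi_B$.

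The key point is that $\tilde B=B(x_0,r)$ has the \emph{same radius} $r$ as $B$ and sits at distance $\simeq r$ from $B$, so $\tilde B\subset CB:=B(x,Cr)$ for a dimensional constant $C$; also $|\tilde B|\simeq|B|$. Hence, dividing both sides of the displayed inequality by $|\tilde B|\simeq|B|$,
$$
\frac{1}{|\tilde B|}\int_{\tilde B}\eta^{pm}\la\ \lesssim\ \frac{\mu(B)}{|B|}.
$$
Now I would like to let $r\to0$. The right-hand side tends to $\mu(x)$ by the Lebesgue differentiation theorem at $x$. For the left-hand side the subtlety is that $\tilde B$ is not centered at $x$, but $\tilde B\subset CB$, so
$$
\frac{1}{|\tilde B|}\int_{\tilde B}\eta^{pm}\la\ \ge\ \frac{1}{|\tilde B|}\Big(\int_{CB}\eta^{pm}\la-\int_{CB\setminus\tilde B}\eta^{pm}\la\Big),
$$
and this does not immediately converge to $(\eta^{pm}\la)(x)$ pointwise. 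To get around this, I would instead average over the center: observe that $x_0=x_0(r)$ ranges over a set of directions, and that as $r\to 0$ the averages $\frac1{|B(z,r)|}\int_{B(z,r)}\eta^{pm}\la$ converge to $(\eta^{pm}\la)(x)$ \emph{uniformly for $z$ in a fixed small neighborhood of $x$} along a sequence of radii — more cleanly, since $B(x_0,r)\supset B(x, r')$ would fail, one uses instead that $B(x_0,2Cr)\supset B(x_0,r)=\tilde B$ and $B(x_0,2Cr)\ni x_0$, and one picks $r$ along a sequence so that $x_0(r)\to x$; then $\frac1{|\tilde B|}\int_{\tilde B}\eta^{pm}\la\ge c\,\frac1{|B(x_0,2Cr)|}\int_{B(x_0,2Cr)}\eta^{pm}\la\to c\,(\eta^{pm}\la)(x)$ whenever $x$ is also a Lebesgue point and the $x_0(r)\to x$. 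Combining the two limits gives $(\eta^{pm}\la)(x)\lesssim\mu(x)$ at every common Lebesgue point, which is almost every point.

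The step I expect to be the main obstacle is precisely this decoupling of the off-center ball $\tilde B$ from the reference point $x$: since Proposition~\ref{neccon} gives no control over the \emph{direction} in which $\tilde B$ lies, one cannot simply quote the Lebesgue differentiation theorem for $\eta^{pm}\la$ at $x$. The cleanest fix is to work with the (non-centered) maximal function: from $(\eta^{pm}\la)(\tilde B)\lesssim\mu(B)$ and $\tilde B\subset CB$, $|\tilde B|\simeq|B|$, one deduces $\big(M(\eta^{pm}\la\,\chi)\big)$-type lower bounds, but more simply one notes that $x$ lies in $C'\tilde B$ for a slightly larger dilate $C'$, so $\frac{1}{|C'\tilde B|}\int_{C'\tilde B}\eta^{pm}\la$ is a legitimate non-centered average at $x$, and these converge to $(\eta^{pm}\la)(x)$ as $r\to0$ for a.e.\ $x$ by the (non-centered) Lebesgue differentiation theorem, while remaining $\gtrsim\frac{1}{|\tilde B|}\int_{\tilde B}\eta^{pm}\la\gtrsim\frac{(\eta^{pm}\la)(\tilde B)}{|B|}\gtrsim\frac{\mu(B)}{|B|}\to\mu(x)$. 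Everything else — the choice $f=\chi_B$, the volume comparisons $|\tilde B|\simeq|B|$, and the inclusions among dilated balls — is routine.
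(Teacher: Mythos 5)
Your starting point coincides with the paper's one-line proof: take $f=\chi_B$ (equivalently $f=1$) in (\ref{appr1}) to get $(\eta^{pm}\lambda)(\tilde B)\lesssim\mu(B)$, and then differentiate; and you are right to flag that $\tilde B$ is disjoint from $B$ and its direction is uncontrolled, which is the one point the paper leaves implicit. However, both of your proposed resolutions of that point are broken, and in the same way: your comparisons between the average over $\tilde B$ and averages over larger balls containing $x$ go in the wrong direction. Writing $w=\eta^{pm}\lambda\ge 0$, one only has $\frac{1}{|B(x_0,2Cr)|}\int_{B(x_0,2Cr)}w\ \ge\ (2C)^{-n}\frac{1}{|\tilde B|}\int_{\tilde B}w$; the reverse bound $\frac{1}{|\tilde B|}\int_{\tilde B}w\ge c\,\frac{1}{|B(x_0,2Cr)|}\int_{B(x_0,2Cr)}w$ that you assert is false in general, since the mass of $w$ in the dilate may sit entirely outside $\tilde B$ (for instance near $x$ itself). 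The same defect appears in your final chain, where moreover the step $\frac{(\eta^{pm}\lambda)(\tilde B)}{|B|}\gtrsim\frac{\mu(B)}{|B|}$ is backwards: (\ref{appr1}) gives $\lesssim$. After correcting the directions, what you actually have is that the average over $C'\tilde B$ (which tends to $w(x)$) is $\gtrsim$ the average over $\tilde B$, while the average over $\tilde B$ is $\lesssim\frac{\mu(B)}{|B|}$; this yields no pointwise conclusion, because what is needed is a \emph{lower} bound for the average over $\tilde B$ by (essentially) $w(x)$, and the dilation trick cannot produce it — (\ref{appr1}) gives no control of $w$ on any set containing $x$. (Also, averages $\frac{1}{|B(z,r)|}\int_{B(z,r)}w$ do not converge to $w(x)$ uniformly in the center $z$, so your first suggestion fails as stated.)

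The correct repair — and what the authors mean by ``applying the Lebesgue differentiation theorem'' — is to use Lebesgue points in the strong sense: for a.e.\ $x$, $\frac{1}{|B(x,\rho)|}\int_{B(x,\rho)}|w(y)-w(x)|\,dy\to 0$ as $\rho\to 0$. Taking $B=B(x,r)$, the ball $\tilde B$ from Proposition \ref{neccon} satisfies $\tilde B\subset B(x,Cr)$ with $C$ fixed and $|\tilde B|\simeq|B(x,Cr)|$, so
\[
\Big|\frac{1}{|\tilde B|}\int_{\tilde B}w-w(x)\Big|\le\frac{1}{|\tilde B|}\int_{B(x,Cr)}|w-w(x)|\lesssim\frac{1}{|B(x,Cr)|}\int_{B(x,Cr)}|w-w(x)|\longrightarrow 0 .
\]
Thus the average of $w$ over $\tilde B$ converges to $w(x)$ even though $x\notin\tilde B$; the point is that $\tilde B$ occupies a fixed proportion of a ball centered at $x$. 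Combining this with $\frac{1}{|\tilde B|}\int_{\tilde B}w\simeq\frac{w(\tilde B)}{|B|}\lesssim\frac{\mu(B)}{|B|}\to\mu(x)$ at Lebesgue points of $\mu$ gives $w(x)\lesssim\mu(x)$ almost everywhere, which is Corollary \ref{pointmu}. So your choice of test function and overall route are exactly the paper's; the gap is solely in how you handle the off-center ball, and the strong-Lebesgue-point argument above is the standard way to close it.
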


\begin{proof}
This follows immediately by taking $f=1$ in (\ref{appr1}) and applying the Lebesgue differentiation theorem.
\end{proof}

\begin{cor}\label{apc} Assume that $\la$ and $\mu$ satisfy the conditions of Theorem \ref{spc}. Then $(\la,\mu)\in A_p$.
\end{cor}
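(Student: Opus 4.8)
The plan is to deduce the corollary from the necessary condition established in Section~\ref{sec:necCond}. Write $\si=\mu^{1-p'}$, so that $(\la,\mu)\in A_p$ means precisely $\sup_Q\la_Q\,\si_Q^{p-1}<\infty$, where $\la_Q=\frac1{|Q|}\int_Q\la$ and $\si_Q=\frac1{|Q|}\int_Q\si$. In both cases (i) and (ii) of Theorem~\ref{spc} the weight $\la$ is doubling (for $A_p$ weights this is standard, and $A_\infty$ weights are doubling as well). Now (\ref{cond1un}) gives, in particular, the weak-type bound $\|T_b^mf\|_{L^{p,\infty}(\la)}\lesssim\|b\|_{BMO}^m\|f\|_{L^p(\mu)}$ for every $b\in BMO$, so Theorem~\ref{neccond} applies with $(u,v)=(\la,\mu)$ and yields (\ref{necbump}). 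Since the $L^{p'}(\log L)^{mp'}$-Luxemburg norm dominates the $L^{p'}$-norm on every cube, (\ref{necbump}) forces $\sup_Q\|\la^{1/p}\|_{L^p,Q}\|\mu^{-1/p}\|_{L^{p'},Q}<\infty$. Raising this to the power $p$ and using $\|\la^{1/p}\|_{L^p,Q}^p=\la_Q$ together with $\|\mu^{-1/p}\|_{L^{p'},Q}^p=\si_Q^{p-1}$ (recall $p'/p=p'-1=1/(p-1)$), we obtain $\sup_Q\la_Q\,\si_Q^{p-1}<\infty$, i.e. $(\la,\mu)\in A_p$.

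It is worth recording a more self-contained argument, using only the material of the present section. Case (i) is immediate: Corollary~\ref{pointmu} with $\eta=1$ gives $\la\lesssim\mu$ a.e., hence $\si=\mu^{1-p'}\lesssim\la^{1-p'}$ a.e. (the exponent $1-p'$ is negative), so $\la_Q\,\si_Q^{p-1}\lesssim\la_Q(\la^{1-p'})_Q^{p-1}\le[\la]_{A_p}<\infty$. For case (ii) I would use Proposition~\ref{neccon} directly. Fix a ball $B$ with its partner $\tilde B$, set $\si_M=\min(\si,M)$, and take $f=\si_M\chi_B$ in (\ref{appr1}) (with $\eta=1$). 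Since $(1-p')(p-1)+1=0$, one has $\si_M^p\mu\le\si_M\,\si^{p-1}\mu=\si_M$, so $\int_Bf^p\mu\le\int_B\si_M=|B|\,(\si_M)_B$; as $|B|=|\tilde B|$, (\ref{appr1}) becomes $(\si_M)_B^{p-1}\,\la_{\tilde B}\lesssim1$. Letting $M\to\infty$ by monotone convergence gives $\si_B^{p-1}\,\la_{\tilde B}\lesssim1$ (in particular $\la_{\tilde B}>0$ forces $\si$ to be locally integrable). Finally, enclosing $B\cup\tilde B$ in a ball $\hat B$ of radius comparable to that of $B$ and invoking the doubling property of $\la$ gives $\la_B\lesssim\la_{\hat B}\lesssim\la_{\tilde B}$, whence $\la_B\,\si_B^{p-1}\lesssim1$ uniformly over all balls $B$, which is again $(\la,\mu)\in A_p$.

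Neither route involves a genuine difficulty. The one structural point to stress is that Proposition~\ref{neccon} (equivalently, the construction behind Theorem~\ref{neccond}) controls $\la$-averages over the displaced ball $\tilde B$ rather than over $B$ itself, and it is precisely in order to transfer $\la_{\tilde B}$ back to $\la_B$ that the doubling of $\la$---built into both hypotheses of Theorem~\ref{spc}---is invoked. A minor technical wrinkle is the truncation $\si_M$, needed so that $f$ is admissible in (\ref{cond1un}) without assuming a priori that $\mu^{1-p'}$ is locally integrable; that local integrability then falls out of the estimate.
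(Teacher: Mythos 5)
Your proposal is correct, and in fact your ``self-contained'' second route is exactly the paper's proof: the paper deduces Corollary~\ref{apc} in one line by taking $f=\mu^{1-p'}$ in (\ref{appr1}) (with $\eta=1$), which gives $(\si_B)^{p-1}\la_{\tilde B}\lesssim 1$ because $\si^{p}\mu=\si$, and then transferring $\la_{\tilde B}$ to $\la_B$ by the doubling of $\la$ (available in both cases (i) and (ii)). Your truncation $\si_M=\min(\si,M)$ followed by monotone convergence is a sensible technical refinement that the paper glosses over, since $\mu^{1-p'}$ is not assumed locally integrable a priori; your case (i) shortcut via Corollary~\ref{pointmu} is also fine. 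Your primary route, through Theorem~\ref{neccond}, is a genuinely different (and legitimate) derivation: the hypotheses of Theorem~\ref{spc} do supply the weak-type bound with the $\|b\|_{BMO}^m$ dependence and the doubling of $u=\la$, and discarding the logarithmic bump in (\ref{necbump}) yields the joint $A_p$ condition. What that route buys is a stronger conclusion (the bumped condition with $L^{p'}(\log L)^{mp'}$ on the $\mu$ side), but at the cost of invoking the full machinery of Section~\ref{sec:necCond} (the Jones-type extension Theorem~\ref{bmoj} and the logarithm of the maximal function), whereas the paper's argument needs only Proposition~\ref{neccon}, i.e.\ the single test function $b=\chi_B$, which is why it is stated as an immediate corollary.
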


\begin{proof}
This follows at once from (\ref{appr1}) (with $\eta=1$) by taking $f=\mu^{1-p'}$ and using that $\la$ is doubling.
\end{proof}

\begin{prop}\label{unb}
Let $v$ be a weight such that $v\not\in L^{\infty}$. Then there exists a sequence $\a_k\uparrow \infty$ with the following property:
for every sequence $0<\d_k<1$ there exist pairwise disjoint cubes $Q_k$ and subsets $E_k\subset Q_k$ with $|E_k|\ge \d_k|Q_k|$ such that $\a_k<v\le 2\a_k$ on $E_k$.
\end{prop}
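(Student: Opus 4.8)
The plan is to prove Proposition \ref{unb} directly from the definition of $v\not\in L^\infty$, producing the dyadic scales $\a_k$ by a greedy selection and then extracting the cubes at each scale from a Lebesgue-type density argument applied to the super-level sets of $v$.

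First I would fix the sequence of scales. Since $v\not\in L^\infty$, for each $j\in\mathbb N$ the set $\{x:v(x)>2^j\}$ has positive measure; passing to a subsequence, choose integers $j_1<j_2<\dots$ and put $\a_k=2^{j_k}$, so that $\a_k\uparrow\infty$ and moreover the sets $A_k=\{x: \a_k<v\le 2\a_k\}$ have positive (possibly infinite) measure. If some $A_k$ had measure zero we would simply thin out the chosen scales further, using that $\{v>\a_k\}=\bigcup_{i\ge k}A_i\cup\{v>\sup_i 2\a_i\}$ and the last set is null when $\a_k\uparrow\infty$; so without loss of generality $|A_k|>0$ for every $k$, and in fact we may assume $|A_k|>0$ with the scales chosen once and for all, independently of the $\d_k$.

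Next, given an arbitrary sequence $0<\d_k<1$, I would produce the cubes one scale at a time. Fix $k$. Since $|A_k|>0$, the Lebesgue density theorem gives a point $x_0$ of density $1$ for $A_k$, hence arbitrarily small cubes $Q$ centered near $x_0$ with $|A_k\cap Q|\ge\d_k|Q|$; set $E_k=A_k\cap Q_k$ for such a cube $Q_k$. On $E_k$ we have $\a_k<v\le 2\a_k$ by construction, and $|E_k|\ge\d_k|Q_k|$. The only remaining point is disjointness of the $Q_k$: since at each step we may take $Q_k$ with side length as small as we wish, we choose them inductively so that $\ell_{Q_k}$ is small enough that $Q_k$ is disjoint from the finitely many already-chosen $Q_1,\dots,Q_{k-1}$ — possible because $x_0$ (depending on $k$) can be taken outside the closed set $\overline{Q_1\cup\dots\cup Q_{k-1}}$, or, if one insists on keeping the density point fixed, by shrinking $Q_k$ until it avoids that compact set. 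This yields pairwise disjoint cubes with the required properties.

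The main obstacle, such as it is, is purely bookkeeping: one must be careful that the scales $\a_k$ are selected \emph{before} the $\d_k$ are revealed (so the statement reads correctly), which forces the reduction to $|A_k|>0$ to be done at the level of scales rather than cubes; and one must arrange the disjointness of the $Q_k$ without disturbing the already-fixed scales, which is handled by the freedom to shrink each $Q_k$. There is no analytic difficulty here — the Lebesgue differentiation theorem and a countable inductive choice suffice.
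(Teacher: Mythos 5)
Your proposal is correct and follows essentially the same route as the paper: pass to a subsequence of dyadic level sets $\O_{j_k}=\{2^{j_k}<v\le 2^{j_k+1}\}$ of positive measure to fix $\a_k=2^{j_k}$ before the $\d_k$ are given, then use the Lebesgue density theorem to select the cubes $Q_k$ and take them small (and inductively placed) to force pairwise disjointness. Your handling of the disjointness step — choosing the density point of the $k$-th level set off the closed union of the finitely many previously chosen cubes and then shrinking — is exactly the ``standard density argument'' the paper invokes, stated at the same (brief) level of detail.
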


\begin{proof}
For $j\in {\mathbb N}$ denote
$$\O_j=\{x:2^j<v\le 2^{j+1}\}.$$
Since $v$ is unbounded, there is a subsequence $j_k\to \infty$ for which the sets $\O_{j_k}$ have positive measure. Denote $\a_k=2^{j_k}$.

By the standard density argument, there exist cubes $Q_k$ such that $|Q_k\cap \O_{j_k}|\ge \d_k|Q_k|$. Moreover, since the sets $\O_{j_k}$ are pairwise
disjoint, the cubes $Q_k$ can be taken pairwise disjoint as well (taking them as small as necessary). Setting $E_k=Q_k\cap \O_{j_k}$ completes the proof.
\end{proof}

\begin{lemma}\label{bmo}
Let $\eta_1,\eta_2$ be the weights such that $\frac{\eta_1}{\eta_2}\not\in L^{\infty}$. Then there exists
$b\in BMO_{\eta_1}\setminus BMO_{\eta_2}$.
\end{lemma}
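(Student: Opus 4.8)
The plan is to build $b$ explicitly as a lacunary-type sum supported on a sequence of disjoint cubes produced by Proposition \ref{unb} applied to the weight $v=\eta_1/\eta_2$. Since $\eta_1/\eta_2\not\in L^\infty$, Proposition \ref{unb} furnishes scalars $\a_k\uparrow\infty$ so that, for \emph{any} prescribed shrinking sequence $0<\d_k<1$, there are pairwise disjoint cubes $Q_k$ and subsets $E_k\subset Q_k$ with $|E_k|\ge\d_k|Q_k|$ and $\a_k<\eta_1/\eta_2\le 2\a_k$ on $E_k$. I would then set
$$
b=\sum_k c_k\,\chi_{E_k}
$$
for a carefully chosen sequence $c_k$, and arrange that $b\in BMO_{\eta_1}$ but $b\notin BMO_{\eta_2}$ by controlling, for each cube, the oscillation average $\frac1{\mu'(R)}\int_R|b-b_R|$ with $\mu'=\eta_1$ (to be small) versus $\mu'=\eta_2$ (to be large on the cubes $Q_k$ themselves).

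First I would handle the $BMO_{\eta_2}$ side. Testing the $BMO_{\eta_2}$ condition on $R=Q_k$: since $b$ is (essentially) $c_k$ on a large portion $E_k$ of $Q_k$ and $0$ elsewhere in $Q_k$, a standard computation gives $\frac1{\eta_2(Q_k)}\int_{Q_k}|b-b_{Q_k}|\gtrsim c_k\frac{|E_k|}{|Q_k|}\cdot\frac{|Q_k|}{\eta_2(Q_k)}$ — up to issues with $\eta_2(Q_k)$ versus $|Q_k|$, which one circumvents by instead estimating $\frac1{\eta_2(Q_k)}\int_{Q_k}|b-b_{Q_k}|\gtrsim \frac{c_k}{\eta_2(Q_k)}\int_{E_k}(1-\text{fraction}) $ and then using $\int_{E_k}\eta_2\le \frac1{\a_k}\int_{E_k}\eta_1\le\frac1{\a_k}\eta_1(Q_k)$ to relate the two weights. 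The point is that on $E_k$ one has $\eta_2\le\eta_1/\a_k$, so $\eta_2(Q_k)$ is forced to be relatively small compared to $\eta_1(Q_k)$; choosing $c_k\to\infty$ (even $c_k=1$ may suffice if $\a_k$ grows fast enough, but taking $c_k\uparrow\infty$ is safe) makes these averages unbounded, so $b\notin BMO_{\eta_2}$.

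Next, the $BMO_{\eta_1}$ side: I must show $\sup_R\frac1{\eta_1(R)}\int_R|b-b_R|<\infty$. Because the $Q_k$ are pairwise disjoint and the $\d_k$ are at our disposal, the strategy is to make the cubes extremely ``sparse'' — choose the $\d_k$, and if necessary pass to a further subsequence shrinking each $Q_k$, so that for any cube $R$ the set $\{k: Q_k\cap R\neq\emptyset,\ \ell_{Q_k}\le\ell_R\}$ contributes a geometrically summable total $\sum c_k|Q_k|$, controlled by $\e\,\eta_1(R)$ — this uses that one can force $|Q_k|/\eta_1(Q_k)$ and the $c_k|Q_k|$ to decay as fast as we wish relative to the ``parent'' data, a Whitney/lacunarity argument in the spirit of the $BMO$ constructions of \cite{J} and \cite{CR}. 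For cubes $R$ much smaller than the unique $Q_{k}$ containing them, $b$ is locally constant ($=c_k$ or $=0$), so the oscillation vanishes; the only nontrivial case is $R$ comparable to or slightly larger than some $Q_k$, handled by the summability just described.

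The main obstacle I anticipate is the interaction between the two normalizations $\eta_1(R)$ and $|R|$: Proposition \ref{unb} controls the \emph{Lebesgue} measure fraction $|E_k|/|Q_k|$, whereas both $BMO_{\eta_i}$ averages are normalized by $\eta_i(Q_k)$, and \emph{a priori} we have no upper bound $\eta_1(Q_k)\lesssim|Q_k|$ nor lower bound. The resolution is to note we only need the ratio $\eta_2(Q_k)/\eta_1(Q_k)$ to be small, which follows purely from $\eta_2\le\eta_1/\a_k$ on $E_k$ together with a lower bound for the fraction $\eta_1(E_k)/\eta_1(Q_k)$ — and \emph{that} we can also arrange, since shrinking $Q_k$ around a Lebesgue point of $\eta_1$ on $\O_{j_k}$ lets us take $\eta_1(E_k)\ge\frac12\eta_1(Q_k)$ as well. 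Once this ``double density'' refinement of Proposition \ref{unb} is in place, both sides of the argument go through routinely, and the lemma follows.
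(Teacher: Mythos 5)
Your overall skeleton (apply Proposition \ref{unb} to $v=\eta_1/\eta_2$, build $b=\sum_k c_k\chi_{\text{(subset of }Q_k)}$ with growing amplitudes, test the $BMO_{\eta_2}$ condition on the cubes themselves) is the right one, but both crucial steps have genuine gaps. For the $BMO_{\eta_1}$ membership, your lacunarity/sparseness strategy cannot work for general weights: the set $E_k$ is just a measurable piece of a level set of $\eta_1/\eta_2$, so $b=c_k\chi_{E_k}$ is \emph{not} locally constant on small subcubes $R\subset Q_k$; such an $R$ typically meets both $E_k$ and its complement in positive measure, giving $\int_R|b-b_R|\sim c_k|R|$, while $\eta_1(R)$ can be arbitrarily small relative to $|R|$ (nothing in the hypotheses prevents this). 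No choice of $\d_k$ or shrinking of the $Q_k$ repairs this, because the problem occurs \emph{inside} a single $Q_k$. The device that makes the $BMO_{\eta_1}$ side work --- and which the paper uses --- is to calibrate the amplitude so that $0\le b\le\eta_1$ pointwise, after which $\frac{1}{\eta_1(Q)}\int_Q|b-b_Q|\le\frac{2}{\eta_1(Q)}\int_Qb\le 2$ for \emph{every} cube, with no geometric argument at all. To achieve $b\le\eta_1$ one must restrict the support to a sub-level set $E_k'=\{x\in E_k:\b_k<\eta_2\le2\b_k\}$ of $\eta_2$ and take $c_k=\a_k\b_k$, since then $\eta_1>\a_k\eta_2>\a_k\b_k$ on the support.

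The $BMO_{\eta_2}$ failure is also not established. Your key claim that ``$\eta_2(Q_k)$ is forced to be relatively small compared to $\eta_1(Q_k)$'' does not follow from $\eta_2\le\eta_1/\a_k$ on $E_k$: this controls $\eta_2(E_k)$, not $\eta_2(Q_k\setminus E_k)$, and the latter can dominate. Your proposed ``double density'' fix (taking $\eta_1$-density points so that $\eta_1(E_k)\ge\frac12\eta_1(Q_k)$) addresses the wrong weight: what is needed is an upper bound on $\eta_2(R)$ for the test cube $R$ in terms of $|R|$ and the local scale of $\eta_2$. The paper obtains this by choosing $R_k$ centered at a density point of $E_k'$ with respect to the measure $\eta_2\,dx$, so that $\eta_2(R_k)\le2\eta_2(R_k\cap E_k')\le4\b_k|R_k|$; combined with the amplitude $\a_k\b_k$ and $|A_k|=\frac12|R_k|$ this yields $\frac{1}{\eta_2(R_k)}\int_{R_k}|b-b_{R_k}|\ge\frac{1}{16}\a_k\to\infty$. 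In short, the missing idea is that the amplitude $c_k$ must be tied to the local sizes of the weights (via the extra level-set decomposition of $\eta_2$), which simultaneously trivializes the $BMO_{\eta_1}$ bound and quantifies the $BMO_{\eta_2}$ blow-up; as written, neither half of your argument closes.
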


\begin{proof} Apply Proposition \ref{unb} to the weight $v=\frac{\eta_1}{\eta_2}$ with $\d_k=\frac{1}{2}$.
We obtain that there exists an unbounded sequence $\alpha_{k}$
and there are sequences of pairwise disjoint cubes $Q_{k}$ and measurable
subsets $E_{k}\subset Q_{k}$ such that $|E_{k}|\ge\frac{1}{2}|Q_{k}|$
and $\a_k<\frac{\eta_1}{\eta_2}\le 2\a_k$ on~$E_k$.

Next, there exists $i_k\in {\mathbb Z}$ such that the set
$$E_k'=\{x\in E_k:2^{i_k}<\eta_2(x)\le 2^{i_k+1}\}$$
has positive measure. Denote $\b_k=2^{i_k}$.

Let $x_k$ be the density point of $E_k'$ and the Lebesque point of $\eta_2$. Then there exists a cube $R_k$ centered at $x_k$ and containing in $Q_k$ such that
$\eta_2(R_k\cap E_k')\ge \frac{1}{2}\eta_2(R_k)$ and $|R_k\cap E_k'|\ge \frac{1}{2}|R_k|$.

Choose $A_{k}\subset R_k\cap E_k'$ such that $|A_k|=\frac{1}{2}|R_k|$, and set
$$b=\sum_{k}\alpha_{k}\b_k\chi_{A_{k}}.$$
Since $b\le \eta_1$, we trivially obtain that $b\in BMO_{\eta_1}$.

Let us show that $b\not\in BMO_{\eta_2}$. Note that $b_{R_k}=\frac{1}{2}\a_k\b_k$. Hence
\begin{eqnarray*}
\frac{1}{\eta_2(R_k)}\int_{R_k}|b(x)-b_{R_k}|dx&\ge&\frac{1}{\eta_2(R_k)}\int_{A_k}|\a_k\b_k-\frac{1}{2}\a_k\b_k|dx\\
&=&\frac{1}{4}\a_k\b_k\frac{|R_k|}{\eta_2(R_k)}.
\end{eqnarray*}
On the other hand,
$$\eta_2(R_k)\le 2\eta_2(R_k\cap E_k')\le 4\b_k|R_k|.$$
Therefore,
$$
\frac{1}{\eta_2(R_k)}\int_{R_k}|b(x)-b_{R_k}|dx\ge \frac{1}{16}\a_k.
$$
This completes the proof since $\a_k$ is unbounded.
\end{proof}

\begin{lemma}\label{ap} Let $\la\in A_p$. Assume that $\mu$ is a weight such that $\la\le\mu$ and $\frac{\mu}{\la}\not\in L^{\infty}$. Then there exists
a weight $u\not\in L^{\infty}$ such that $(\la u, \mu)\in A_p$.
\end{lemma}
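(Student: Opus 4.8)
The plan is to note first that, writing $\nu(Q):=|Q|^{p}\big(\int_{Q}\mu^{1-p'}\big)^{-(p-1)}$, one has
$$[(\lambda u,\mu)]_{A_{p}}=\sup_{Q}\frac{\lambda u(Q)}{\nu(Q)},$$
so that $(\lambda u,\mu)\in A_{p}$ is \emph{equivalent} to the family of estimates $\lambda u(Q)\lesssim\nu(Q)$ over all cubes. Since $\mu\ge\lambda$ gives $\mu^{1-p'}\le\lambda^{1-p'}$, the assumption $\lambda\in A_{p}$ already yields $\lambda(Q)\le[\lambda]_{A_{p}}\nu(Q)$ for every $Q$. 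Hence the task is to ``inflate'' $\lambda$ to $\lambda u$ with $u\notin L^{\infty}$ while keeping $\lambda u(Q)\lesssim\nu(Q)$; the room for this is located precisely where $\mu/\lambda$ is large, since there $\int_{Q}\mu^{1-p'}$ is much smaller than $\int_{Q}\lambda^{1-p'}$, so $\nu(Q)$ is much larger than $\lambda(Q)$.

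To produce such a $u$ I would apply Proposition~\ref{unb} to $v=\mu/\lambda$ with $\delta_{k}\uparrow 1$, obtaining $\alpha_{k}\uparrow\infty$, pairwise disjoint cubes $Q_{k}$ and sets $E_{k}\subset Q_{k}$ with $|E_{k}|\ge\delta_{k}|Q_{k}|$ on which $\mu>\alpha_{k}\lambda$. Since $\lambda^{1-p'}\in A_{p'}\subset A_{\infty}$, fix the constants $c,\rho$ of (\ref{ainf}) for $w=\lambda^{1-p'}$ and put $\theta_{k}=(c^{-1}\alpha_{k}^{1-p'})^{1/\rho}\in(0,1)$ (true once $\alpha_{k}$ is large, which we may assume). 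By Lebesgue differentiation, for a.e.\ $x\in E_{k}$ there is a scale $r_{k}(x)>0$ such that $|Q'\setminus E_{k}|\le\theta_{k}|Q'|$ whenever $x\in Q'$ and $\ell_{Q'}\le r_{k}(x)$; choose $r_{k}^{*}>0$ for which $G_{k}:=\{x\in E_{k}\cap\tfrac12 Q_{k}:r_{k}(x)\ge r_{k}^{*}\}$ has positive measure (possible because $\delta_{k}$ is close to $1$). Finally choose $A_{k}\subset G_{k}$ with $|A_{k}|>0$ and so small that
$$\alpha_{k}\lambda(A_{k})\le[\lambda]_{A_{p}}^{-1}\Big(\tfrac{r_{k}^{*}}{\ell_{Q_{k}}}\Big)^{np}\lambda(Q_{k}),\qquad \alpha_{k}\lambda(A_{k})\le\lambda(Q_{k}),\qquad \alpha_{k}|A_{k}|\le 2^{-k},$$
which is possible since $\lambda>0$ a.e.; set $u=1+\sum_{k}\alpha_{k}\chi_{A_{k}}$. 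Then $u$ is a weight (the last requirement makes it locally integrable), $u\notin L^{\infty}$ because $\alpha_{k}\to\infty$ and $|A_{k}|>0$, and $\lambda u\ge\lambda$.

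To finish I would verify $\lambda u(Q)\lesssim\nu(Q)$; as $\lambda(Q)\le[\lambda]_{A_{p}}\nu(Q)$, it remains to bound $\sum_{k}\alpha_{k}\lambda(A_{k}\cap Q)$. For indices with $A_{k}\cap Q\neq\emptyset$ but $Q\not\subset Q_{k}$, the containment $A_{k}\subset\tfrac12 Q_{k}$ forces $\ell_{Q_{k}}\le 4\sqrt{n}\,\ell_{Q}$, hence $Q_{k}\subset C_{n}Q$ (a dimensional dilate); using $\alpha_{k}\lambda(A_{k})\le\lambda(Q_{k})$, disjointness of the $Q_{k}$ and doubling of $\lambda$ gives $\sum_{k:\,Q\not\subset Q_{k}}\alpha_{k}\lambda(A_{k}\cap Q)\le\lambda(C_{n}Q)\lesssim\lambda(Q)\lesssim\nu(Q)$. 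By disjointness there is at most one index $k_{0}$ with $Q\subset Q_{k_{0}}$, and for that one I argue by dichotomy. If $\int_{Q}\mu^{1-p'}\le2\alpha_{k_{0}}^{1-p'}\int_{Q}\lambda^{1-p'}$ then, using $-(1-p')(p-1)=1$ and $\lambda\in A_{p}$, $\nu(Q)\gtrsim\alpha_{k_{0}}|Q|^{p}\big(\int_{Q}\lambda^{1-p'}\big)^{-(p-1)}\gtrsim\alpha_{k_{0}}\lambda(Q)/[\lambda]_{A_{p}}$, so $\alpha_{k_{0}}\lambda(A_{k_{0}}\cap Q)\le\alpha_{k_{0}}\lambda(Q)\lesssim\nu(Q)$. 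Otherwise, since $\mu>\alpha_{k_{0}}\lambda$ on $E_{k_{0}}$ gives $\int_{Q\cap E_{k_{0}}}\mu^{1-p'}\le\alpha_{k_{0}}^{1-p'}\int_{Q}\lambda^{1-p'}$, we obtain $\int_{Q\setminus E_{k_{0}}}\lambda^{1-p'}>\alpha_{k_{0}}^{1-p'}\int_{Q}\lambda^{1-p'}$, so (\ref{ainf}) applied to $\lambda^{1-p'}$ forces $|Q\setminus E_{k_{0}}|>\theta_{k_{0}}|Q|$; picking $x\in A_{k_{0}}\cap Q$ (which has $r_{k_{0}}(x)\ge r_{k_{0}}^{*}$) this rules out $\ell_{Q}\le r_{k_{0}}^{*}$, and then (\ref{conv}) applied to $\lambda$ yields $\lambda(Q)\ge[\lambda]_{A_{p}}^{-1}(|Q|/|Q_{k_{0}}|)^{p}\lambda(Q_{k_{0}})>[\lambda]_{A_{p}}^{-1}(r_{k_{0}}^{*}/\ell_{Q_{k_{0}}})^{np}\lambda(Q_{k_{0}})\ge\alpha_{k_{0}}\lambda(A_{k_{0}})$, so again $\alpha_{k_{0}}\lambda(A_{k_{0}}\cap Q)\le\lambda(Q)\lesssim\nu(Q)$. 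Adding the two contributions gives $\lambda u(Q)\lesssim\nu(Q)$, i.e.\ $(\lambda u,\mu)\in A_{p}$.

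I expect the hard part to be exactly this last sub-case: cubes $Q$ that meet $A_{k_{0}}$ yet dip a non-negligible way into $E_{k_{0}}^{\,c}$, where the extra room afforded by $\mu/\lambda$ is lost. The auxiliary scale $r_{k}^{*}$ together with the corresponding shrinking of $A_{k}$ is designed precisely so that such cubes cannot be too small, which then makes the crude bound $\alpha_{k_{0}}\lambda(A_{k_{0}})\le\lambda(Q)$ available. The remaining points --- the elementary estimate $\ell_{Q_{k}}\le4\sqrt{n}\,\ell_{Q}$ when $A_{k}\subset\tfrac12 Q_{k}$ and $Q\not\subset Q_{k}$, and the compatibility of all the smallness demands on $A_{k}$ --- are routine.
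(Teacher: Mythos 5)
Your argument is correct, and it shares the paper's skeleton (apply Proposition \ref{unb} to $\mu/\la$, bump $\la$ by $\a_k$ on small sets $A_k\subset\frac12 Q_k$, then verify the joint $A_p$ condition by cases on the position of $Q$ relative to the $Q_k$), but the mechanism in the decisive step is genuinely different. The paper chooses $\d_k$ \emph{depending on} $\a_k$ and removes a bad set $B_k$ defined via the local weighted maximal operator $M_{\sigma_\la,Q_k}\chi_{G_k}$ (with $\sigma_\la=\la^{1-p'}$, $G_k=Q_k\setminus E_k$), so that \emph{every} cube $Q\subset Q_k$ meeting $A_k$ automatically satisfies $\sigma_\la(Q\cap G_k)\le\a_k^{-1/(p-1)}\sigma_\la(Q)$; then $\frac1{|Q|}\int_Q\mu^{-\frac{1}{p-1}}\le 2\a_k^{-\frac{1}{p-1}}\frac1{|Q|}\int_Q\la^{-\frac{1}{p-1}}$ and the factor $\a_k$ in $\la u$ is cancelled exactly. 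You instead keep $\d_k$ essentially absolute (only $\d_k>1-2^{-n}$ so that $E_k\cap\frac12Q_k$ has positive measure), and in the bad regime, where $\int_{Q\setminus E_{k_0}}\la^{1-p'}$ is a large fraction of $\int_Q\la^{1-p'}$, you use the $A_\infty$ property (\ref{ainf}) of $\la^{1-p'}$ together with Lebesgue density to force $\ell_Q>r^*_{k_0}$, after which your smallness condition $\a_k\la(A_k)\le[\la]_{A_p}^{-1}(r_k^*/\ell_{Q_k})^{np}\la(Q_k)$ combined with (\ref{conv}) gives $\a_{k_0}\la(A_{k_0})\le\la(Q)$, so the bump costs only another $\la(Q)\lesssim\nu(Q)$. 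Both routes are sound; the paper front-loads the difficulty into the construction of $A_k$ (and needs the summability $\sum_k\a_k\ga_k^{\rho}<\infty$ for cubes meeting several $Q_k$), while you defer it to the verification and handle the many-cube case more simply, via disjointness of the $Q_k$ and $\a_k\la(A_k)\le\la(Q_k)$, plus doubling of $\la$. The only points you leave implicit are routine: measurability of $G_k$ (define it through a restricted or centered maximal function of $\chi_{E_k^c}$, adjusting $\theta_k$ by a factor $2^n$), and the absolute continuity allowing $A_k\subset G_k$ of small positive measure satisfying your three constraints.
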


\begin{proof}
Apply Proposition \ref{unb} to the weight $v=\frac{\mu}{\la}$. We obtain the corresponding sequences $\a_k, \d_k, Q_k$ and $E_k$.
Denote  $G_k=Q_k\setminus E_k$. Then $|G_k|\le (1-\d_k)|Q_k|$. Set also $\si_{\la}=\la^{-\frac{1}{p-1}}$.

Let us show that taking suitable sequence $\d_k$ one can choose the sets $A_k\subset \frac{1}{2}Q_k$ of positive measure and satisfying the following properties:
\begin{enumerate}
\renewcommand{\labelenumi}{(\roman{enumi})}
\item $|A_k|=\ga_k|Q_k|$, where $\sum_{k}\a_k\ga_k^{\rho}<\infty,$ and
$\rho$ is the constant from the $A_{\infty}$ property (\ref{ainf}) with $w=\la$;
\item if $Q\subset Q_k$ and $Q\cap A_k\not=\emptyset$, then
\begin{equation}\label{prsi}
\si_{\la}(Q\cap G_k)\le\a_k^{-\frac{1}{p-1}}\si_{\la}(Q).
\end{equation}
\end{enumerate}

Define the weighted local maximal operator
$$M_{\si_{\la},Q_k}f(x)=\sup_{Q\ni x, Q\subset Q_k}\frac{1}{\si_{\la}(Q)}\int_Q|f|\si_{\la},$$
and consider the sets
$$B_k=\{x\in Q_k:M_{\si_{\la},Q_k}\chi_{G_k}(x)>\a_k^{-\frac{1}{p-1}}\}.$$

Observe that $\si_{\la}\in A_{p'}$, and therefore it is a doubling weight. Thus, by the weighted weak type estimate for $M_{\si_{\la}}$ along with (\ref{ainf}),
$$\si_{\la}(B_k)\le c_{n,\la}\a_k^{\frac{1}{p-1}}\si_{\la}(G_k)\le c_{n,\la}'\a_k^{\frac{1}{p-1}}(1-\d_k)^{\e}\si_{\la}(Q_k).$$
Hence, by (\ref{conv}),
\begin{eqnarray*}
|B_k|&\le& [\si_{\la}]_{A_{p'}}^{1/p'}\Big(c_{n,\la}'\a_k^{\frac{1}{p-1}}(1-\d_k)^{\e}\Big)^{1/p'}|Q_k|\\
&=&(c_{n,\la}')^{1/p'}(\a_k[w]_{A_p})^{1/p}(1-\d_k)^{\e/p'}|Q_k|.
\end{eqnarray*}
Take now $\d_k$ such that $(c_{n,\la}')^{1/p'}(\a_k[w]_{A_p})^{1/p}(1-\d_k)^{\e/p'}=\frac{1}{2^{n+1}}$.

We have that $|B_k|\le \frac{1}{2^{n+1}}|Q_k|$. Take $\ga_k$ such that $\sum_{k}\a_k\ga_k^{\rho}<~\infty$ and $\ga_k<\frac{1}{2^{n+1}}$ for all $k$.
Then there exists $A_k\subset \frac{1}{2}Q_k$ such that $|A_k|=\ga_k|Q_k|$ and $A_k\cap B_k=\emptyset$. We have that property (i) is satisfied, and
property~(ii) holds as well: if $Q\subset Q_k$ and $Q\cap A_k\not=\emptyset$, then $Q\cap (Q_k\setminus B_k)\not=\emptyset$, and hence, by the definition of $B_k$,
(\ref{prsi}) holds.

Taking the sets $A_k$ that satisfy properties (i) and (ii), define
$$u=\sum_k\a_k\chi_{A_k}+\chi_{{\mathbb R}^n\setminus \cup_kA_k}.$$
Let us show that $(\la u, \mu)\in A_p$.

Denote
$$F(Q)=\left(\frac{1}{|Q|}\int_Q\la u\right)\left(\frac{1}{|Q|}\int_Q\mu^{-\frac{1}{p-1}}\right)^{p-1}.$$

Assume that $Q$ is not contained in any $Q_k$ and $Q\cap \frac{1}{2}Q_k=\emptyset$ for every~$k$. Then
$$F(Q)=\left(\frac{1}{|Q|}\int_Q\la\right)\left(\frac{1}{|Q|}\int_Q\mu^{-\frac{1}{p-1}}\right)^{p-1}\le [\la]_{A_p}.$$

Assume that $Q$ is not contained in any $Q_k$ and the set
$${\mathcal K}=\{k:Q\not\subset Q_k, Q\cap \frac{1}{2}Q_k\not=\emptyset\}$$
is not empty. Then $Q_k\subset 7Q$ for every $k\in {\mathcal K}$. We obtain
\begin{equation}\label{st}
\int_Q\la u\le \sum_{k\in {\mathcal K}}\a_k\int_{A_k}\la+\int_Q\la.
\end{equation}
Applying (\ref{ainf}) along with the doubling property of $\la$ and (i) yields
\begin{eqnarray*}
&&\sum_{k\in {\mathcal K}}\a_k\int_{A_k}\la \le c\sum_{k\in {\mathcal K}}\a_k\ga_k^{\rho}\int_{Q_k}\la\\
&&\le c\left(\sum_{k}\a_k\ga_k^{\rho}\right)\int_{7Q}\la\le c'\int_Q\la.
\end{eqnarray*}
Combining this with (\ref{st}), we obtain $\int_Q\la u\le c\int_Q\la$, which implies
$$F(Q)\le c[\la]_{A_p}.$$

It remains to consider the case when there exists $k$ such that $Q\subset Q_k$.
Observe that
\begin{equation}\label{ob1}
\frac{1}{|Q|}\int_Q\la u\le \frac{1}{|Q|}\int_Q\la+\left(\frac{1}{|Q|}\int_{Q\cap A_k}\la\right)\a_k
\end{equation}
and
\begin{equation}\label{ob2}
\frac{1}{|Q|}\int_Q\mu^{-\frac{1}{p-1}}\le\a_k^{-\frac{1}{p-1}}\frac{1}{|Q|}\int_Q\la^{-\frac{1}{p-1}}+\frac{1}{|Q|}\int_{Q\cap G_k}\la^{-\frac{1}{p-1}}.
\end{equation}

If
$$\left(\frac{1}{|Q|}\int_{Q\cap A_k}\la\right)\a_k\le \frac{1}{|Q|}\int_Q\la,$$
then, by (\ref{ob1}),
$$F(Q)\le 2[\la]_{A_p}.$$

Assume that
$$\frac{1}{|Q|}\int_Q\la<\left(\frac{1}{|Q|}\int_{Q\cap A_k}\la\right)\a_k.$$
Then $Q\cap A_k\not=\emptyset$. Hence, by property (ii) along with (\ref{ob1}) and (\ref{ob2}),
$$\frac{1}{|Q|}\int_Q\la u\le 2\a_k\frac{1}{|Q|}\int_Q\la \quad\text{and}\quad \frac{1}{|Q|}\int_Q\mu^{-\frac{1}{p-1}}\le2\a_k^{-\frac{1}{p-1}}\frac{1}{|Q|}\int_Q\la^{-\frac{1}{p-1}}.$$
From this,
$$F(Q)\le 2^p[\la]_{A_p},$$
and therefore, the proof is complete.
\end{proof}

Property (iv) of $A_p$ weights mentioned in the beginning of this section along with Lemma \ref{ap} implies the following.

\begin{cor}\label{c}
Let $\la\in A_p$. Assume that $\mu$ is a weight such that $\la\le\mu$ and $\frac{\mu}{\la}\not\in L^{\infty}$. Then there exist $\e>0$ and a weight $u\not\in L^{\infty}$ such that $((\la u)^{1+\e}, \mu^{1+\e})\in A_p$.
\end{cor}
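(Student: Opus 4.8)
The plan is to deduce this from Lemma~\ref{ap} after first raising $\la$ to a slightly larger power, exploiting the self-improvement property (iv) of $A_p$ weights recalled at the start of this section.

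First I would use property~(iv): since $\la\in A_p$, there is $\e>0$ with $\la^{1+\e}\in A_p$; fix this $\e$ once and for all. Next I would check that the pair $(\la^{1+\e},\mu^{1+\e})$ still satisfies the hypotheses of Lemma~\ref{ap}. Indeed, since $t\mapsto t^{1+\e}$ is increasing and $\la\le\mu$, we get $\la^{1+\e}\le\mu^{1+\e}$; and since $\mu/\la\notin L^{\infty}$, also $\mu^{1+\e}/\la^{1+\e}=(\mu/\la)^{1+\e}\notin L^{\infty}$ (using that $\la>0$ a.e., as $\la\in A_p$). Applying Lemma~\ref{ap} with $\la^{1+\e}$ and $\mu^{1+\e}$ in place of $\la$ and $\mu$ then yields a weight $v\notin L^{\infty}$ with $(\la^{1+\e}v,\mu^{1+\e})\in A_p$.

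Finally I would set $u:=v^{1/(1+\e)}$. Then $(\la u)^{1+\e}=\la^{1+\e}v$, so $((\la u)^{1+\e},\mu^{1+\e})\in A_p$; and $u\notin L^{\infty}$ because $v\notin L^{\infty}$ and $t\mapsto t^{1/(1+\e)}$ is unbounded on $[0,\infty)$. This completes the argument. There is essentially no obstacle here: the corollary is a direct consequence of Lemma~\ref{ap} combined with the self-improvement of $A_p$ weights, the only routine point being the elementary verification that raising to the power $1+\e$ preserves both the domination $\la\le\mu$ and the non-membership $\mu/\la\notin L^{\infty}$.
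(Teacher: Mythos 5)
Your proposal is correct and is exactly the argument the paper intends: the paper proves this corollary with the single remark that it follows from property (iv) together with Lemma~\ref{ap}, and your write-up is the natural instantiation of that remark (self-improve $\la$ to $\la^{1+\e}\in A_p$, check that the hypotheses of Lemma~\ref{ap} pass to the pair $(\la^{1+\e},\mu^{1+\e})$, and rescale the resulting weight by the power $1/(1+\e)$). No issues.
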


Finally, an important role in our proofs will be played by the following result of C. Neugebauer \cite{N}.

\begin{theorem}\label{N} Let $\la$ and $\mu$ be the weights such that $(\la^r,\mu^r)\in A_p, p>1,$ for some $r>0$. Then there exists a weight $w\in A_p$
such that
$$\la\lesssim w\lesssim \mu$$
almost everywhere.
\end{theorem}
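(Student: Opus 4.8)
The plan is to construct the intermediate weight by hand out of Hardy--Littlewood maximal functions, following C.\ Neugebauer \cite{N}. Two reductions come first: letting cubes shrink to a common Lebesgue point of $\lambda^r$ and $\mu^r$ in the defining inequality of $(\lambda^r,\mu^r)\in A_p$ gives $\lambda\lesssim\mu$ almost everywhere; and we may restrict attention to the case $r>1$ (this is the only case needed below, cf.\ Corollary~\ref{c}). The machinery for producing $A_p$ weights is the Coifman--Rochberg theorem --- for $0\le\delta<1$ and $0\le h\in L^1_{\mathrm{loc}}$ with $Mh<\infty$ a.e.\ one has $(Mh)^\delta\in A_1$ with constant depending only on $n$ and $\delta$, see \cite{CR} --- combined with the Jones factorization $A_p=A_1\cdot A_1^{1-p}$. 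In particular, since $1/r<1$, the two weights
$$
w_\lambda:=\big(M(\lambda^r)\big)^{1/r}\in A_1,\qquad w_\mu:=\big(M(\mu^{-r/(p-1)})\big)^{-(p-1)/r}\in A_p
$$
are both legitimate, and because $Mh\ge h$ they already satisfy one half of each sandwich: $w_\lambda\ge\lambda$ and $w_\mu\le\mu$.

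Neither $w_\lambda\le\mu$ nor $w_\mu\ge\lambda$ need hold, so the key step is to interpolate between the two using the couple hypothesis, which for every cube $Q$ reads
$$
\Big(\frac{1}{|Q|}\int_Q\lambda^r\Big)\Big(\frac{1}{|Q|}\int_Q\mu^{-r/(p-1)}\Big)^{p-1}\le K,\qquad\text{hence}\qquad \lambda_Q\le(\lambda^r)_Q^{1/r}\le K^{1/r}\big((\mu^{-r/(p-1)})_Q\big)^{-(p-1)/r}.
$$
The candidate for $w$ is obtained by gluing $w_\lambda$ and $w_\mu$ along a stopping-time decomposition of cubes into those on which the $\lambda^r$-average is the binding quantity and those on which the $\mu^{-r/(p-1)}$-average is --- equivalently, by truncating $w_\lambda$ from above by a fixed multiple of a maximal-function surrogate of $\mu$. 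One then verifies three things: (i) $\lambda\le w$, which on the ``$\mu$-dominated'' part comes from the displayed inequality forcing $w_\mu\gtrsim\lambda$ there; (ii) $w\le\mu$, which on the ``$\lambda$-dominated'' part comes from the same inequality raised to the power $1/r$; and (iii) $w\in A_p$, which follows from $w_\lambda,w_\mu\in A_p$ once one knows that the averages over an arbitrary cube $Q$ of $w$ and of $w^{-1/(p-1)}$ are controlled, simultaneously, by the corresponding averages of $w_\lambda$ and $w_\mu$. Here a Kolmogorov-type localized estimate, $\frac{1}{|Q|}\int_Q\big(M(h\chi_Q)\big)^{s}\lesssim_s\big(\frac{1}{|Q|}\int_Qh\big)^{s}$ for $0<s<1$, is what makes the averages of the maximal-function factors comparable to plain averages, and it is again usable precisely because $1/r<1$.

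The main obstacle is step (iii) together with the fine-tuning of the gluing in (i)--(ii): one must show that the ``upward'' part of the maximal functions --- the contribution of cubes much larger than $Q$ --- does not spoil these comparisons, and it is exactly there that the strict bump $r>1$ is indispensable, supplying both the slack in Jensen's inequality $\lambda_Q\le(\lambda^r)_Q^{1/r}$ and the exponent $1/r<1$ needed for Coifman--Rochberg and for the localized maximal estimate. Since the construction most naturally produces a weight in the dyadic class $A_p^{\mathscr D}$ attached to a single lattice, I would finish by running it over the $3^n$ standard shifted dyadic lattices and combining the outputs, obtaining $w\in A_p$ with $\lambda\lesssim w\lesssim\mu$ almost everywhere.
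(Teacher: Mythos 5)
First, a point of reference: the paper does not prove Theorem~\ref{N} at all --- it is imported as a black box from Neugebauer's paper \cite{N} --- so there is no in-paper argument to compare yours against, and your write-up has to stand on its own as a proof. It does not. Your preliminaries are all correct and standard (the reduction $\lambda\lesssim\mu$ a.e.\ via Lebesgue differentiation, the restriction to $r>1$, Coifman--Rochberg, Jones factorization, the Kolmogorov-type localized estimate, and the observation that $w_\lambda=(M(\lambda^r))^{1/r}\ge\lambda$ lies in $A_1$ while $w_\mu=(M(\mu^{-r/(p-1)}))^{-(p-1)/r}\le\mu$ lies in $A_p$). But the theorem is not the assembly of these ingredients; it is the construction of a \emph{single} weight that is simultaneously in $A_p$, above $\lambda$, and below $\mu$, and that weight is never defined. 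You describe it as ``gluing $w_\lambda$ and $w_\mu$ along a stopping-time decomposition'' or ``equivalently, truncating $w_\lambda$ from above,'' list the three properties to be checked, and then state that verifying them is ``the main obstacle.'' That obstacle is the entire content of the theorem.

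Moreover, the one concrete reading of your gluing that I can extract does fail. If the truncation means $w=\min(w_\lambda,Cw_\mu)$, then the lower bound $w\gtrsim\lambda$ breaks: it would force $w_\mu\gtrsim\lambda$, i.e.\ $M(\mu^{-r/(p-1)})\lesssim\lambda^{-r/(p-1)}$, which is an $A_1$-type condition the hypothesis does not supply. Already in the one-weight case $\lambda=\mu=|x|^{-1/4}$ on $\mathbb{R}$ with $p=r=2$ (so $(\lambda^2,\mu^2)=(|x|^{-1/2},|x|^{-1/2})\in A_2$), one has $w_\mu(x)=\big(M(|y|^{1/2})(x)\big)^{-1/2}\simeq 1$ near the origin while $\lambda(x)\to\infty$ there. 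If instead the truncation is by $C(M(\mu^r))^{1/r}$, the lower bound is restored but the upper bound $w\lesssim\mu$ is lost. The obstruction you correctly identify --- that the maximal functions take a supremum over \emph{all} cubes containing $x$, whereas the couple condition only relates averages over the \emph{same} cube --- is exactly what Neugebauer's argument has to overcome, and no combination of $\min$, $\max$, or geometric interpolation of $w_\lambda$ and $w_\mu$ gets around it for free. As written, this is a plan rather than a proof; either cite \cite{N} as the paper does, or supply Neugebauer's actual construction and the verification of the $A_p$ property for it.
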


\subsection{Proofs of Theorems \ref{fpc} and \ref{spc}}
\begin{proof}[Proof of Theorem \ref{fpc}]
By Corollary \ref{pointmu}, it remains to prove that
$$\mu\lesssim \la\eta^{pm}$$
almost everywhere. Assume that this is not true. Define $\tilde\eta=(\mu/\la)^{1/mp}$. Then $\tilde \eta/\eta\not\in L^{\infty}$.

In order to get a contradiction, it suffices to show that
\begin{equation}\label{contr}
\|A_{\mathcal S}(A^m_{\mathcal S,\tilde \eta}f)\|_{L^p(\la)}\lesssim \|f\|_{L^p(\mu)},
\end{equation}
where $A_{\mathcal S,\tilde\eta}^m$ is the $m$-th iteration of $A_{\mathcal S,\tilde\eta}f=\tilde\eta A_{\mathcal S}f$.
Indeed, from this, by Lemma~\ref{dual1}, we obtain that for $b\in BMO_{\tilde\eta}$,
$$\|T_b^mf\|_{L^p(\la)}\lesssim \|b\|_{BMO_{\tilde\eta}}^m\|f\|_{L^p(\mu)},$$
which, by Lemma \ref{bmo}, contradicts (\ref{cond2con}).

To show (\ref{contr}), we will use the well-known fact that $A_S$ is bounded on $L^p(w)$ for $w\in A_p$ (see, e.g.,\cite{CMP}).
Also, by H\"older's inequality,
$$\la\tilde\eta^{kp}=\la^{1-\frac{k}{m}}\mu^{\frac{k}{m}}\in A_p \quad(k=0,\dots,m).$$
Hence,
\begin{eqnarray*}
\|A_{\mathcal S}(A^m_{\mathcal S,\tilde \eta}f)\|_{L^p(\la)}&\lesssim& \|A^m_{\mathcal S,\tilde \eta}f\|_{L^p(\la)}\lesssim \|A^{m-1}_{\mathcal S,\tilde \eta}f\|_{L^p(\la\tilde\eta^p)}\\
&\lesssim &\dots\lesssim\|A_{\mathcal S}f\|_{L^p(\mu)}\lesssim \|f\|_{L^p(\mu)},
\end{eqnarray*}
proving (\ref{contr}).
\end{proof}

The proof of Theorem \ref{spc} is similar but now Lemma \ref{ap} along with Theorem~\ref{N} will play the crucial role.

\begin{proof}[Proof of Theorem \ref{spc}]
By Corollary \ref{pointmu}, $\la\lesssim \mu$ a.e., and therefore it remains to prove the converse estimate. Assume that this is not true. As in the previous proof, it suffices to show that
there exists a weight $u\not\in L^{\infty}$ such that
\begin{equation}\label{ansuf}
\|A_{\mathcal S}(A^m_{\mathcal S,u}f)\|_{L^p(\la)}\lesssim \|f\|_{L^p(\mu)}.
\end{equation}

Assume first that $\la\in A_p$ and $\mu$ is an arbitrary weight. Corollary \ref{c} along with Theorem \ref{N} shows that there exist $u\not\in L^{\infty}$ and $w\in A_p$ such that
$$\la u^{mp}\lesssim w\lesssim \mu.$$
It follows from this that
$$\la u^p\lesssim \la^{1-\frac{1}{m}}w^{\frac{1}{m}}.$$
Also, by H\"older's inequality, $\la^{1-\frac{1}{m}}w^{\frac{1}{m}}\in A_p$. Hence,
\begin{eqnarray*}
\|A_{\mathcal S}(A^m_{\mathcal S,u}f)\|_{L^p(\la)}&\lesssim& \|A^m_{\mathcal S,u}f\|_{L^p(\la)}=\|A_{\mathcal S}(uA^{m-1}_{\mathcal S,u}f)\|_{L^p(\la u^p)}\\
&\lesssim &\|A_{\mathcal S}(uA^{m-1}_{\mathcal S,u}f)\|_{L^p(\la^{1-\frac{1}{m}}w^{\frac{1}{m}})}\lesssim \|A_{\mathcal S, u}^{m-1}f\|_{L^p(\la^{1-\frac{1}{m}}w^{\frac{1}{m}}u^p)}.
\end{eqnarray*}

Arguing similarly, we have that
$$\la^{1-\frac{k-1}{m}}w^{\frac{k-1}{m}}u^p\lesssim \la^{1-\frac{k}{m}}w^{\frac{k}{m}}$$
and $\la^{1-\frac{k}{m}}w^{\frac{k}{m}}\in A_p$ for all $2\le k\le m$.
Therefore, iterating this argument yields
\begin{eqnarray*}
&&\|A_{\mathcal S, u}^{m-1}f\|_{L^p(\la^{1-\frac{1}{m}}w^{\frac{1}{m}}u^p)}\lesssim \|A_{\mathcal S}(uA^{m-2}_{\mathcal S,u}f)\|_{L^p(\la^{1-\frac{2}{m}}w^{\frac{2}{m}})}\\
&&\lesssim \|A_{\mathcal S, u}^{m-2}f\|_{L^p(\la^{1-\frac{2}{m}}w^{\frac{2}{m}}u^p)}\lesssim\dots\lesssim \|A_{\mathcal S}f\|_{L^p(w)}\lesssim \|f\|_{L^p(w)}\lesssim \|f\|_{L^p(\mu)},
\end{eqnarray*}
which proves (\ref{ansuf}).

Consider now the assumption $\la, \mu^{1-p'}\in A_{\infty}$. Observe that if $\mu\in A_p$, then $\mu^{1-p'}\in A_{p'}$, and then, by duality, the situation is reduced to the previously considered case
(and we even do not need to use that $\la\in A_{\infty}$). Therefore, assume that $\mu\not\in A_p$.

Combining Corollary \ref{apc} and the fact that $\la$ and $\mu^{1-p'}$ satisfy the reverse H\"older inequality, we obtain that there exists $r>1$ such that $(\la^r,\mu^r)\in A_p$. Therefore, by
Theorem \ref{N}, there exists $\nu\in A_p$ such that $\la\lesssim \nu\lesssim \mu$ a.e. Since $\mu\not\in A_p$, we have that $\frac{\mu}{\nu}\not\in L^{\infty}$.
Therefore, we are in position to repeat the previous argument with $\la$ replaced by $\nu$.
This completes the proof.
\end{proof}

\section{Appendix}
D. Cruz-Uribe and K. Moen \cite{CM}
showed that the condition
\begin{equation}\label{crmcond}
\sup_Q \|u^{1/p}\|_{L^p(\log L)^{2p-1+\e},Q}\|v^{-1/p}\|_{L^{p'}(\log L)^{2p'-1+\d},Q}<\infty
\end{equation}
is sufficient for
\begin{equation}\label{tb1}
\|T_b^1f\|_{L^p(u)}\lesssim \|b\|_{BMO}\|f\|_{L^p(v)},
\end{equation}
and also they showed that this result is not true for $\e=\d=0$.

On the other hand, by Theorem \ref{extbctbm},
\begin{equation}\label{sepb1}
\sup_Q \|u^{1/p}\|_{L^p(\log L)^{p-1+\e},Q}\|v^{-1/p}\|_{L^{p'}(\log L)^{2p'-1+\d},Q}<\infty
\end{equation}
and
\begin{equation}\label{sepb2}
\sup_Q \|u^{1/p}\|_{L^p(\log L)^{2p-1+\e},Q}\|v^{-1/p}\|_{L^{p'}(\log L)^{p'-1+\d},Q}<\infty
\end{equation}
provide another sufficient condition for (\ref{tb1}).

It is obvious that condition (\ref{crmcond}) implies (\ref{sepb1}) and (\ref{sepb2}). We give an example showing that
(\ref{sepb1}) and (\ref{sepb2}) are weaker than (\ref{crmcond}), in general.

\begin{theorem}\label{example}
There exist weights $u$ and $v$ on ${\mathbb R}$ such that
\begin{equation}\label{cl1}
\sup_I \|u^{1/p}\|_{L^p(\log L)^{p-\frac{1}{2}},I}\|v^{-1/p}\|_{L^{p'}(\log L)^{2p'-\frac{1}{2}},I}<\infty
\end{equation}
and
\begin{equation}\label{cl2}
\sup_I \|u^{1/p}\|_{L^p(\log L)^{2p-\frac{1}{2}},I}\|v^{-1/p}\|_{L^{p'}(\log L)^{p'-\frac{1}{2}},I}<\infty,
\end{equation}
while
\begin{equation}\label{cl3}
\sup_I \|u^{1/p}\|_{L^p(\log L)^{2p-1},I}\|v^{-1/p}\|_{L^{p'}(\log L)^{2p'-1},I}=\infty.
\end{equation}
\end{theorem}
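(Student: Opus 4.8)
The plan is to produce a single pair $(u,v)$ by scattering along the line a sequence of isolated configurations, one for each $N\in\mathbb N$, taking $u=v\equiv 1$ away from all of them. Fix $\delta_N\downarrow 0$ and a large integer $M=M(p)$ (it will suffice that $M\ge 2$ and $M>1/(p-1)$), and set
\[
A_N:=\frac{1}{\delta_N\,\log^{5/2}(e/\delta_N)}.
\]
The $N$-th configuration lives on a long interval $\widehat I_N$ of length $\delta_N^{-1}$; near its centre I place two intervals $E_N,F_N$ with $|E_N|=|F_N|=\delta_N$ at mutual distance $\simeq 1$, and define $u=A_N$ on $E_N$, $u=\delta_N^{M}$ on $\widehat I_N\setminus E_N$, $v^{-1}=A_N$ on $F_N$, $v^{-1}=\delta_N^{M}$ on $\widehat I_N\setminus F_N$. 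These are honest weights (nonnegative, bounded on bounded sets). The numerology $A_N\delta_N=\log^{-5/2}(e/\delta_N)$ is forced by the computation in the next paragraph.

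Let $J_N$ be the smallest interval containing $E_N\cup F_N$, so $|J_N|\simeq 1$ and on $J_N$ the filler value is negligible, whence $u_{J_N}\simeq A_N\delta_N$. By \eqref{eqlog},
\[
\|u^{1/p}\|_{L^p(\log L)^{\alpha},J_N}\simeq\log^{(\alpha-5/2)/p}(e/\delta_N),\qquad
\|v^{-1/p}\|_{L^{p'}(\log L)^{\beta},J_N}\simeq\log^{(\beta-5/2)/p'}(e/\delta_N).
\]
Hence the $\log$-exponent of the corresponding product on $J_N$ is $\tfrac{\alpha-5/2}{p}+\tfrac{\beta-5/2}{p'}$, which equals $0$ for the exponent pairs $(\alpha,\beta)=(p-\tfrac12,\,2p'-\tfrac12)$ and $(2p-\tfrac12,\,p'-\tfrac12)$ appearing in \eqref{cl1} and \eqref{cl2}, and equals $\tfrac12$ for the pair $(2p-1,\,2p'-1)$ in \eqref{cl3}. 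Taking $I=J_N$ and letting $N\to\infty$ therefore establishes \eqref{cl3}, and shows that the left-hand sides of \eqref{cl1} and \eqref{cl2}, restricted to the intervals $J_N$, stay bounded.

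The substantive part is to upgrade this to the supremum over all intervals $I\subset\mathbb R$. I would run a case analysis. If $I$ misses every $\widehat I_N$, or meets one only inside its filler region or across its boundary, the two Orlicz norms are comparable to constants (or to $\delta_N^{M/p}$, $\delta_N^{M/p'}$) and the product is $\lesssim 1$, indeed $\to 0$. If $I$ meets exactly one spike, say $E_N$, with $\delta_N\le|I|\lesssim\delta_N^{-1}$, then $v^{-1}\equiv\delta_N^{M}$ on $I$, $u_I\simeq|I|^{-1}\log^{-5/2}(e/\delta_N)$, and the product is bounded by $\delta_N^{\,M/p'-1/p}$ times a fixed power of $\log(e/\delta_N)$, hence tends to $0$ since $M>1/(p-1)$. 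If $I$ contains both $E_N$ and $F_N$ with $\delta_N\lesssim|I|\lesssim\delta_N^{-1}$, then the filler keeps $u_I\simeq v^{-1}_I\simeq|I|^{-1}\log^{-5/2}(e/\delta_N)$, so $A_N/u_I\simeq|I|/\delta_N\le\delta_N^{-2}$, and the same exponent identity as above gives a product $\lesssim|I|^{-1}\le 1$ for the exponent pairs of \eqref{cl1} and \eqref{cl2} — here one uses that $|I|\gtrsim 1$ as soon as both spikes are present, which is why $E_N$ and $F_N$ are kept a unit apart. Finally, for $|I|\gtrsim\delta_N^{-1}$ (intervals larger than a whole configuration, possibly meeting several of them) each spike's contribution to the relevant Orlicz norm is at most a fixed power of $\log(e/\delta_N)$ divided by $|I|\ge\delta_N^{-1}$, hence negligible, so the product is governed by the background value $1$ and is $\simeq 1$; placing the $\widehat I_N$ astronomically far apart makes the cumulative spike contribution negligible.

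The main obstacle is not any one of these estimates but the uniformity of the bookkeeping: $\delta_N$, $M$ and the gaps between the $\widehat I_N$ must be chosen so that in every regime the filler, background and cross terms are dominated, and — most delicately — so that on the intermediate-scale intervals (sizes between $\delta_N$ and $\delta_N^{-1}$) the growth of one Orlicz norm is offset exactly by the decay of the other. That cancellation is precisely the vanishing of $\tfrac{\alpha-5/2}{p}+\tfrac{\beta-5/2}{p'}$ for the exponent pairs of \eqref{cl1} and \eqref{cl2}, and is where the choice of the power $5/2$ (equivalently, of $A_N$) enters; it is the tightest point of the argument, in contrast to \eqref{cl3}, where the corresponding exponent is the strictly positive $\tfrac12$.
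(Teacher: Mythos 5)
Your overall strategy --- isolated two--spike configurations, normalized so that the logarithmic exponent $\frac{\alpha}{p}+\frac{\beta}{p'}-\frac52$ vanishes for the exponent pairs of \eqref{cl1}, \eqref{cl2} and equals $\frac12$ for the pair of \eqref{cl3} --- is exactly the mechanism of the paper's construction. However, the key display (the asymptotics of the $v$--norm on $J_N$) is false for $p\neq 2$, and this breaks the argument. The Luxemburg norm $\|v^{-1/p}\|_{L^{p'}(\log L)^{\beta},I}$ is governed by $v^{-p'/p}=v^{1-p'}$, not by $v^{-1}$: with your choice $v^{-1}=A_N$ on $F_N$, $|F_N|=\delta_N$, one gets
\[
\|v^{-1/p}\|_{L^{p'}(\log L)^{\beta},J_N}\simeq\bigl(A_N^{p'-1}\delta_N\bigr)^{1/p'}\log^{\beta/p'}(e/\delta_N)
=\delta_N^{\frac{1}{p'}-\frac{1}{p}}\,\log^{\frac{\beta}{p'}-\frac{5}{2p}}(e/\delta_N),
\]
which agrees with your claimed $\log^{(\beta-5/2)/p'}(e/\delta_N)$ only when $p=2$. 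Consequently the product on $J_N$ carries the extra factor $\delta_N^{1/p'-1/p}$: for $p<2$ it blows up polynomially for \emph{every} exponent pair, so \eqref{cl1} and \eqref{cl2} fail for your weights; for $p>2$ it tends to zero even for the pair $(2p-1,2p'-1)$, so your mechanism for \eqref{cl3} evaporates.

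The fix is to break the symmetry between the two spikes and normalize the $v$--spike at the level of $v^{1-p'}$: arrange $v^{1-p'}=\delta_N^{-1}\log^{-c_v}(e/\delta_N)$ on $F_N$ (equivalently, keep a height of order $\delta_N^{-1}$ but shrink the support of the $v$--spike to length $\delta_N^{1/(p-1)}$ --- this is precisely what the paper's definition $v_I=a\log^{3p}(1/a)$ on $[m+b-a^{1/(p-1)},m+b]$ accomplishes), so that $\int_{J_N}v^{1-p'}$ is a pure power of $\log(1/\delta_N)$; then any choice of the two logarithmic normalizations with $\frac{c_u}{p}+\frac{c_v}{p'}=\frac52$ restores your exponent identity. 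Note that your case analysis would then have to be redone, since it repeatedly invokes the symmetric relation $u_I\simeq (v^{-1})_I$ (e.g.\ in the ``both spikes'' case), and that in the single--spike case the constant value of $v^{-1/p}$ contributes $\delta_N^{M/p}$, not $\delta_N^{M/p'}$ --- harmless, but symptomatic of the same confusion between $v^{-1}$ and $v^{1-p'}$. The remaining differences from the paper (spikes a unit apart rather than inside a short interval of length $\log^{-1/2}(1/a)$, and far--separated configurations handled by an $L^r$ bound there versus your ``astronomical gaps'') are matters of bookkeeping, not substance.
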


\subsection{Auxiliary propositions}
We say that a Young function $\Phi$ is submultiplicative if there exists $\kappa\ge 1$ such that
\begin{equation}\label{sub}
\Phi(ab)\le \kappa\Phi(a)\Phi(b)
\end{equation}
for all $a,b\ge 0$. It is easy to see that the function
$$\f(t)=t\log^{\a}(t+e)\quad(\a\ge 0)$$
is submultiplicative.

In the following propositions we assume that $\Phi$ satisfies (\ref{sub}).

\begin{prop}\label{Lemma:LargerCube} Let $I,J\subset {\mathbb R}$ be the intervals such that $J\subset I$. Then
\[
\|f\|_{\Phi,J}\leq\|f\|_{\Phi,I}\frac{1}{\Phi^{-1}\left(\frac{1}{\kappa}\frac{|J|}{|I|}\right)}.
\]
\end{prop}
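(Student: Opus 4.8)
The plan is to exploit the definition of the Luxemburg norm together with the submultiplicativity of $\Phi$. Recall that
$$\|f\|_{\Phi,J}=\inf\Big\{\lambda>0:\frac{1}{|J|}\int_J\Phi(|f(y)|/\lambda)\,dy\le 1\Big\}.$$
The idea is to take any $\lambda$ admissible for the average over the larger interval $I$ — that is, $\frac{1}{|I|}\int_I\Phi(|f|/\lambda)\le 1$ — and to show that $\lambda/\Phi^{-1}\big(\tfrac{1}{\kappa}\tfrac{|J|}{|I|}\big)$ is admissible over $J$. Write $\mu=\Phi^{-1}\big(\tfrac{1}{\kappa}\tfrac{|J|}{|I|}\big)$, so that $\kappa\,\Phi(\mu)=\tfrac{|J|}{|I|}$. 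By submultiplicativity, for each $y$,
$$\Phi\!\left(\frac{|f(y)|}{\lambda/\mu}\right)=\Phi\!\left(\mu\cdot\frac{|f(y)|}{\lambda}\right)\le\kappa\,\Phi(\mu)\,\Phi\!\left(\frac{|f(y)|}{\lambda}\right).$$

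Integrating this inequality over $J$ and using $J\subset I$ together with $\Phi\ge 0$, I get
$$\frac{1}{|J|}\int_J\Phi\!\left(\frac{|f(y)|}{\lambda/\mu}\right)dy\le\frac{\kappa\,\Phi(\mu)}{|J|}\int_I\Phi\!\left(\frac{|f(y)|}{\lambda}\right)dy=\frac{\kappa\,\Phi(\mu)}{|J|}\cdot|I|\cdot\frac{1}{|I|}\int_I\Phi\!\left(\frac{|f|}{\lambda}\right)\le\frac{\kappa\,\Phi(\mu)\,|I|}{|J|}=1,$$
where the last equality is exactly the choice of $\mu$. Hence $\lambda/\mu$ is admissible for $\|f\|_{\Phi,J}$, so $\|f\|_{\Phi,J}\le\lambda/\mu$. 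Taking the infimum over all admissible $\lambda$ for $I$ yields $\|f\|_{\Phi,J}\le\|f\|_{\Phi,I}/\mu$, which is the claimed bound. One should note that $\tfrac{1}{\kappa}\tfrac{|J|}{|I|}\le 1$ (since $\kappa\ge 1$ and $J\subset I$) so that $\mu=\Phi^{-1}(\tfrac{1}{\kappa}\tfrac{|J|}{|I|})$ is well defined and positive, and if the infimum defining $\|f\|_{\Phi,I}$ is not attained one simply runs the argument with any $\lambda$ slightly larger than $\|f\|_{\Phi,I}$ and lets $\lambda\downarrow\|f\|_{\Phi,I}$.

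There is essentially no serious obstacle here; the only point requiring a small amount of care is the scaling identity $\Phi(\mu\cdot t)\le\kappa\Phi(\mu)\Phi(t)$ being applied with $a=\mu$, $b=|f(y)|/\lambda$, and then the bookkeeping that converts the average over $J$ into the average over $I$ — the factor $|I|/|J|$ produced there is precisely cancelled by $\kappa\Phi(\mu)=|J|/|I|$. The submultiplicativity hypothesis \eqref{sub}, which is satisfied by $\Phi(t)=t\log^{\alpha}(t+e)$ as noted just before the statement, is exactly what makes this cancellation possible.
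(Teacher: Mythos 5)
Your proof is correct and follows essentially the same route as the paper: the submultiplicativity estimate $\Phi(\mu t)\le\kappa\Phi(\mu)\Phi(t)$ with $\mu=\Phi^{-1}\bigl(\frac{1}{\kappa}\frac{|J|}{|I|}\bigr)$, enlarging the integral from $J$ to $I$, and observing that $\kappa\Phi(\mu)=\frac{|J|}{|I|}$ cancels the change of normalization. The only difference is cosmetic (you phrase it via admissible $\lambda$'s and pass to the infimum, while the paper plugs in $\lambda=\|f\|_{\Phi,I}/\mu$ directly), so nothing further is needed.
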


\begin{proof}
By (\ref{sub}),
$$\Phi\left(\frac{|f|}{\lambda}\right)\le \frac{|J|}{|I|}\Phi\left(\frac{|f|}{\lambda\Phi^{-1}\left(\frac{1}{\kappa}\frac{|J|}{|I|}\right)}\right).$$
Using also that $J\subset I$, we obtain
$$
\frac{1}{|J|}\int_{J}\Phi\left(\frac{|f|}{\lambda}\right)\le \frac{1}{|I|}\int_{I}
\Phi\left(\frac{|f|}{\lambda\Phi^{-1}\left(\frac{1}{\kappa}\frac{|J|}{|I|}\right)}\right).
$$
Hence if $\lambda=\|f\|_{\Phi,I}\frac{1}{\Phi^{-1}\left(\frac{1}{\kappa}\frac{|J|}{|I|}\right)}$,
the latter is controlled by $1$, and the desired conclusion follows.
\end{proof}

\begin{prop}\label{Lemma:Supp} Let $I,J\subset {\mathbb R}$ be the intervals
such that $|J\cap I|\not=0$. If $\text{supp }f\subset I$, then
\[
\|f\|_{\Phi,J}\leq\|f\|_{\Phi,J\cap I}\frac{1}{\Phi^{-1}\left(\frac{1}{\kappa}\frac{|J|}{|J\cap I|}\right)}.
\]
\end{prop}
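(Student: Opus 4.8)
The plan is to imitate, almost verbatim, the proof of Proposition \ref{Lemma:LargerCube}, the only new ingredient being that the support hypothesis $\text{supp}\,f\subset I$ lets us replace the domain of integration $J$ by $J\cap I$ in every integral of the form $\int\Phi(|f|/\lambda)$, since $\Phi(0)=0$. Note first that the hypothesis $|J\cap I|\neq 0$ makes all the quantities below meaningful, and that $|J\cap I|\le|J|$.

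First I would record the pointwise estimate coming from submultiplicativity, with $|I|$ in the earlier proof replaced by $|J\cap I|$. Writing $\tfrac{|f|}{\lambda}$ as the product of $\tfrac{|f|}{\lambda\,\Phi^{-1}(\frac1\kappa\frac{|J|}{|J\cap I|})}$ and $\Phi^{-1}(\frac1\kappa\frac{|J|}{|J\cap I|})$ and applying (\ref{sub}) gives
$$\Phi\!\left(\frac{|f|}{\lambda}\right)\le\frac{|J|}{|J\cap I|}\,\Phi\!\left(\frac{|f|}{\lambda\,\Phi^{-1}\!\left(\frac{1}{\kappa}\frac{|J|}{|J\cap I|}\right)}\right).$$

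Next I would integrate this over $J$. Since $\text{supp}\,f\subset I$ and $\Phi(0)=0$, the integrand vanishes off $J\cap I$, so that
$$\frac{1}{|J|}\int_J\Phi\!\left(\frac{|f|}{\lambda}\right)=\frac{1}{|J|}\int_{J\cap I}\Phi\!\left(\frac{|f|}{\lambda}\right)\le\frac{1}{|J\cap I|}\int_{J\cap I}\Phi\!\left(\frac{|f|}{\lambda\,\Phi^{-1}\!\left(\frac{1}{\kappa}\frac{|J|}{|J\cap I|}\right)}\right).$$
Choosing $\lambda=\|f\|_{\Phi,J\cap I}\,\Phi^{-1}\!\left(\frac{1}{\kappa}\frac{|J|}{|J\cap I|}\right)^{-1}$ makes $\lambda\,\Phi^{-1}\!\left(\frac{1}{\kappa}\frac{|J|}{|J\cap I|}\right)=\|f\|_{\Phi,J\cap I}$, so the last right-hand side is at most $1$ by the definition of the normalized Luxemburg norm. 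Hence $\|f\|_{\Phi,J}\le\lambda$, which is exactly the asserted inequality.

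There is essentially no obstacle beyond the bookkeeping already performed in Proposition \ref{Lemma:LargerCube}; the single genuinely new step is the reduction of the integral from $J$ to $J\cap I$, which is immediate from the support hypothesis. The only minor point to keep in mind is that the argument $\tfrac1\kappa\tfrac{|J|}{|J\cap I|}$ of $\Phi^{-1}$ need not lie in $[0,1]$, but for a Young function $\Phi$ (or more generally any continuous strictly increasing function with $\Phi(0)=0$ and $\Phi(t)\to\infty$) the inverse $\Phi^{-1}$ is defined on all of $[0,\infty)$, so this causes no difficulty.
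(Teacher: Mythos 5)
Your argument is correct and coincides with the paper's own proof: the same submultiplicativity estimate with $|J\cap I|$ in place of $|I|$, the reduction of the integral from $J$ to $J\cap I$ via the support hypothesis, and the same choice of $\lambda$. Nothing to add.
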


\begin{proof} The proof is similar to the previous. By (\ref{sub}),
$$\Phi\left(\frac{|f|}{\lambda}\right)\le \frac{|J|}{|J\cap I|}\Phi\left(\frac{|f|}{\lambda\Phi^{-1}\left(\frac{1}{\kappa}\frac{|J|}{|J\cap I|}\right)}\right).$$
Therefore,
$$
\frac{1}{|J|}\int_{J}\Phi\left(\frac{|f|}{\lambda}\right)\le \frac{1}{|J\cap I|}\int_{J\cap I}
\Phi\left(\frac{|f|}{\lambda\Phi^{-1}\left(\frac{1}{\kappa}\frac{|J|}{|J\cap I|}\right)}\right).
$$
Hence if $\lambda=\|f\|_{\Phi,J\cap I}\frac{1}{\Phi^{-1}\left(\frac{1}{\kappa}\frac{|J|}{|J\cap I|}\right)}$,
the latter is controlled by $1$, and the desired conclusion follows.
\end{proof}

\subsection{Localized weights}
Let $0<a<b<1/2$ be the numbers such that
$$b=\log^{-\frac{1}{2}}(1/a).$$
We also assume that $a$ is small enough so that
\begin{equation}\label{conda}
a<(b/2)^{\max(p-1,1)}.
\end{equation}

Let $m$ any real number. We define two functions on the interval
$I=[m,m+b]$ as follows

\begin{equation}
u_{I}(x)=\frac{1}{a}\chi_{[m,m+a]}(x)+a\chi_{[m+b-a,m+b]}(x)\label{def1}
\end{equation}
and
\begin{equation}
v_{I}(x)=\frac{1}{a}\chi_{[m,m+b-a^{\frac{1}{p-1}})}(x)+a\log^{3p}(1/a)\chi_{[m+b-a^{\frac{1}{p-1}},m+b]}(x).\label{def2}
\end{equation}
Observe that, by (\ref{conda}),
$$m+a<\min(m+b-a, m+b-a^{\frac{1}{p-1}}).$$

\begin{prop}\label{calc}
We have
\begin{equation}\label{b1}
\|u_I^{1/p}\|_{L^p(\log L)^{p-\frac{1}{2}},I}\|v_I^{-1/p}\|_{L^{p'}(\log L)^{2p'-\frac{1}{2}},I}\lesssim 1,
\end{equation}
\begin{equation}\label{b2}
\|u_I^{1/p}\|_{L^p(\log L)^{2p-\frac{1}{2}},I}\|v_I^{-1/p}\|_{L^{p'}(\log L)^{p'-\frac{1}{2}},I}\lesssim 1
\end{equation}
and
\begin{equation}\label{b3}
\|u_I^{1/p}\|_{L^p(\log L)^{2p-1},I}\|v_I^{-1/p}\|_{L^{p'}(\log L)^{2p'-1},I}\sim \frac{1}{b}.
\end{equation}
\end{prop}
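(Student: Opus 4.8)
The plan is to reduce all three identities to elementary estimates of $L(\log L)^{\alpha}$ averages. By the scaling identity $\||f|^{1/p}\|_{L^p(\log L)^{\alpha},Q}\simeq\|f\|_{L(\log L)^{\alpha},Q}^{1/p}$ recorded in Section~\ref{sec:Prelim}, together with the fact that $(v_I^{-1/p})^{p'}=v_I^{1-p'}$, it suffices to compute, up to multiplicative constants, the quantities $\|u_I\|_{L(\log L)^{\alpha},I}$ for $\alpha\in\{p-\frac12,\,2p-\frac12,\,2p-1\}$ and $\|v_I^{1-p'}\|_{L(\log L)^{\beta},I}$ for $\beta\in\{2p'-\frac12,\,p'-\frac12,\,2p'-1\}$, and then to multiply the relevant $\frac1p$-th and $\frac1{p'}$-th powers; for these averages I would use the equivalence (\ref{eqlog}). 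Here condition (\ref{conda}) serves only to guarantee that the two intervals in the definition of $u_I$ are disjoint and that the first of them sits inside the large piece of $v_I$, so that each weight takes the stated constant value on the pieces we integrate over.

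First I would handle the $u$-side. Since $|u_I|_I=\frac{1+a^2}{b}\simeq\frac1b$, the ratio $u_I/|u_I|_I$ is $\simeq b/a$ on $[m,m+a]$ and $\simeq ab\le 1$ on $[m+b-a,m+b]$. Because $\log b=-\frac12\log\log(1/a)$ is of lower order than $\log(1/a)$, one has $\log(b/a)\simeq\log(1/a)$; plugging into (\ref{eqlog}), the first interval contributes $\frac1b\log^{\alpha}(1/a)$ while the second contributes only $O(a^2/b)$, which is negligible. Hence $\|u_I\|_{L(\log L)^{\alpha},I}\simeq\frac1b\log^{\alpha}(1/a)$, i.e.\ $\|u_I^{1/p}\|_{L^p(\log L)^{\alpha},I}\simeq b^{-1/p}\log^{\alpha/p}(1/a)$.

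For the $v$-side, $v_I^{1-p'}$ equals $a^{1/(p-1)}$ on $[m,m+b-a^{1/(p-1)})$ and $a^{-1/(p-1)}\log^{-3p'}(1/a)$ on the remaining interval of length $a^{1/(p-1)}$ (using $3p(1-p')=-3p'$). Since $a^{1/(p-1)}\log^{N}(1/a)\to 0$ for every fixed $N$, the small piece dominates the integral, so $|v_I^{1-p'}|_I\simeq\frac1b\log^{-3p'}(1/a)$; on the large interval $v_I^{1-p'}/|v_I^{1-p'}|_I\to 0$, while on the small one it is $\simeq b\,a^{-1/(p-1)}$, whose logarithm is $\simeq\log(1/a)$. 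Invoking (\ref{eqlog}) once more gives $\|v_I^{1-p'}\|_{L(\log L)^{\beta},I}\simeq\frac1b\log^{\beta-3p'}(1/a)$, hence $\|v_I^{-1/p}\|_{L^{p'}(\log L)^{\beta},I}\simeq b^{-1/p'}\log^{(\beta-3p')/p'}(1/a)$.

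It then remains to multiply. Any product with $u$-exponent $\alpha$ and $v$-exponent $\beta$ is $\simeq b^{-1}\log^{\alpha/p+\beta/p'-3}(1/a)$, and substituting $b^{-1}=\log^{1/2}(1/a)$ turns this into $\log^{\frac12+\frac{\alpha}{p}+\frac{\beta}{p'}-3}(1/a)$. For $(\alpha,\beta)=(p-\frac12,2p'-\frac12)$ and for $(\alpha,\beta)=(2p-\frac12,p'-\frac12)$ one computes $\frac{\alpha}{p}+\frac{\beta}{p'}=\frac52$ using $\frac1p+\frac1{p'}=1$, so the exponent is $0$ and the product is $\simeq 1$, which gives (\ref{b1}) and (\ref{b2}); for $(\alpha,\beta)=(2p-1,2p'-1)$ one has $\frac{\alpha}{p}+\frac{\beta}{p'}=3$, so the exponent is $\frac12$ and the product is $\simeq\log^{1/2}(1/a)=\frac1b$, which is (\ref{b3}). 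I do not expect a genuine obstacle: the only steps requiring care are the bookkeeping of which term dominates each average — controlled by (\ref{conda}) and by the smallness of $a$, which makes powers of $a$ beat powers of $\log(1/a)$ — and the elementary arithmetic with the exponents $p$ and $p'$.
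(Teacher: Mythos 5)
Your proposal is correct and follows essentially the same route as the paper's proof: reduce to the $L(\log L)^{\alpha}$ averages of $u_I$ and $v_I^{1-p'}$ via the power-scaling of Luxemburg norms, compute these averages with (\ref{eqlog}) by identifying the dominant piece of each weight (with powers of $a$ beating powers of $\log(1/a)$), and then do the exponent arithmetic using $\frac1p+\frac1{p'}=1$ and $b^{-1}=\log^{1/2}(1/a)$. Your bookkeeping of the dominant terms and the final exponents $0$, $0$, $\frac12$ matches the paper's computation, so no changes are needed.
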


\begin{proof} The proof is based on the observation that for any $\a,\b>0$
$$
\|u_I^{1/p}\|_{L^p(\log L)^{\a},I}\sim \|u_I\|_{L(\log L)^{\a},I}^{1/p}
\quad\text{and}\quad
\|v_I^{-1/p}\|_{L^{p'}(\log L)^{\b},I}\sim \|v_I^{1-p'}\|_{L(\log L)^{\b},I}^{1/p'},
$$
and on a straightforward computation by (\ref{eqlog}).

We have
$$
(u_I)_I=\frac{1}{b}(1+a^2)\quad\text{and}\quad (v_I^{1-p'})_I=\frac{1}{b}\Big(
(b-a^{\frac{p'}{p}})a^{\frac{p'}{p}}+\log^{3p(1-p')}(1/a)\Big).
$$

Therefore,  by (\ref{eqlog}),
\begin{eqnarray*}
&&\|u_I\|_{L(\log L)^{\a},I}\sim \frac{1}{|I|}\int_Iu_I\log^{\a}\left(\frac{u_I}{(u_I)_I}+e\right)\\
&&=\frac{1}{b}\log^{\a}\left(\frac{1}{(u_I)_I} \frac{1}{a}+e\right)+\frac{a^2}{b}\log^{\a}\left(\frac{a}{(u_I)_I}+e\right)\\
&&\sim\frac{1}{b}\log^{\a}\left(\frac{b}{a}+e\right)\sim\frac{1}{b}\log^{\a}(1/a).
\end{eqnarray*}

Similarly,
\begin{eqnarray*}
&&\|v_I^{1-p'}\|_{L(\log L)^{\b},I}\sim \frac{1}{|I|}\int_Iv_I^{1-p'}\log^{\b}\left(\frac{v_I^{1-p'}}{(v_I^{1-p'})_I}+e\right)\\
&&=\frac{1}{b}(b-a^{\frac{1}{p-1}})a^{\frac{1}{p-1}}\log^{\b}\left(\frac{a^{\frac{1}{p-1}}}{(v_I^{1-p'})_I}+e\right)\\
&&+\frac{1}{b}\log^{\ga_2(1-p')}(1/a)
\log^{\b}\left(\frac{a^{1-p'}\log^{3p(1-p')}(1/a)}{(v_I^{1-p'})_I}+e\right)\\
&&\sim
\frac{1}{b}\log^{3p(1-p')}(1/a)
\log^{\b}\left(ba^{1-p'}+e\right)\sim \frac{1}{b}\log^{\b-3p(p'-1)}(1/a).
\end{eqnarray*}

From this,
\begin{eqnarray*}
\|u_I^{1/p}\|_{L^p(\log L)^{\a},I}\|v_I^{-1/p}\|_{L^{p'}(\log L)^{\b},I}&\sim&\frac{1}{b}
\log^{\frac{\a}{p}+\frac{\b-3p(p'-1)}{p'}}(1/a)\\
&=&\frac{1}{b}\log^{\frac{\a}{p}+\frac{\b}{p'}-3}(1/a),
\end{eqnarray*}
which implies the proposition.
\end{proof}

\subsection{Proof of Theorem \ref{example}}
For each $n\ge N$, where $N$ is large enough,  set $a_{n}=e^{-n}$, $b_n=n^{-1/2}$ and
$m_n=e^{n}$. Set $I_{n}=[m_{n},m_{n}+b_{n}]$, and define $u_{n}(x)=u_{I_{n}}(x)$ and $v_{n}(x)=v_{I_{n}}(x)$
as described in the previous section.

Finally, set
\[
u(x)=\sum_{n=N}^{\infty}u_{n}(x)
\]
and
\[
v(x)=\chi_{(-\infty,e^{N})}+\sum_{n=N}^{\infty}v_{n}(x)+\sum_{n=N}^{\infty}e^{n}\chi_{(e^{n}+b_{n},e^{n+1})}(x).
\]

By (\ref{b3}),
$$
\|u^{1/p}\|_{L^p(\log L)^{2p-1},I_n}\|v^{-1/p}\|_{L^{p'}(\log L)^{2p'-1},I_n}\sim n^{1/2},
$$
which proves (\ref{cl3}).

It remains to show that (\ref{cl1}) and (\ref{cl2}) hold, namely that for every interval $J\subset {\mathbb R}$,
\begin{equation}\label{ch1}
\|u^{1/p}\|_{L^p(\log L)^{p-\frac{1}{2}},J}\|v^{-1/p}\|_{L^{p'}(\log L)^{2p'-\frac{1}{2}},J}\lesssim 1
\end{equation}
and
\begin{equation}\label{ch2}
\|u^{1/p}\|_{L^p(\log L)^{2p-\frac{1}{2}},J}\|v^{-1/p}\|_{L^{p'}(\log L)^{p'-\frac{1}{2}},J}\lesssim 1.
\end{equation}

We shall consider several subcases.

\vskip 0.3 cm
{\bf Case 1:} $J\subset I_{n}$ for some $n$. We split $I_{n}$ as
\[
I_{n}=I^{1}\cup I^{2}\cup I^{3},
\]
where
\[
I^{1}=[m,m+a_{n}),\quad I^{2}=[m+a_{n},m+b_{n}-a_{n}^{\frac{1}{p-1}})\quad\text{and}\quad I^{3}=[m+b_{n}-a_{n}^{\frac{1}{p-1}},m+b_{n}].
\]
There are four possible situations:
\begin{enumerate}
\renewcommand{\labelenumi}{(\roman{enumi})}
\item $J\subset I^{i}$ for some $i$.
\begin{itemize}
\item If $J\subset I^{1}$, then, since $u$ and $v$ are constant on $I^1$,
\[
\|u^{1/p}\|_{L^p(\log L)^{p-1/2},J}=\|u^{1/p}\|_{L^p(\log L)^{2p-1/2},J}=a_{n}^{-1/p}
\]
and
\begin{equation}\label{vcons}
\|v^{-1/p}\|_{L^{p'}(\log L)^{2p'-1/2},J}=\|v^{-1/p}\|_{L^{p'}(\log L)^{p'-1/2},J}=a_{n}^{1/p}.
\end{equation}
If $J\subset I^{3},$ then, since $u\le a_n$ on $I^3$,
\begin{equation}\label{an}
\|u^{1/p}\|_{L^p(\log L)^{p-1/2},J}=\|u^{1/p}\|_{L^p(\log L)^{2p-1/2},J}\le a_{n}^{1/p}.
\end{equation}
Also, since $v$ is constant on $I^3$,
\[
\|v^{-1/p}\|_{L^{p'}(\log L)^{2p'-1/2},J}=\|v^{-1/p}\|_{L^{p'}(\log L)^{p'-1/2},J}=(a_n\log^{3p}(1/a_n))^{-1/p}.
\]
In both cases (\ref{ch1}) and (\ref{ch2}) hold.

\item Suppose that $J\subset I^{2}$. If $p'\geq p$ then $u=0$ on $J$. Otherwise $u\le a_n$ on $J$,
and hence, in both cases (\ref{an}) holds.
Also,
$$
\|v^{-1/p}\|_{L^{p'}(\log L)^{2p'-1/2},J}=\|v^{-1/p}\|_{L^{p'}(\log L)^{p'-1/2},J}=a_{n}^{1/p},
$$
and hence we again obtain (\ref{ch1}) and (\ref{ch2}).
\end{itemize}

\vskip 0.3 cm
\item $J\cap I^{i}\not=\emptyset$ for every $i$. Then $|J|\sim |I_n|$. In this case
(\ref{ch1}) and (\ref{ch2}) follow from a combination of Propositions \ref{Lemma:LargerCube} and \ref{calc}.

\vskip 0.3 cm
\item $J\cap I^{1}\not=\emptyset$, $J\cap I^{2}\not=\emptyset$ and $J\cap I^{3}=\emptyset$.
Then (\ref{vcons}) holds, and also, by Proposition \ref{Lemma:Supp},
$$
\max(\|u^{1/p}\|_{L^p(\log L)^{p-1/2},J},\|u^{1/p}\|_{L^p(\log L)^{2p-1/2},J})\lesssim a_{n}^{-1/p},
$$
which implies (\ref{ch1}) and (\ref{ch2}).

\vskip 0.3 cm
\item $J\cap I^{1}=\emptyset$, $J\cap I^{2}\not=\emptyset$ and $J\cap I^{3}\not=\emptyset$.
Then, arguing as above,
$$
\max(\|u^{1/p}\|_{L^p(\log L)^{p-1/2},J},\|u^{1/p}\|_{L^p(\log L)^{2p-1/2},J})\lesssim a_n^{1/p}
$$
and
$$
\max(\|v^{-1/p}\|_{L^{p'}(\log L)^{2p'-1/2},J},\|v^{-1/p}\|_{L^{p'}(\log L)^{p'-1/2},J})\lesssim (a_n\log^{3p}(1/a_n))^{-1/p},
$$
and therefore, (\ref{ch1}) and (\ref{ch2}) hold.
\end{enumerate}

\vskip 0.3cm
{\bf Case 2:} $J\cap I_{n}\protect\not=\emptyset$ for just one $I_{n}$ but $J\protect\not\subset I_{n}$.
In this case, by Proposition \ref{Lemma:Supp},
$$
\|u^{\frac{1}{p}}\|_{L^p(\log L)^{p-1/2},J}\lesssim\|u^{\frac{1}{p}}\|_{L^p(\log L)^{p-1/2},J\cap I_{n}}
$$
and
$$
\|u^{\frac{1}{p}}\|_{L^p(\log L)^{2p-1/2},J}\lesssim\|u^{\frac{1}{p}}\|_{L^p(\log L)^{2p-1/2},J\cap I_{n}}.
$$

On the other hand we note that for any $x,y\in J$ with $x\in J\setminus I_{n}$
and $y\in I_{n}$ $v^{-\frac{1}{p}}(x)\lesssim v^{-\frac{1}{p}}(y)$.
Hence,
$$
\|v^{-1/p}\|_{L^{p'}(\log L)^{2p'-1/2},J}\lesssim \|v^{-1/p}\|_{L^{p'}(\log L)^{2p'-1/2},J\cap I_n}
$$
and
$$
\|v^{-1/p}\|_{L^{p'}(\log L)^{p'-1/2},J}\lesssim \|v^{-1/p}\|_{L^{p'}(\log L)^{p'-1/2},J\cap I_n}.
$$
This reduces us to the previous case and hence we are done.

\vskip 0.3cm
{\bf Case 3:} $J\cap I_{n}\protect\not=\emptyset$ for more than one $I_{n}$. Using that
$$\|f\|_{L(\log L)^{\a},Q}\lesssim \|f\|_{L^r,Q}$$
for $r>1$, it suffices to show that for $r>1$ small enough
$$\|u\|_{L^r,J}\lesssim 1\quad\text{and}\quad \|v^{1-p'}\|_{L^r,J}\lesssim 1.$$

Let $n_{0}$ and $n_1$ be the smallest and the largest integers such that
$J\cap I_{n_{0}}\not=\emptyset$ and $J\cap I_{n_{1}}\not=\emptyset$. If $n_0>N$, then $J\subset [e^N,\infty)$ and
$|J|\sim e^{n_1}$. If $n_0=N$, then one can write $J=L\cup R$, where $L\subset (-\infty, e^N)$ (possibly $L=\emptyset$)
and $R\subset [e^N,\infty)$ with $|R|\sim e^{n_1}$.

Since $u$ is supported in $\cup_{n\ge N}I_n$, we obtain that
$$\int_Ju^2\le \sum_{n=n_0}^{n_1}\Big(e^n+e^{-3n}\Big)\lesssim e^{n_1}\lesssim |J|.$$

Now, if $n_0>N$, then for $r\le p$,
$$\int_Jv^{(1-p')r}\lesssim \sum_{n=n_0-1}^{n_1+1}\Big(e^{nr(1-p')}e^n+\Big(\frac{n^{3p}}{e^n}\Big)^{r(1-p')}e^{-n(p'-1)}\Big)\lesssim e^{n_1}\lesssim |J|.$$
In the case if $n_0=N$ and $J=L\cup R$ represented as above, then
$$\int_Jv^{(1-p')r}=|L|+\int_Rv^{(1-p')r}\lesssim |L|+|R|=|J|.$$
This completes the proof.

\end{document}